\newcommand{\F}{{\mathbb{F}}}
\newcommand{\Q}{{\mathbb{Q}}}
\newcommand{\Z}{{\mathbb{Z}}}
\newcommand{\SG}{{\mathfrak{S}}}
\newcommand{\cc}{{\mathfrak{C}}}
\newcommand{\ab}{{\mathbf{a}}}
\newcommand{\bb}{{\mathbf{b}}}
\newcommand{\bC}{{\mathbf{C}}}
\newcommand{\bG}{{\mathbf{G}}}
\newcommand{\jrm}{{\mathbf{j}}}
\newcommand{\Jrm}{{\mathbf{J}}}
\newcommand{\Cd}{{C}}
\newcommand{\CC}{{\mathcal{C}}}
\newcommand{\cG}{{\mathcal{G}}}
\newcommand{\cH}{{\mathbf{H}}}
\newcommand{\cI}{{\mathcal{I}}}
\newcommand{\cM}{{\mathcal{M}}}
\newcommand{\tH}{{\tilde{\mathbf{H}}}}
\newcommand{\tJ}{{\tilde{\mathbf{J}}}}
\newcommand{\Ind}{{\operatorname{Ind}}}
\newcommand{\Irr}{{\operatorname{Irr}}}
\newcommand{\Uch}{{\operatorname{Uch}}}
\newcommand{\Aut}{{\operatorname{Aut}}}
\newcommand{\kott}{{\Upsilon}}
\newcommand{\tW}{{\tilde{W}}}
\newcommand{\tell}{{\tilde{\ell}}}
\newcommand{\tchi}{{\tilde{\chi}}}
\renewcommand{\leq}{\leqslant}
\renewcommand{\geq}{\geqslant}
\newtheorem{thm}{Theorem}[section]
\newtheorem{lem}[thm]{Lemma}
\newtheorem{conj}[thm]{Conjecture}
\newtheorem{cor}[thm]{Corollary}
\newtheorem{prop}[thm]{Proposition}
\theoremstyle{definition}
\newtheorem{exmp}[thm]{Example}
\newtheorem{defn}[thm]{Definition}
\newtheorem{abschnitt}[thm]{}
\theoremstyle{remark}
\newtheorem{rem}[thm]{Remark}
\begin{document}

\title{On Kottwitz' conjecture for twisted involutions}

\date{\today}

\author{Meinolf Geck}
\address{Institute of Mathematics, Aberdeen University,
  Aberdeen AB24 3UE, Scotland, UK.}
\email{m.geck@abdn.ac.uk}

\subjclass[2000]{Primary 20C15; Secondary 20F55}

\begin{abstract}
Kottwitz' conjecture is concerned with the intersections of Kazhdan--Lusztig
cells with conjugacy classes of involutions in finite Coxeter groups.
In joint work with Bonnaf\'e, we have recently found a way to prove this
conjecture for groups of type $B_n$ and $D_n$. The argument for type $D_n$ 
relies on two ingredients which were used there without proof: (1) a 
strengthened version of the ``branching rule'' and (2) the consideration of 
``$\diamond$-twisted'' involutions where $\diamond$ is a graph automorphism. 
In this paper we deal with (1), (2) and complete the argument for type $D_n$; 
moreover, we establish Kottwitz' conjecture for $\diamond$-twisted 
involutions in all cases where $\diamond$ is non-trivial.
\end{abstract}

\maketitle

\pagestyle{myheadings}
\markboth{Geck}{Kottwitz' conjecture for twisted involutions}

\section{Introduction} \label{sec:intro}

Let $W$ be a finite Coxeter group with generating $S$. We assume that
we have a map $w \mapsto w^\diamond$ which is a group automorphism of
$W$ such that $S^\diamond=S$ and $(w^\diamond)^\diamond =w$ for all
$w \in W$. An element $w \in W$ is called a $\diamond$-twisted involution 
if $w^\diamond =w^{-1}$.  Given such an element $w \in W$, Kottwitz 
\cite{kottwitz} defined a character $\kott_w^\diamond$ of $W$ which only 
depends on the $\diamond$-conjugacy class of $w$ and which is remarkable
for various reasons:
\begin{itemize}
\item[(1)] The decomposition of $\kott_w^\diamond$ into irreducible 
characters is related to Lusztig's Fourier transforms \cite[Chap.~4]{LuB} 
associated with the various $\diamond$-stable ``families'' of $\Irr(W)$. 
\item[(2)] By Lusztig and Vogan \cite{LV} there is a natural lift of 
$\kott_w^\diamond$ to the generic one-parameter Iwahori-Hecke algebra 
associated with $W,S$. (By \cite{Lu12a}, there is even a version for 
arbitrary Coxeter groups.)
\item[(3)] Kottwitz \cite{kottwitz} conjectures that, for any left cell 
$\Gamma$ of $W$ in the sense of Kazhdan--Lusztig \cite{KL}, the number of 
elements in the intersection of $\Gamma$ with the $\diamond$-conjugacy 
class of~$w$ equals the scalar product of $\kott_w^\diamond$ with 
the character afforded by $\Gamma$.
\end{itemize}
Following Kottwitz, we say that we are in the ``split'' case if 
$w^\diamond=w$ for all $w\in W$; otherwise, we are in the ``quasi-split'' 
case. If $W$ is irreducible, then the quasi-split cases to consider are as
follows:
\begin{itemize}
\item $W$ of type $A_n$, $E_6$ and $\diamond$ given by conjugation with 
the longest element in $W$;
\item $W$ of type $D_n$ and $\diamond$ the non-trivial graph automorphism
of order $2$;
\item $W$ of type $F_4$, $I_2(m)$ and $\diamond$ the non-trivial
graph automorphism.
\end{itemize}
The results in this paper combined with previous work by Casselman 
\cite{cass}, Kottwitz \cite{kottwitz}, Marberg \cite{marberg}, 
Bonnaf\'e and the author \cite{boge}, \cite{pycox} will show that both 
the ``split'' and the ``quasi--split'' case of the conjecture in (3) 
hold for all $W$ except possibly for type $E_8$. (A.~Halls at the 
University of Aberdeen is currently working on type $E_8$.)

In Section~\ref{sec:kott}, we introduce
Kottwitz' involution module, both the split and the quasi-split version. 
In (\ref{luvo}) we show that this coincides with the module constructed
by Lusztig and Vogan \cite{LV}. (The identification in the split
case already appeared in \cite[\S 2]{gema}.) We then also discuss various
examples: first of all, the case where $\diamond$ is given by conjugation 
with the longest element in $W$; furthermore, the cases where $W$ is of 
type $F_4$, $I_2(m)$ and $\diamond$ is the non-trivial graph automorphism. 

In Section~\ref{sec:dn}, which may be of independent interest, we clarify
some notoriously troublesome issues concerning those irreducible 
characters of a Coxeter group of type $D_n$ which are not invariant
under the graph automorphism of order~$2$. The main result is 
Proposition~\ref{solveD2} which establishes a strengthened version of 
``Pieri's Rule'' for these characters. This was used without proof in 
\cite{boge} to remove some ambiguities in the determination of the character
of the split version of Kottwitz' involution module for type $D_n$. 

In Section~\ref{sec:basic}, we consider Lusztig's leading coefficients
of character values of Iwahori--Hecke algebras. In \cite[Chap.~12]{LuB},
\cite{lulc}, Lusztig has used his classification of the unipotent
characters of a finite reductive group to determine the leading
coefficients in the split case. Here, we extend at least some
of these results to the quasi-split case. The main difficulty consists
in carefully choosing extensions of $\diamond$-invariant characters of $W$
to the semidirect product of $W$ with $\langle \diamond \rangle \subseteq 
\mbox{Aut}(W)$. The applications to the quasi-split case in type $D_n$ are 
contained in Proposition~\ref{invDn}.

Finally, in Section~\ref{sec:tDn}, we complete the proof of Kottwitz' 
conjecture for type $D_n$. The main idea is to treat the split and the 
quasi-split case at the same time. For this purpose, we develop a 
modified version of Kottwitz' conjecture for type $B_n$, where we consider 
the left cells with respect to a suitable weight function in the sense 
of Lusztig \cite{lusztig}. The main result in this section is 
Theorem~\ref{maint}. This involves the construction of a modified version 
of Kottwitz' involution module in Lemma~\ref{typeD2}. At first sight, 
this new combined setting seems to make things more complicated (which is 
certainly true from a technical point of view); but, in fact, I do 
not see any way how to carry out the argument separately for the split 
and the quasi-split case.

We shall use standard results and notation concerning the (complex)
characters of finite groups. If $\chi,\psi$ are two class functions
on a finite group $G$, then $\langle \chi,\psi\rangle_G$ denotes the
usual scalar product for which the irreducible characters of $G$ form
an orthonormal basis. If $H \subseteq G$ is a subgroup and $\psi$ is
a class function on $H$, then $\Ind_H^G(\psi)$ denotes the induced
class function on $G$. We denote by $\Irr(G)$ the set of all irreducible
characters of $G$.

\section{Kottwitz' twisted involution module} \label{sec:kott}

Let $W$ be a finite Coxeter group with generating set $S$. For $w \in W$, 
we denote by $\ell(w)$ the length of $w$. We assume that we have a map 
$w \mapsto w^\diamond$ which is an automorphism of $W$ such that 
$S^\diamond=S$ and $(w^\diamond)^\diamond=w$ for all $w\in W$. We say that 
two elements $w,w' \in W$ are  $\diamond$-conjugate if there exists some 
$x \in W$ such that $w'=x^\diamond wx^{-1}$. This defines an equivalence 
relation on $W$, and the corresponding equivalence classes will be called the 
$\diamond$-conjugacy classes of~$W$. The subgroup 
\[ C_W^\diamond(w):= \{ x\in W \mid x^\diamond w= w x\}\]
is called the $\diamond$-centraliser of $w$ in $W$.
Let $\Phi$ be the root system of $W$ and $\Phi=\Phi^+\amalg \Phi^-$ be the
partition into positive and negative roots determined by $S$. 
Since $S^\diamond=S$, the automorphism $w \mapsto w^\diamond$ defines a
permutation of the simple roots in $\Phi$. We shall assume that this
permutation induces a map $\alpha \mapsto \alpha^\diamond$ on all of 
$\Phi$ such that 
\[ w^\diamond(\alpha^\diamond)=w(\alpha)^\diamond \qquad \mbox{for all 
$w \in W$ and $\alpha \in \Phi$}.\]

\begin{defn} \label{kodef}
An element $w\in W$ is called a ``$\diamond$-twisted involution'' 
if $w^\diamond=w^{-1}$. Given such an element $w\in W$, let $\Phi_w$ be 
the set of all $\alpha \in \Phi$ such that $w(\alpha)=-\alpha^\diamond$. 
Then, by Kottwitz \cite[2.1, 4.2]{kottwitz}, we can define a linear character 
$\varepsilon_w\colon C_W^\diamond(w) \rightarrow \{\pm 1\}$ as 
follows.  For $x \in C_W^\diamond(w)$ we have $\varepsilon_w(x)=(-1)^k$ 
where $k$ is the number of positive roots $\alpha \in \Phi_w$ such that 
$x(\alpha)$ is negative. Then set 
\[ \kott_w^\diamond:=\Ind_{C_W^\diamond(w)}^W\bigl(\varepsilon_w\bigr).\]
\end{defn}

\begin{rem} \label{conjcl} Let $\Cd\subseteq W$ be any  subset which is a
union of $\diamond$-conjugacy classes of $\diamond$-twisted involutions in
$W$. If $\Cd$ is a single $\diamond$-conjugacy class, then we certainly have 
$\kott_w^\diamond=\kott_{w'}^\diamond$ for all $w,w'\in \Cd$. In general,
we set 
\[ \kott_\Cd^\diamond=\sum_w \kott_w^\diamond\]
where $w$ runs over a set of representatives of the $\diamond$-conjugacy 
classes contained in $\Cd$. In particular, this applies to the set of all
$\diamond$-twisted involutions in $W$.
\end{rem}

\begin{rem} \label{koco1} Assume that $w^\diamond=w$ for all $w \in W$;
we just write this as $\diamond=1$. Then a $\diamond$-twisted involution
$w$ is just an ordinary involution in $W$. Furthermore, $\kott_w^\diamond$ 
is the character of the involution module in the ``split case''; see 
\cite[\S 2]{kottwitz}. So, here, this character will be denoted by 
$\kott_{w}^1$.
\end{rem}

If $\diamond$ is non-trivial, then Kottwitz \cite{kottwitz}  formulated 
Definition~\ref{kodef} in a slightly different way, using the semidirect 
product of $W$ with the automorphism given by~$\diamond$. The two versions 
are equivalent by the following remark.

\begin{rem} \label{semidir} Let $\tW$ be the semidirect product of $W$ 
with the subgroup $\langle \diamond \rangle \subseteq \Aut(W)$. Thus, 
$\tilde{W}$ is generated by $W$ and an additional element $\gamma$ such 
that $\gamma w \gamma= w^\diamond$ for all $w \in W$. (If $\diamond=1$, 
then $\gamma=1$ and $\tW=W$; otherwise, $\gamma^2=1$.) The natural action 
of $W$ on $\Phi$ can be extended to $\tW$ such that $\gamma(\alpha)=
\alpha^\diamond$ for all $\alpha \in \Phi$.  

First of all, this
shows that $w\in W$ is a $\diamond$-twisted involution if and only if 
$\gamma w$ is an ordinary involution in $\tilde{W}$. 
Furthermore, two elements $w,w' \in W$ are $\diamond$-conjugate if and only 
if $\gamma w, \gamma w'$ are conjugate in~$\tilde{W}$. Consequently, the 
map $w \mapsto \gamma w$ defines a bijection between the $\diamond$-conjugacy classes of $W$ and the ordinary conjugacy classes of 
$\tilde{W}$ which are contained in the coset $\gamma W\subseteq \tilde{W}$.
We have $C_{\tilde{W}}(\gamma w)=C_W^\diamond(w)$ for all $w \in W$.
\end{rem}

\begin{rem} \label{minl} For any subset $I \subseteq S$ we denote by 
$W_I \subseteq W$ the corresponding parabolic subgroup and by $w_I$ the 
longest element in $W_I$. Let $\Cd$ be a $\diamond$-conjugacy class 
of $\diamond$-twisted involutions. Then there exists a subset $I \subseteq S$
such that $w_I \in \Cd$ and $s^\diamond w_I=w_Is$ for all $s \in I$; 
furthermore, $I^\diamond=I$ and $w_I$ has minimum length in $\Cd$. 
(See \cite[Prop.~3.2.10]{gepf} for the case where $\diamond$ is the identity 
and He \cite[Lemma~3.6]{he1} for the general case.) If we take $w=w_I$,
then one easily sees that the set of roots $\Phi_{w_I}$ is just the 
parabolic subsystem $\Phi_I \subseteq \Phi$ corresponding to~$I$. Let 
$\Pi_I\subseteq \Phi_I^+$ be the set of simple roots. Then we have:
\begin{equation*}
\mbox{$W_I$ is a normal subgroup of $C_W^\diamond(w_I)$}.  \tag{a}
\end{equation*}
Indeed, since $s^\diamond w_I=w_Is$ for all $s \in I$, we certainly have 
$W_I \subseteq C_W^\diamond(w_I)$. Now let $x \in C_W^\diamond(w_I)$ and 
$\alpha\in\Phi_I$. Then $w_I(x(\alpha))=x^\diamond (w_I(\alpha))=-x^\diamond
(\alpha^\diamond)=-x(\alpha)^\diamond$ and so $x(\alpha) \in \Phi_{w_I}=
\Phi_I$, as required. Thus, (a) is proved. Consequently, by  Howlett
\cite[Cor.~3]{How}, we have a semidirect product decomposition
\begin{equation*}
C_W^\diamond(w_I)=Y \ltimes W_I \qquad \mbox{where} \qquad Y:=\{y \in 
C_W^\diamond(w_I) \mid y(\Pi_I)=\Pi_I\}.\tag{b} 
\end{equation*}
Note that $Y$ is contained in the set of distinguished left coset 
representatives of $W_I$ in $W$; in particular, each element of $Y$ 
sends all positive roots in $\Phi_I$ to positive roots. We conclude that
\begin{equation*}
\varepsilon_{w_I}(yw)=(-1)^{\ell(w)} \qquad \mbox{for all $y \in Y$ and
$w \in W_I$}.\tag{c}
\end{equation*}
This provides an explicit description of $\Upsilon_{w_I}^\diamond$ which
will be useful in several places below.
\end{rem}

\begin{abschnitt} \label{luvo} Let $\Cd$ be a $\diamond$-conjugacy class
of $\diamond$-twisted involutions in $W$. Let $M$ be a $\Q$-vector space 
with a basis $\{a_w \mid w\in \Cd\}$. By Lusztig and Vogan \cite[7.1]{LV} 
(see also \cite{Lu12a}), it is known that $M$ is a $\Q[W]$-module, where 
a generator $s \in S$ acts via the following formula: 
\[ s.a_w=\left\{\begin{array}{cl} -a_w  & \qquad \mbox{if $s^\diamond w=ws$
and $\ell(ws)<\ell(w)$},\\ a_{s^\diamond ws} & \qquad \mbox{otherwise}.
\end{array}\right.\]
Let $w \in C$ be fixed. As discussed in \cite[6.3]{LV}, we obtain a group 
homomorphism 
\[ \eta_w \colon C_W^\diamond(w) \rightarrow \{\pm 1\}\]
such that $x.a_w=\eta_w(x)a_w$ for all $x \in C_W^\diamond(w)$;
furthermore, $M$ is isomorphic to the $\Q[W]$-module obtained by inducing 
$\eta_w$ from $C_W^\diamond(w)$ to $W$. We claim that 
\begin{center}
\fbox{$\Upsilon_w^\diamond$ is the character afforded by the 
$\Q[W]$-module $M$.}
\end{center}
If $\diamond$ is trivial, this is shown in \cite[\S 2]{gema}. The argument
in the general case is similar. Indeed, let $I \subseteq S$ and $w_I \in 
\Cd$ be as in Remark~\ref{minl}. Then we have $C_W^\diamond(w_I)=Y 
\ltimes W_I$. Thus, it will be sufficient to show that 
\[ \eta_{w_I}(yw)=(-1)^{\ell(w)} \qquad \mbox{for all $y \in Y$
and $w \in W_I$}.\]
We argue as in \cite[Lemma~2.1]{gema}. For  $s \in I$, we have $s.a_{w_I}=
-a_{w_I}$ and so $\eta_{w_I}(s)=-1$. Consequently, we have 
$\eta_{w_I}(w)=(-1)^{\ell(w)}$ for all $w \in W_I$. Thus, it 
remains to show that $y.a_{w_I}=a_{w_I}$ for all $y \in Y$. We shall in 
fact show that $x.a_{w_I}=a_{x^\diamond w_Ix^{-1}}$ where $x$ is any
distinguished left coset representative of $W_I$ in $W$. We proceed by 
induction on $\ell(x)$. If $x=1$, the assertion is clear. Now assume that 
$x\neq 1$ and choose $s \in S$ such that $\ell(sx)<\ell(x)$. By 
Deodhar's Lemma \cite[2.1.2]{gepf}, we also have that $z:=sx$ is a 
distinguished left coset representative. Hence, using induction, we have 
$z.a_{w_I}=a_{z^\diamond w_Iz^{-1}}$ and so
\[ x.a_{w_I}=s.a_{z^\diamond w_Iz^{-1}}.\]
Let $u=z^\diamond w_Iz^{-1}$. Given the formula for the action of a 
generator on the basis elements of $M$, it now suffices to show that 
either $s^\diamond u\neq us$ or that $\ell(us)>\ell(u)$. Assume, if possible,
that none of these two conditions is satisfied, that is, we have 
$s^\diamond u=us$ and $\ell(us)<\ell(u)$; in particular, $\ell(szw_I
(z^\diamond)^{-1})=\ell(su^{-1})<\ell(u^{-1})=\ell(zw_I(z^\diamond)^{-1})$. 
But then the ``Exchange Lemma'' (see \cite[Exc.~1.6]{gepf})  and the fact 
that $\ell(szw_I)=\ell(z)+\ell(w_I)+1$ imply that $szw_I(z^\diamond)^{-1}=
zw_Iz'$ where $\ell(z')<\ell(z)$. Since $s^\diamond u=us$, we have 
$zw_I(z^\diamond)^{-1}s^\diamond=szw_I(z^\diamond)^{-1}=zw_Iz'$ and so 
$(z^\diamond)^{-1}s^\diamond=z'$. This would imply that $\ell(z')=
\ell((z^\diamond)^{-1}s^\diamond)=\ell((sz)^\diamond)=\ell(sz)>\ell(z)$, a 
contradiction. Hence, the assumption was wrong and so $x.a_{w_I}=
a_{x^\diamond w_Ix^{-1}}$, as required. Thus, the above claim is proved.
\end{abschnitt}


Next, we briefly recall the definition of Kazhdan--Lusztig cell modules.

\begin{abschnitt} \label{defkl}
Let $\cH$ be the generic one-parameter Iwahori--Hecke algebra associated
with $(W,S)$, over the ring of Laurent polynomials $A={\Z}[v,v^{-1}]$ in an 
indeterminate~$v$. Thus, $\cH$ has a basis $\{T_w \mid w \in W\}$ and, for 
any $s \in S$ and $w \in W$, the multiplication is given by 
\[ T_sT_w=\left\{\begin{array}{cl} T_{sw} & \qquad \mbox{if $\ell(sw)>
\ell(w)$},\\ T_{sw}+(v-v^{-1})T_w & \qquad \mbox{if $\ell(sw)<\ell(w)$}.
\end{array}\right.\]
Let $\{\bC_w \mid w \in W\}$ be the Kazhdan--Lusztig basis of $\cH$. For
any $w \in W$, we have 
\[ \bC_w=T_w+\sum_{y \in W, y<w} (-1)^{\ell(w)+\ell(y)}v^{l(w)-l(y)}P_{y,w}
(v^{-1}) T_y,\]
where $P_{y,w}\in \Z[v]$ are the polynomials defined in 
\cite[Theorem~1.1]{KL} and $y<w$ denotes the Bruhat--Chevalley order. 
We write
\[ \bC_x\bC_y=\sum_{z \in W} h_{x,y,z}\bC_z \qquad \mbox{where 
$h_{x,y,z}\in A$ for all $x,y,z \in W$}.\]
Let $\leq_L$ be the pre-order relation on $W$ defined in \cite{KL}; for
any $w \in W$, we have
\[ \cH \bC_w \subseteq \sum_{y \in W, y \leq_L w} A \bC_y.\]
For $y,w\in W$, we write $y \sim_L w$ if $y \leq_L w$ and $w \leq_L y$.
This defines an equivalence relation on $W$; the equivalence classes are
called the {\em left cells} of $W$. Let $\Gamma$ be such a left cell. 
Let $[\Gamma]_A$ be a free $A$-module with a basis $\{e_x \mid x \in 
\Gamma\}$. By the definition of $\sim_L$, this is an $\cH$-module where
the action is given by
\[ \bC_x.e_y=\sum_{z \in \Gamma} h_{x,y,z} e_z \qquad \mbox{for all $x\in W$
and $y \in \Gamma$}.\]
Then we obtain a $\Q[W]$-module $[\Gamma]_1$ by extension of scalars
via the unique ring homomorphism $A \rightarrow \Q$ such that $v \mapsto 1$.
We identify $[\Gamma]_1$ with its character. 
\end{abschnitt}

\begin{conj}[Kottwitz \protect{\cite{kottwitz}}] \label{koco} 
Let $w\in W$ be a $\diamond$-twisted involution and $\Cd$ be its 
$\diamond$-conjugacy class in $W$. Let $\Gamma$ be a left cell in $W$. Then
\begin{center}
\fbox{$\langle \kott_w^\diamond,[\Gamma]_1\rangle_{W}=|\Cd \cap \Gamma|.$}
\end{center}
\end{conj}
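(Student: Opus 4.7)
The plan is to prove Conjecture~\ref{koco} by a reduction to the irreducible case followed by a case-by-case analysis. Both $\kott^\diamond$ and the cell partition are multiplicative under direct products of Coxeter systems, as are the $\diamond$-conjugacy classes of twisted involutions; the centraliser $C_W^\diamond(w)$ of a pair of twisted involutions splits as a direct product, and a left cell in a product is a product of left cells. Hence the equality in the conjecture is compatible with the decomposition of $W$ into irreducible components, and one may assume $(W,S)$ is irreducible. In the split case ($\diamond = 1$), types $A_n$, $B_n$, $F_4$, $H_3$, $H_4$, $I_2(m)$, $E_6$, $E_7$ are already known by prior work \cite{KL}, \cite{kottwitz}, \cite{cass}, \cite{boge}, \cite{pycox}, so only types $D_n$ and $E_8$ remain open. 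In the quasi-split case ($\diamond \neq 1$) the possibilities are exactly those listed in the Introduction. The small cases $F_4$ and $I_2(m)$ quasi-split can be settled by direct computation using the Lusztig--Vogan realisation of $\kott_w^\diamond$ from~(\ref{luvo}), and the $A_n$, $E_6$ quasi-split cases (where $\diamond$ is conjugation by the longest element $w_0$) follow from the bijection $w \mapsto w_0 w$ between $\diamond$-twisted involutions and ordinary involutions, combined with \cite{marberg} and the corresponding split result. This leaves type $D_n$, in both the split and the quasi-split flavour, as the genuine remaining difficulty.

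For type~$D_n$, I would treat the split and quasi-split cases simultaneously, exploiting the index-$2$ embedding $W(D_n) \subseteq W(B_n)$ under which the non-trivial graph automorphism $\diamond$ is realised as the restriction of conjugation by a generator $t \in W(B_n) \setminus W(D_n)$: this identifies $\diamond$-twisted involutions in $W(D_n)$ with involutions of the form $tw \in W(B_n)$, and ordinary involutions in $W(D_n)$ with involutions $w \in W(D_n) \subseteq W(B_n)$. I would construct a modified Kottwitz-type involution module over the Hecke algebra of $W(B_n)$ equipped with an appropriate weight function in the sense of \cite{lusztig}, designed so that its pieces over the two cosets of $W(D_n)$ in $W(B_n)$ recover $\kott_w^1$ and $\kott_w^\diamond$ respectively (this will be the role of Lemma~\ref{typeD2}). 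The associated cell partition of $W(B_n)$ for this weight function, together with the extension of Lusztig's leading coefficients of character values to the $\diamond$-quasi-split setting carried out in Proposition~\ref{invDn}, should then reduce the identity to a combinatorial identity on bipartitions of~$n$, which I would verify by induction on~$n$ using a branching rule from $W(B_n)$ to $W(B_{n-1})$ together with the explicit formula for $\varepsilon_{w_I}$ in Remark~\ref{minl}(c).

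The main obstacle is twofold. On the representation-theoretic side, one must extend each $\diamond$-invariant character of $W(D_n)$ to the semidirect product $\tW = W(D_n) \rtimes \langle \diamond \rangle$ in a manner compatible with Lusztig's families and his Fourier transform, because a wrong sign choice propagates through every subsequent scalar product and destroys the numerical match; systematising this is the content of Section~\ref{sec:basic}. On the combinatorial side, characters of $W(D_n)$ that are not fixed by $\diamond$ occur in pairs, and controlling them under induction from parabolic subgroups requires the strengthened Pieri rule of Proposition~\ref{solveD2} in Section~\ref{sec:dn}, whose proof is itself a nontrivial combinatorial task. The interlocking of these two ingredients with the combined $B_n/D_n$ cell-theoretic setup is the genuinely delicate part of the argument; at a technical level it is more intricate than handling the split and quasi-split cases separately, but the entanglement is precisely what allows an inductive scheme to close.
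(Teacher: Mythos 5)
Your proposal accurately reproduces the paper's overall program: reduce to irreducible $(W,S)$, dispose of the small quasi-split cases ($F_4$, $I_2(m)$) by direct computation with the Lusztig--Vogan module, handle the $w_0$-conjugation cases ($A_n$, $E_6$, and also $D_{2n+1}$, $I_2(2m+1)$) via the bijection $w\mapsto w_0w$ and the split result, and attack type $D_n$ by embedding into $\tW_n$ of type $B_n$ with the weight function $L$, the modified involution module of Lemma~\ref{typeD2}, the leading-coefficient positivity in Propositions~\ref{lu71}--\ref{invDn}, and the strengthened Pieri rule of Proposition~\ref{solveD2}. The one imprecision is your description of the induction in type $D_n$: rather than a plain branching rule from $\tW_n$ to $\tW_{n-1}$, the paper runs the induction through Lusztig's $\jrm$-induction from $\tW_{n-r}\times H_r$ together with the ``smoothly induced'' and ``cuspidal'' two-sided $L$-cell framework of (\ref{jind}) and the sign-tensoring reduction of (\ref{tense}), while the formula of Remark~\ref{minl}(c) is used to identify the module in (\ref{luvo}) and Proposition~\ref{kottD2} rather than in the inductive step itself.
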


Note that, by Remark~\ref{semidir}, the above formulation is indeed 
equivalent to the original formulation by Kottwitz. The above formulation
covers both the case where $\diamond$ is the identity (``split'' case)
and the case where $\diamond$ is non-trivial (``quasi-split'' case).

\begin{rem} \label{dperm} The map $\diamond$ induces an $A$-algebra
automorphism $h \mapsto h^\diamond$ on $\cH$ such that $T_w^\diamond=
T_{w^\diamond}$ for all $w \in W$. One easily checks that $\bC_w^\diamond=
\bC_{w^\diamond}$ for all $w \in W$. Consequently, $\diamond$ permutes
the left cells of $W$, the right cells of $W$ and the two-sided
cells of $W$. 

Now let $\Gamma \subseteq \cc$ be a left cell of $W$ and $\Cd$ be a 
$\diamond$-conjugacy class of $\diamond$-twisted involutions in $W$. Then 
we claim that 
\[ \Cd \cap \Gamma =\varnothing \qquad \mbox{unless} \qquad 
\cc^\diamond=\cc.  \]
Indeed, assume that $\cc^\diamond \neq \cc$ and that there exists some
$w \in \Cd \cap \Gamma$. Then $w^{-1}=w^\diamond \in \Gamma^\diamond 
\subseteq \cc^\diamond$ and, hence, $w^{-1} \not\in \cc$. But this 
contradicts \cite[Lemma~5.2(iii)]{LuB} which shows that, for any $w \in \cc$,
we also have $w^{-1} \in \cc$. Thus, the above claim is proved.

This statement provides a first test case for Conjecture~\ref{koco}: 
we will have to show that then we also have $\langle \kott_w^\diamond, 
[\Gamma]_1\rangle_W=0$ for $w \in \Cd$; see Remark~\ref{dperm1}
below for type $D_n$. 
\end{rem}

\begin{exmp} \label{longest} Let $w_0 \in W$ be the longest element
and assume that $w_0$ is not central in $W$. Then define $w^\diamond=
w_0ww_0$ for $w \in W$. One easily checks that the following hold.
\begin{itemize}
\item[(a)] $w\in W$ is a $\diamond$-twisted involution if and and only
if $w_0w$ is an involution.
\item[(b)] Let $w\in $W be a $\diamond$-twisted involution and 
$\Cd$ its $\diamond$-conjugacy class. Then $C_W^\diamond(w)=
C_W(w_0w)$ and $\Cd=w_0\CC$ where $\CC$ is the ordinary conjugacy 
class of $w_0w$ in $W$.
\end{itemize}
Now let $w\in W$ be a $\diamond$-twisted involution and $\Gamma$ be a 
left cell in $W$. Then $\Gamma w_0$ and $w_0 \Gamma$ are also left cells 
and we have $[w_0 \Gamma]_1=[\Gamma w_0]_1=[\Gamma]_1 \otimes \varepsilon$; 
see \cite[5.14]{LuB}, \cite[11.7]{lusztig}. Consequently, we obtain
\[ |\Cd\cap \Gamma|=|w_0(\CC \cap w_0\Gamma)|=|\CC \cap w_0\Gamma|,\]
where $\Cd$ is the $\diamond$-conjugacy class of $w$ and $\CC$ is
the ordinary conjugacy class of $w_0w$ in $W$. On the other hand, by 
\cite[5.3.1]{kottwitz}, we have
\[\langle \kott_{w_0w}^1, \chi \otimes \varepsilon\rangle_W=
\langle \kott_w^\diamond, \chi\rangle_W \qquad \mbox{for all 
$\chi \in \Irr(W)$}.\]
This yields that 
\[ \langle \kott_w^\diamond, [\Gamma]_1\rangle_W=\langle \kott_{w_0w}^1, 
[\Gamma]_1 \otimes \varepsilon\rangle_W=\langle \kott_{w_0w}^1, 
[w_0\Gamma]_1 \rangle_W.\]
So, if the split version of Kottwitz' conjecture holds for $W$, then we
have 
\[ \langle \kott_w^\diamond, [\Gamma]_1\rangle_W=\langle \kott_{w_0w}^1, 
[w_0\Gamma]_1 \rangle_W=|\CC \cap w_0\Gamma |=|\Cd \cap \Gamma|,\]
that is, the quasi-split version (with respect to $\diamond$) also holds. 

This discussion applies, in particular, to $(W,S)$ of type $A_n$, $D_{2n+1}$
$E_6$, $I_2(2m+1)$. The split version of Kottwitz' conjecture holds in type 
$A_n$, as already observed by Kottwitz himself \cite{kottwitz}; see also
\cite[Exp.~4.10]{boge}. For type $D_{2n+1}$, see \cite[Cor.~7.6]{boge} and 
Corollary~\ref{mainc} below. Finally, Casselman \cite{cass} has verified 
that the split version holds in type $E_6$; see Marberg \cite{marberg} for 
the dihedral groups. 
\end{exmp}

\begin{exmp} \label{qi2} Let $m \geq 2$ and $(W,S)$ be of type $I_2(2m)$
where $S=\{s_1,s_2\}$ and $s_1s_2$ has order $2m$. Assume that 
$s_1^\diamond=s_2$ and $s_2^\diamond=s_1$. By Remark~\ref{minl}, it is 
clear that, up to $\diamond$-conjugacy, $1$ is the only $\diamond$-twisted
involution; let $\Cd_1$ denote its $\diamond$-conjugacy class. Let us 
consider the corresponding character $\kott_1^\diamond$. We have
\[ \Irr(W)=\{1,\varepsilon,\varepsilon_1,\varepsilon_2,\chi_1,\chi_2,
\ldots,\chi_{m-1}\}\]
where $1$ is the trivial character, $\varepsilon$ is the sign character,
$\varepsilon_1$ and $\varepsilon_2$ are two further characters of degree~$1$
and each $\chi_j$ has degree~$2$; see \cite[5.3.4]{gepf}. Here, the notation
is such that $\varepsilon_1(s_1)=\varepsilon_2(s_2)=1$ and 
$\varepsilon_1(s_2)=\varepsilon_2(s_1)=-1$; furthermore, $\chi_j$ is
determined by the condition that $\chi_j(1_2)=2\cos(\pi j/m)$. Now note that  
\[ C_W^\diamond(1)=\{x \in W \mid x^\diamond=x\}=\{1,1_{2m}\}.\]
Furthermore, $\Phi_1=\varnothing$ and so $\varepsilon_1$ is the trivial 
character of $C_W^\diamond(1)$. Then we find that 
\[ \kott_1^\diamond=\Ind_{C_W^\diamond(1)}^W(1)=\left\{\begin{array}{cl}
\displaystyle 1+\varepsilon+\varepsilon_1+\varepsilon_2+
\sum_{1\leq j\leq (m-2)/2} 2\chi_{2j} & \qquad \mbox{if $m$ is even},\\
\displaystyle  1+\varepsilon+ \sum_{1\leq j\leq (m-1)/2} 2\chi_{2j} & 
\qquad \mbox{if $m$ is odd}.  \end{array}\right.\]
Next, we consider the left cells of $W$. To simplify notation, write 
$1_k=s_1s_2s_1 \cdots$ ($k$ factors) and $2_k=s_2s_1s_2 \cdots$ ($k$ 
factors); in particular, $1_{2m}=2_{2m}$ is the longest element in $W$. 
By \cite[8.8]{lusztig}, the left cells are 
\begin{gather*}
\Gamma_0:=\{1_0\}, \qquad \Gamma_1:= \{1_1,2_2,1_3,\ldots,1_{2m-1}\},\\
\Gamma_2:=\{2_1,1_2,2_3,\ldots, 2_{2m-1}\}, \qquad\Gamma_{2m}:=\{1_{2m}\}.
\end{gather*} 
Thus, we have 
\[ |\Cd_1 \cap \Gamma_i|=\left\{\begin{array}{cl} 1 &\qquad \mbox{if $i=0$ 
or $i=2m$},\\ m-1 &\qquad \mbox{if $i=1$ or $i=2$}.\end{array}\right.\]
The characters of the left cell modules are given by:
\[ [\Gamma_0]_1=1, \quad [\Gamma_1]_1=\varepsilon_1+\sum_{1\leq j
\leq m-1} \chi_j, \quad [\Gamma_2]_1=\varepsilon_2+\sum_{1\leq j \leq
m-1}\chi_j, \quad [\Gamma_{2m}]_1=\varepsilon;\]
see, for example, \cite[Exp.~2.2.8]{geja}. Consequently, we have
\[\langle\kott_1^\diamond,[\Gamma_i]_1\rangle_W=\left\{\begin{array}{cl}
1&\qquad\mbox{if $i=0$ or $i=2m$},\\ m-1 & \qquad \mbox{if $i=1$ or $i=2$}.
\end{array}\right.\]
Hence, we see that Kottwitz' conjecture holds in this case.
\end{exmp}

\begin{exmp} \label{qf4} Let $(W,S)$ be of type $F_4$, where $S=\{s_0, 
s_1,s_2, s_3\}$ is such that $s_0s_1$ and $s_2s_3$ have order~$3$ and 
$s_1s_2$ has order~$4$. Assume that $s_0^\diamond=s_3$, $s_1^\diamond=s_2$,
$s_2^\diamond=s_1$ and $s_3^\diamond=s_0$. Let $\Cd_1=\{x^\diamond x^{-1}
\mid x \in W\}$ be the $\diamond$-conjugacy class containing $w=1$. Using 
Remark~\ref{minl} we find that $\Cd_1$ is the only $\diamond$-conjugacy 
class of $\diamond$-twisted involutions in $W$. We have $C_W^\diamond(1)=
\{x \in W \mid x^\diamond=x\}$; this is a dihedral group of order $16$, 
generated by $s_0s_3$ and $(s_1s_2)^2$. Since $\Phi_1=\varnothing$, we obtain 
by a direct computation (which can be done by hand):
\[ \kott_1^\diamond=\Ind_{C_W^\diamond(1)}^W(1)=1_1+1_4+2_1+2_2+2_3+2_4+
2\cdot 4_1+9_1+9_2+9_3+9_4+6_1+12_1,\]
where we use the notation for $\Irr(W)$ as in \cite[Table~C.3 (p.~413)]{gepf}.
Using a computer algebra system capable of computing Kazhdan--Lusztig cells,
it is straightfoward to check that Conjecture~\ref{koco} holds. For example,
using {\sf PyCox} \cite{pycox}, the left cells of $W$ and the characters 
of the corresponding left cell modules are obtained by the 
following commands: 
\begin{verbatim}
    >>> W=coxeter("F",4); l=klcells(W,1,v)[0]
    >>> ch=[leftcellleadingcoeffs(W,1,v,c)['char'] for c in l]
    >>> chartable(W)['charnames']
\end{verbatim}
The last command gives the labelling of $\Irr(W)$. The set $C_1$ is 
obtained by:
\begin{verbatim}
    >>> p=[3,2,1,0]     # the permutation on S={0,1,2,3}
    >>> C1=noduplicates([W.reducedword(w[::-1]+[p[s] for s in w],W) 
                                                for w in allwords(W)]
\end{verbatim}
Further explanations are available through the online help in {\sf PyCox}.
\end{exmp}

\begin{rem} \label{dchar} The map $w \mapsto w^\diamond$ on $W$ also 
induces an operation on $\Irr(W)$, which we denote by $\chi \mapsto 
\chi^\diamond$. We have $\chi^\diamond(w)=\chi(w^\diamond)$ for all 
$w \in W$. Let $\tilde{W}=\langle W,\gamma\rangle$ be the semidirect 
product as in Remark~\ref{semidir}. By standard results on Clifford theory,
we have
\[ \Ind_W^{\tW}(\chi) \in \Irr(\tW) \qquad \mbox{for all $\chi \in
\Irr(W)$ such that $\chi^\diamond\neq \chi$}.\]
On the other hand, let us denote
\[\Irr^\diamond(W):=\{\chi \in \Irr(W) \mid \chi^\diamond=\chi\}.\]
Then each $\chi \in \Irr^\diamond(W)$ has exactly two extensions to $\tW$, 
which differ only by a sign on elements in the coset $\gamma W$. 
\end{rem}

\begin{rem} \label{dfam} Let $\cc$ be a two-sided cell of $W$. We set
\[ \Irr(W\mid \cc)=\{\chi \in \Irr(W) \mid \langle [\Gamma]_1,\chi\rangle_W
\neq 0 \mbox{ for some left cell $\Gamma \subseteq \cc$}\}.\]
Alternatively, we have $\Irr(W\mid \cc):=\{\chi \in \Irr(W) \mid \chi 
\sim_{LR} w \mbox{ for some $w \in \cc$}\}$, where the relation  ``$\chi 
\sim_{LR} w$'' is defined in \cite[5.1 (p.~139)]{LuB}. (This easily follows 
from the definitions; see, for example, \cite[2.2.18]{LuB}.) Thus, we 
obtain a partition
\[ \Irr(W)=\coprod_{\cc} \Irr(W\mid \cc).\]
where $\cc$ runs over all two-sided cells in $W$. Now recall from 
Remark~\ref{dperm} that $\diamond$ permutes the left cells of $W$. One
easily sees that, for a left cell $\Gamma$ of $W$, we have
\[ \mbox{trace}(T_w,[\Gamma^\diamond]_A)=\mbox{trace}(T_{w^\diamond},
[\Gamma]_A) \qquad \mbox{for all $w \in W$}.\]
This certainly implies that $\Irr(W\mid \cc^\diamond)=\{ \chi^\diamond 
\mid \chi \in \Irr(W\mid \cc)\}$.
\end{rem}

\begin{rem} \label{exthec0} Having dealt with type $F_4$ and the
dihedral groups, we shall assume from now on that $W$ is a Weyl group and
that $\diamond$ is ``ordinary'' in the sense of \cite[3.1]{LuB}, that is,
whenever $s,t \in S$ are in the same $\diamond$-orbit, then the product
$st$ has order $2$ or $3$. This has the following consequences.
\begin{itemize}
\item[(a)] Each $\chi \in \Irr(W)$ can be realised over $\Q$. (This is a 
well-known fact; see, for example, \cite[6.3.8]{gepf}.) 
\item[(b)] The two extensions of any $\chi \in \Irr^\diamond(W)$ to $\tW$ 
can also be realised over $\Q$. (See \cite[Prop.~3.2]{LuB}). 
\item[(c)] If $\cc$ is a two-sided cell of $W$ such that $\cc^\diamond=\cc$,
then $\Irr(W\mid \cc)\subseteq \Irr^\diamond(W)$. (See \cite[4.17]{LuB}.)
\end{itemize}
In any case, as far as the quasi-split version of Kottwitz' conjecture is 
concerned, it now remains to deal with type $D_n$ and the non-trivial
graph automorphism of order~$2$. 
\end{rem}

\section{On the irreducible characters in type $D_n$} \label{sec:dn}

In this section we fix some notation concerning the irreducible characters 
of Coxeter groups of classical type. This is especially relevant in type 
$D_n$ for $n$ even, where there are characters which are not invariant 
under the graph automorphism of order~$2$; it will be important for us to 
know exactly how to distinguish these characters from each other. In 
Corollary~\ref{solveD2a}, we establish a strengthened ``branching rule'' 
for type $D_n$. Then, in (\ref{kottspecial}) and Proposition~\ref{kottD1},
we state explicit formulae for the decomposition of $\kott_w^1$ and 
$\kott_w^\diamond$.

\begin{abschnitt} \label{macd} For $\chi \in\Irr(W)$, let $\bb_\chi$ denote 
the smallest integer $i \geq 0$ such that $\chi$ occurs in the $i$th 
symmetric power of the standard reflection representation of $W$. For 
example, if $\varepsilon$ is the sign character of $W$, then 
\[ \bb_{\varepsilon}=|T|=\ell(w_0)\]
where $w_0\in W$ is the longest element and $T=\{wsw^{-1} \mid s \in S, 
w \in W\}$ is the set of all reflections in $W$ (see 
\cite[5.3.1(a)]{gepf}). Let $W'\subseteq W$ be a subgroup generated by 
reflections and let $T'=W'\cap T$. Let $\varepsilon'$ be the sign character 
of $W'$. By a result due to Macdonald (see \cite[5.2.11]{gepf}), there is 
a unique $\chi\in \Irr(W)$ such that $\bb_\chi=|T'|$ and 
\[ \Ind_{W'}^W(\varepsilon')=\chi+ \mbox{combination of various $\psi \in
\Irr(W)$ such that $\bb_\psi>\bb_\chi$}.\]
We shall denote this character by 
\[ \chi:=\jrm_{W'}^W(\varepsilon').\]
This ``$\jrm$-induction'' can be used to systematically construct all the 
irreducible characters of $W$ of type $A_{n-1}$, $B_n$ and $D_n$; 
see \cite[Chap.~5]{gepf}. 
\end{abschnitt}


\begin{exmp} \label{typeA} Let $n \geq 1$ and $W=\SG_n$ be the symmetric
group, where the generators are given by the basic transpositions $s_i=
(i+1)$ for $1 \leq i \leq n-1$. (We also set $\SG_0=\{1\}$.) It is 
well-known that the irreducible characters of $\SG_n$ are parametrized 
by the partitions of $n$; we write this as
\[ \Irr(\SG_n)=\{\chi^\alpha \mid \alpha \vdash n\}.\]
This labelling is determined as follows; see, for example, 
\cite[5.4.7]{gepf}. Given a partition $\alpha\vdash n$, we denote by 
$\alpha^*$ denote the transpose partition. Let $\SG_{\alpha^*} \subseteq 
\SG_n$ be the corresponding Young subgroup; we have $\SG_{\alpha^*} \cong 
\SG_{\alpha_1^*} \times \SG_{\alpha_2^*} \times \ldots \times
\SG_{\alpha_k^*}$, where $\alpha_1^*, \alpha_2^*,\ldots,\alpha_k^*$ are 
the parts of $\alpha^*$. Let $\varepsilon_{\alpha^*}$ be the sign 
character of $\SG_{\alpha^*}$. Then 
\[\chi^{\alpha}=\jrm_{\SG_{\alpha^*}}^{\SG_n}(\varepsilon_{\alpha^*}) 
\qquad \mbox{and} \qquad  \bb_{\chi^{\alpha}}=n(\alpha):=\sum_{1\leq i 
\leq l} (i-1)\alpha_i\]
where $\alpha=(\alpha_1\geq \alpha_2 \geq \ldots \geq \alpha_l\geq 0)$.
\end{exmp}

\begin{exmp} \label{typeB} Let $n \geq 1$ and $\tW_n$ be a Coxeter group 
of type $B_n$, with generators $\{t,s_1,s_2,\dots,s_{n-1}\}$ and diagram 
given as follows.
\begin{center}
\begin{picture}(220,30)
\put( 40, 10){\circle{10}}
\put( 44,  7){\line(1,0){33}}
\put( 44, 13){\line(1,0){33}}
\put( 81, 10){\circle{10}}
\put( 86, 10){\line(1,0){29}}
\put(120, 10){\circle{10}}
\put(125, 10){\line(1,0){20}}
\put(155,  7){$\cdot$}
\put(165,  7){$\cdot$}
\put(175,  7){$\cdot$}
\put(185, 10){\line(1,0){20}}
\put(210, 10){\circle{10}}
\put( 38, 20){$t$}
\put( 76, 20){$s_1$}
\put(116, 20){$s_2$}
\put(204, 20){$s_{n{-}1}$}
\end{picture}
\end{center}
(We also set $\tW_0=\{1\}$.) The irreducible characters of $\tW_n$ are 
parametrised by pairs of partitions $(\alpha,\beta)$ such that $|\alpha|+
|\beta|=n$. We write this as
\[ \Irr(\tW_n)=\{\tchi^{(\alpha,\beta)} \mid (\alpha,\beta) \vdash n\}.\]
For $(\alpha,\beta)\vdash n$, there is a reflection subgroup 
$\tW_{\alpha,\beta} \subseteq \tW_n$ of type 
\[ D_{\alpha_1} \times D_{\alpha_2} \times \ldots \times D_l \times
B_{\beta_1} \times B_{\beta_2} \times \ldots \times B_{\beta_k}\]
where $\alpha_1,\alpha_2,\ldots,\alpha_l$ are the parts of $\alpha$ and 
$\beta_1,\beta_2,\ldots,\beta_k$ are the parts of $\beta$. Let 
$\tilde{\varepsilon}_{\alpha,\beta}$ be the sign character on 
$\tW_{\alpha,\beta}$. Then, by \cite[5.5.1, 5.5.3]{gepf}, we have 
\[\tchi^{(\alpha,\beta)}=\jrm_{\tW_{\alpha,\beta}}^{\tW_n}
(\tilde{\varepsilon}_{\alpha, \beta}) \qquad \mbox{and} \qquad 
\bb_{\tchi^{(\alpha,\beta)}}=2n(\alpha)+ 2n(\beta)+|\beta|.\] 
Note also that $\tW_n\cong (\Z/2\Z)^n \rtimes \SG_n$ and there is a 
corresponding description of $\Irr(\tW_n)$ in terms of Clifford theory; see 
\cite[5.5.6]{gepf}.
\end{exmp}

\begin{exmp} \label{typeD} Let $n \geq 2$ and $W_n$ be a Coxeter group 
of type $D_n$, with generators $u,s_1,\ldots,s_{n-1}$ and diagram given 
as follows.
\begin{center}
\begin{picture}(220,40)
\put( 41,  3){\circle{10}}
\put( 41, 33){\circle{10}}
\put( 46,  4){\line(3,1){31}}
\put( 46, 32){\line(3,-1){31}}
\put( 81, 18){\circle{10}}
\put( 86, 18){\line(1,0){29}}
\put(120, 18){\circle{10}}
\put(125, 18){\line(1,0){20}}
\put(155, 15){$\cdot$}
\put(165, 15){$\cdot$}
\put(175, 15){$\cdot$}
\put(185, 18){\line(1,0){20}}
\put(210, 18){\circle{10}}
\put( 22, 31){$s_1$}
\put( 23,  0){$u$}
\put( 76, 28){$s_2$}
\put(116, 28){$s_3$}
\put(204, 28){$s_{n{-}1}$}
\end{picture}
\end{center}
Let $w \mapsto w^\diamond$ 
be defined by $u^\diamond=s_1$, $s_1^\diamond=u$ and $s_i^\diamond=s_i$ for 
$2 \leq i \leq n-1$. Then we can identify the semidirect product $W_n 
\rtimes \langle \diamond\rangle$ (see Remark~\ref{semidir}) with a 
Coxeter group $\tW_n$ of type $B_n$, with generators $t,s_1,\ldots,s_{n-1}$ 
and diagram as in Example~\ref{typeB}. We have an embedding
$W_n \hookrightarrow \tW_n$ given by the map
\[ u \mapsto ts_1t, \quad s_1 \mapsto s_1, \quad s_2 \mapsto s_2, 
\quad \ldots, \quad s_{n-1} \mapsto s_{n-1}.\]
Under this identification, we have $w^\diamond=twt$ for all $w \in W_n$.
(Thus, the generator $t$ is the ``additional'' element denoted by $\gamma$
in Remark~\ref{semidir}; by convention, we also set $W_0=W_1=\{1\}$,
where $\tW_0=\{1\}$ and $\tW_1=\{1,t\}$.) This provides a convenient 
setting for classifying the irreducible characters of $W_n$. Given 
$(\alpha, \beta) \vdash n$, we denote by $\chi^{[\alpha,\beta]}$ the 
restriction of $\tchi^{(\alpha, \beta)} \in \Irr(\tW_n)$ to $W_n$. Then 
we have (see \cite[5.6.1, 5.6.2]{gepf}):
\begin{itemize}
\item[(a)] If $\alpha \neq \beta$, then $\chi^{[\alpha,\beta]}=\chi^{[\beta,
\alpha]} \in \Irr(W_n)$. We have 
\[ \bb_{\chi^{[\alpha,\beta]}}=2n(\alpha)+2n(\beta)+
\min \{|\alpha|,|\beta|\}.\]
\item[(b)] If $\alpha=\beta$, then $\chi^{[\alpha,\beta]}=
\chi^{[\alpha,+]} +\chi^{[\alpha,-]}$ where $\chi^{[\alpha,+]}$, 
$\chi^{[\alpha,-]}$ are distinct irreducible characters of $W_n$. We have
\[ \bb_{\chi^{[\alpha,+]}}=\bb_{\chi^{[\alpha,-]}}=4n(\alpha)+n/2.\]
\end{itemize}
Furthermore, all irreducible characters of $W_n$ arise in this way. Of 
course, the second case can only occur if $n$ is even. In this case, the two 
characters $\chi^{[\alpha,\pm]}$ are explicitly given as follows;
see \cite[4.6.2]{LuB}. Let 
\[ H_n^+=\langle s_1,s_2,\ldots,s_{n-1}\rangle \qquad \mbox{and} \qquad 
H_n^-=\langle u,s_2,\ldots,s_{n-1}\rangle.\]
Both $H_n^+,H_n^-$ are isomorphic to $\SG_n$. Let $\alpha \vdash n/2$ and 
$\SG_{2\alpha^*}$ be the corresponding Young subgroup in $\SG_{n}$ where 
$2\alpha^*$ denotes the partition of $n$ obtained by multiplying all 
parts of $\alpha$ by $2$. We have corresponding subgroups $H_{2\alpha^*}^+
\subseteq H_n^+$ and $H_{2\alpha^*}^-\subseteq H_n^-$. Then
\[ \chi^{[\alpha,+]}=\jrm_{H_{2\alpha^*}^+}^{W_n}(\varepsilon_{2\alpha^*}^+)
\qquad \mbox{and} \qquad \chi^{[\alpha,-]}=\jrm_{H_{2\alpha^*}^-}^{W_n}
(\varepsilon_{2\alpha^*}^-)\]
where $\varepsilon_{2\alpha^*}^+$ denotes the sign character of
$H_{2\alpha^*}^+$ and $\varepsilon_{2\alpha^*}^-$ denotes the sign 
character of $H_{2\alpha^*}^-$. (This is also discussed in 
\cite[\S 5.6]{gepf} but \cite[5.6.3]{gepf} has to be reformulated as above.)  
\end{exmp}

We take this occasion to correct an error in \cite{gepf}. (This will
actually be essential for the proof of the strengthened ``branching rule''
in Corollary~\ref{solveD2a}.) Let $\varepsilon$ be the sign character 
of $W_n$. In \cite[Rem.~5.6.5]{gepf}, it is stated that $\chi^{[\alpha,+]}
\otimes \varepsilon=\chi^{[\alpha^*,+]}$, where $\alpha^*$ denotes the 
conjugate partition. This can be easily seen to be wrong already in small 
examples. The correct statement is as follows.

\begin{lem} \label{solveD1} Assume that $n\geq 2$ is even and let
$\alpha \vdash n/2$.
\begin{itemize}
\item[(a)] If $n/2$ is even, then $\chi^{[\alpha,+]}\otimes \varepsilon=
\chi^{[\alpha^*,+]}$ and $\chi^{[\alpha,-]}\otimes \varepsilon=
\chi^{[\alpha^*-]}$.
\item[(b)] If $n/2$ is odd, then 
\[ \chi^{[\alpha,+]}\otimes \varepsilon=\chi^{[\alpha^*,-]} \qquad \mbox{and}
\qquad \chi^{[\alpha,-]}\otimes\varepsilon=\chi^{[\alpha^*+]}.\]
\item[(c)] Let $\sigma_{n/2}:=s_1s_3 \cdots s_{n-1} \in W_n$. (Note that
$\sigma_{n/2}$ is a product of $n/2$ pairwise commuting generators $s_i$.)
Then 
\[ \chi^{[\alpha,+]}(\sigma_{n/2})-\chi^{[\alpha,-]}
(\sigma_{n/2})=(-1)^{n/2}\, 2^{n/2} \,\chi^{\alpha}(1)\]
where $\chi^{\alpha}$ denotes the irreducible character of $\SG_{n/2}$ 
labelled by $\alpha$.
\end{itemize}
\end{lem}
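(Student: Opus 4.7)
The plan is to first prove (c), and then derive (a) and (b) from (c) by a Clifford-theoretic argument.

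For the deduction of (a) and (b) from (c): the sign-twist identity $\tchi^{(\alpha,\alpha)} \otimes \tilde{\varepsilon} = \tchi^{(\alpha^*,\alpha^*)}$ in type $B_n$ (a standard consequence of the $\jrm$-induction formula of Example~\ref{typeB}, interchanging $\alpha$ and $\beta^*$) restricts, via the embedding $W_n \hookrightarrow \tW_n$ of Example~\ref{typeD} and the fact $\tilde{\varepsilon}|_{W_n} = \varepsilon$ (lengths agree), to
\[ (\chi^{[\alpha,+]} + \chi^{[\alpha,-]}) \otimes \varepsilon = \chi^{[\alpha^*,+]} + \chi^{[\alpha^*,-]}. \]
Since each $\chi^{[\alpha,\pm]} \otimes \varepsilon$ is irreducible, it must equal $\chi^{[\alpha^*,+]}$ or $\chi^{[\alpha^*,-]}$. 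Evaluating both sides at $\sigma := \sigma_{n/2}$ using (c) for both $\alpha$ and $\alpha^*$ (with $\chi^{\alpha}(1) = \chi^{\alpha^*}(1)$) and $\varepsilon(\sigma) = (-1)^{n/2}$, a direct sign comparison shows $\chi^{[\alpha,+]} \otimes \varepsilon$ equals $\chi^{[\alpha^*,+]}$ when $n/2$ is even and $\chi^{[\alpha^*,-]}$ when $n/2$ is odd. The corresponding identities for $\chi^{[\alpha,-]} \otimes \varepsilon$ follow by applying $\diamond$ throughout, using $\chi^{[\alpha,+]} \circ \diamond = \chi^{[\alpha,-]}$ and $\varepsilon \circ \diamond = \varepsilon$.

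For (c) itself, I would first observe that $\sigma_{n/2}$ has signed cycle type $(2^{n/2};\emptyset)$ in $\tW_n$; by the classical splitting criterion (all parts even, negative part empty), the corresponding $\tW_n$-class decomposes upon restriction to $W_n$ into the two classes of $\sigma_{n/2}$ and $\sigma_{n/2}^\diamond = us_3 s_5 \cdots s_{n-1}$. Since the two constituents of $\tchi^{(\alpha,\alpha)}|_{W_n}$ are swapped by $\diamond$, the difference in (c) equals $\chi^{[\alpha,+]}(\sigma_{n/2}) - \chi^{[\alpha,+]}(\sigma_{n/2}^\diamond)$. I would then expand $\chi^{[\alpha,+]} = \jrm_{H_{2\alpha^*}^+}^{W_n}(\varepsilon^+_{2\alpha^*})$ and apply the Frobenius character formula. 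Two geometric inputs are crucial: the $W_n$-conjugates of $\sigma_{n/2}$ that lie inside $H_{2\alpha^*}^+$ form a single $H_{2\alpha^*}^+$-conjugacy class (parametrized by the matchings of the Young-subgroup blocks), whereas the $W_n$-class of $\sigma_{n/2}^\diamond$ is disjoint from $H_n^+$ and hence from $H_{2\alpha^*}^+$. The sign $(-1)^{n/2}$ arises from $\varepsilon(\sigma_{n/2}) = (-1)^{n/2}$, while the factor $2^{n/2}\chi^\alpha(1)$ should emerge from the centralizer index $|C_{W_n}(\sigma_{n/2})|/|C_{H_n^+}(\sigma_{n/2})| = 2^{n/2}$ combined with the type-$A$ identification $\chi^\alpha = \jrm_{\SG_{\alpha^*}}^{\SG_{n/2}}(\varepsilon)$ of Example~\ref{typeA}, applied to the quotient structure on the blocks.

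The main obstacle is the $\jrm$-truncation. Ordinary induction $\Ind_{H_{2\alpha^*}^+}^{W_n}(\varepsilon^+_{2\alpha^*})$ contains, besides the distinguished component $\chi^{[\alpha,+]}$, further higher-$\bb$ constituents — including other split pairs $\chi^{[\delta,\pm]}$ for $\delta \neq \alpha$ — whose values at $\sigma_{n/2}$ must be controlled. To isolate the $\chi^{[\alpha,\pm]}$-part I would take the difference $\Ind_{H_{2\alpha^*}^+}^{W_n}(\varepsilon^+)(\sigma_{n/2}) - \Ind_{H_{2\alpha^*}^-}^{W_n}(\varepsilon^-)(\sigma_{n/2})$: the $\diamond$-symmetry relating $H_{2\alpha^*}^\pm$ forces the contributions from $\diamond$-fixed higher irreducibles $\chi^{[\beta,\gamma]}$ (with $\beta \neq \gamma$) to cancel, so only split-pair contributions survive. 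These I would handle by induction on $|\alpha|$, reducing higher-$\bb$ split pairs to smaller instances of (c). I expect pinning down the precise sign $(-1)^{n/2}$ in the resulting combinatorial identity to be the most technically delicate step.
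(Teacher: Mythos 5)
Your deduction of parts (a) and (b) from part (c) is clean and correct: the restriction of $\tchi^{(\alpha,\alpha)}\otimes\tilde\varepsilon=\tchi^{(\alpha^*,\alpha^*)}$ to $W_n$ plus an evaluation at $\sigma_{n/2}$, using (c) for both $\alpha$ and $\alpha^*$, does pin down the signs, and the argument with $\diamond$ for the other component is fine. Interestingly, the paper runs this same type of sign-comparison argument, but with a different pair of characters (the characters $\psi^{[\alpha,\pm]}$ defined in \cite[10.4.6]{gepf} and \cite[Thm.~5.1]{goetz1}), for which the key evaluation $\psi^{[\alpha,+]}(\sigma_{n/2})-\psi^{[\alpha,-]}(\sigma_{n/2})=2^{n/2}\chi^\alpha(1)$ is already established in the literature; it then identifies $\chi^{[\alpha,\pm]}=\psi^{[\alpha^*,\pm]}\otimes\varepsilon$ by a single inner-product calculation against $\Ind_{H_{2\alpha^*}^+}^{W_n}(\varepsilon_{2\alpha^*})$ (using \cite[10.4.10]{gepf}), and obtains (a), (b), (c) all at once from this identification.

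The real gap is in your proposed direct proof of (c). Taking the difference $D_\alpha := \Ind_{H_{2\alpha^*}^+}^{W_n}(\varepsilon^+)(\sigma_{n/2}) - \Ind_{H_{2\alpha^*}^-}^{W_n}(\varepsilon^-)(\sigma_{n/2})$, the $\diamond$-symmetry does kill the $\diamond$-fixed constituents as you say, and your class-intersection analysis is correct, giving
\[ D_\alpha = (-1)^{n/2}\,2^{n/2}\,\frac{(n/2)!}{\prod_i\alpha_i^*!} = \sum_{\delta\vdash n/2}\bigl(m^\alpha_{[\delta,+]}-m^\alpha_{[\delta,-]}\bigr)\,\Delta_\delta, \]
where $\Delta_\delta:=\chi^{[\delta,+]}(\sigma_{n/2})-\chi^{[\delta,-]}(\sigma_{n/2})$ is the quantity you need, and $m^\alpha_{[\delta,\pm]}$ are the multiplicities in $\Ind_{H_{2\alpha^*}^+}^{W_n}(\varepsilon^+)$. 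The left-hand side is strictly larger in absolute value than the target $(-1)^{n/2}2^{n/2}\chi^\alpha(1)$ (since $(n/2)!/\prod_i\alpha_i^*!\geq\chi^\alpha(1)$ with equality only for $\alpha=(1^{n/2})$), so the higher-$\bb$ split-pair terms genuinely contribute and cannot be ignored. Your proposal to control them ``by induction on $|\alpha|$'' breaks down: every split pair $\chi^{[\delta,\pm]}$ that can appear has $\delta\vdash n/2$, so $|\delta|=|\alpha|$ and there is no smaller instance of (c) to invoke. What is actually needed is (i) the multiplicity differences $m^\alpha_{[\delta,+]}-m^\alpha_{[\delta,-]}$ for all higher-$\bb$ $\delta$, which is precisely the kind of $+/-$ bookkeeping that the lemma is trying to establish, and (ii) a way to invert the resulting linear system over all $\alpha\vdash n/2$. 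This is a substantial combinatorial problem; it is exactly the content the paper outsources to the cited results on $\psi^{[\alpha,\pm]}$. As written, the proposal is circular at the point where you need to identify which of $\chi^{[\delta,+]},\chi^{[\delta,-]}$ occurs in the induced character, and the induction that is supposed to break the circle is vacuous.
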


\begin{proof} In \cite[10.4.6]{gepf} (see also \cite[Thm.~5.1]{goetz1}), 
we find the definition of a collection of irreducible characters of $W_n$, 
which we denote here by $\{\psi^{[\alpha, \pm]} \mid \alpha \vdash n/2\}$, 
such that
\[\chi^{[\alpha,+]}+\chi^{[\alpha,-]}=\psi^{[\alpha,+]}+\psi^{[\alpha,-]}
\qquad \mbox{for all $\alpha \vdash n/2$};\]
furthermore, it is shown there that 
\[ \psi^{[\alpha,+]}(\sigma_{n/2})-\psi^{[\alpha,-]}
(\sigma_{n/2})=2^{n/2}\, \chi^{\alpha}(1).\]
Note that this identity allows us to distinguish $\psi^{[\alpha,+]},
\psi^{[\alpha,-]}$ one from another. Tensoring with $\varepsilon$, we obtain 
\[ (\psi^{[\alpha,+]} \otimes \varepsilon)(\sigma_{n/2})-(\psi^{[\alpha,-]}
\otimes \varepsilon) (\sigma_{n/2})=\varepsilon(\sigma_{n/2})\,2^{n/2}\, 
\chi^{\alpha}(1)=(-1)^{n/2}\, 2^{n/2} \,\chi^{\alpha}(1).\]
Now, by \cite[5.5.6]{gepf}, we have $\tchi^{(\alpha,\alpha)} \otimes
\tilde{\varepsilon}=\tchi^{(\alpha^*,\alpha^*)}$ where $\tilde{\varepsilon}$ 
is the sign character of $\tW_n$ (an extension of $\varepsilon$). This 
already implies that 
\[ \psi^{[\alpha,+]} \otimes \varepsilon=\psi^{[\alpha^*,\pm]} \qquad 
\mbox{and} \qquad \psi^{[\alpha,-]} \otimes \varepsilon=\psi^{[\alpha^*,
\mp]}.\]
Comparing with the identity 
\[ \psi^{[\alpha^*,+]}(\sigma_{n/2})-\psi^{[\alpha^*,-]}
(\sigma_{n/2})=2^{n/2}\, \chi^{\alpha^*}(1)=2^{n/2}\, \chi^{\alpha^*}(1),\]
we conclude that the desired description of the effect of tensoring with 
$\varepsilon$ holds for the characters $\psi^{[\alpha,\pm]}$. Once this is
shown, we can proceed as follows. By the computation in \cite[10.4.10]{gepf}, 
we have 
\[ \Big\langle \Ind_{H_{2\alpha^*}^+}^{W_n}(1_{2\alpha^*}), 
\psi^{[\alpha^*,+]}-\psi^{[\alpha^*,-]} \Big\rangle_{W_n}=1,\]
where $1_{2\alpha^*}$ stands for the trivial character of $H_{2\alpha^*}^+$. 
Consequently, we also have 
\[ \Big\langle \Ind_{H_{2\alpha^*}^+}^{W_n}(\varepsilon_{2\alpha^*}),
\psi^{[\alpha^*,+]}\otimes \varepsilon-\psi^{[\alpha^*,-]} \otimes 
\varepsilon' \Big\rangle_{W_n}=1,\]
where $\varepsilon_{2\alpha^*}$ is the sign character of $H_{2\alpha^*}^+$.
Comparing with the definition of $\chi^{[\alpha,\pm]}$ in Example~\ref{typeD},
we conclude that we must have 
\[\chi^{[\alpha,+]}=\psi^{[\alpha^*,+]} \otimes \varepsilon \qquad 
\mbox{and} \qquad \chi^{[\alpha,-]}=\psi^{[\alpha^*,-]}\otimes\varepsilon\]
for all $\alpha \vdash n/2$. This yields (a), (b), (c).
\end{proof}

\begin{rem} \label{speciale} Assume that $n \geq 2$ is even and let
$\sigma_{n/2}=s_1s_3 \cdots s_{n-1}\in W_n$ as above. Let $C_0$ be the
conjugacy class of $\sigma_{n/2}$ in $W_n$. By \cite[Prop.~3.4.12]{gepf}, 
we have $tC_0t \neq C_0$ and $\{C_0,tC_0t\}$ is the only pair 
of conjugacy classes of involutions with this property. Furthermore, a 
direct computation shows (see also the formula in \cite[4.3]{goetz1}):
\[ |C_{\SG_n}(\sigma_{n/2})|=2^{n/2}(n/2)! \qquad \mbox{and} \qquad 
|C_{W_n}(\sigma_{n/2})|= 2^n (n/2)!.\]
\end{rem}

We can now  state the following strengthening of the induction 
formula in \cite[6.4.9]{gepf}.

\begin{prop} \label{solveD2} Assume that $n \geq 2$ is even. Let $r \in 
\{2,4,\ldots,n\}$ and consider the parabolic subgroup $W'=W_{n-r}\times 
H_r$ where $W_{n-r}=\langle u,s_1,\ldots,s_{n-r-1}\rangle$ (type 
$D_{n-r}$) and $H_r=\langle s_{n-r+1},\ldots,s_{n-1} \rangle \cong \SG_r$. 
Let $\alpha' \vdash (n-r)/2$ and denote by $\varepsilon_r$ the sign 
character on the factor $H_r$. Then 
\[ \Ind_{W'}^{W_n}\bigl(\chi^{[\alpha',+]} \boxtimes \varepsilon_r\bigr)=
\sum_{\alpha} \chi^{[\alpha,+]} \quad+\quad  \mbox{``further terms''},\]
where the sum runs over all partitions $\alpha \vdash n/2$ whose 
Young diagram is obtained from that of $\alpha'$ by adding $r/2$ boxes,
with no two boxes in the same row; the expression ``further terms'' 
stands for a sum of various $\chi \in \Irr(W_n)$ which can be extended 
to $\tW_n$. 
\end{prop}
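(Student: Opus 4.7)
Write $\rho := \Ind_{W'}^{W_n}(\chi^{[\alpha',+]}\boxtimes \varepsilon_r)$, and let $a_\alpha, b_\alpha$ denote the multiplicities of $\chi^{[\alpha,+]}$ and $\chi^{[\alpha,-]}$ in $\rho$. Since $\diamond$ fixes $\varepsilon_r$ and swaps $\chi^{[\alpha',+]}\leftrightarrow\chi^{[\alpha',-]}$, we have $\rho^\diamond = \Ind_{W'}^{W_n}(\chi^{[\alpha',-]}\boxtimes \varepsilon_r)$; moreover, all remaining constituents $\chi^{[\gamma,\delta]}$ with $\gamma\neq \delta$ are $\diamond$-invariant and extend to $\tW_n$, so they are absorbed into the ``further terms'' allowed by the statement. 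My plan is to compute $\rho+\rho^\diamond$ and $\rho-\rho^\diamond$ separately and extract $a_\alpha$ and $b_\alpha$.

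First, using $\chi^{[\alpha',\alpha']} = \Res_{W_{n-r}}^{\tW_{n-r}}\tchi^{(\alpha',\alpha')}$ and transitivity of induction through $\tV := \tW_{n-r}\times H_r$, Clifford theory gives $\Ind_{W_n}^{\tW_n}(\rho+\rho^\diamond) = 2\,\Ind_{\tV}^{\tW_n}(\tchi^{(\alpha',\alpha')}\boxtimes \varepsilon_r)$. Applying the classical Pieri rule for $\tW_n$ (sign of $H_r$ produces vertical strips) and restricting to $W_n$, one obtains $\rho+\rho^\diamond = \sum\chi^{[\alpha,\beta]}$ with $\alpha/\alpha', \beta/\alpha'$ vertical strips of sizes totalling $r$. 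Extracting the contribution of $\chi^{[\alpha,\alpha]} = \chi^{[\alpha,+]}+\chi^{[\alpha,-]}$ forces $|\alpha/\alpha'|=r/2$, so $a_\alpha+b_\alpha=1$ exactly when $\alpha/\alpha'$ is a vertical strip of size $r/2$ and $0$ otherwise. Consequently $a_\alpha-b_\alpha \in \{-1,0,1\}$, nonzero only in that case.

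To pin down the signs, evaluate $\rho-\rho^\diamond = \Ind_{W'}^{W_n}\bigl((\chi^{[\alpha',+]}-\chi^{[\alpha',-]})\boxtimes \varepsilon_r\bigr)$ at $\sigma_{n/2}=s_1s_3\cdots s_{n-1}$. By Lemma~\ref{solveD1}(c) the expansion side equals $(-1)^{n/2}\,2^{n/2}\sum_\alpha(a_\alpha-b_\alpha)\chi^\alpha(1)$. For the induced side, the Frobenius formula and a maximality argument show that elements of $W'$ conjugate in $W_n$ to $\sigma_{n/2}$ are precisely products $\sigma_1\tau$ with $\sigma_1\in W_{n-r}$ a product of $(n-r)/2$ commuting reflections and $\tau\in H_r$ a product of $r/2$ commuting transpositions. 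Moreover the identity $\sigma_{(n-r)/2}\cdot(s_{n-r+1}s_{n-r+3}\cdots s_{n-1})=\sigma_{n/2}$, together with conjugation by $t$, shows that only the $W_{n-r}$-class $D_0$ of $\sigma_{(n-r)/2}$ (not its $\diamond$-twin $tD_0t$) contributes to the $W_n$-class of $\sigma_{n/2}$. Applying Lemma~\ref{solveD1}(c) in $W_{n-r}$, using $\varepsilon_r(\tau)=(-1)^{r/2}$, and collecting the centraliser orders from Remark~\ref{speciale}, the LHS collapses to $(-1)^{n/2}\,2^{n/2}\binom{n/2}{r/2}\chi^{\alpha'}(1)$.

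Comparing, $\sum_\alpha(a_\alpha-b_\alpha)\chi^\alpha(1) = \binom{n/2}{r/2}\chi^{\alpha'}(1)$; by Young's rule for $\SG_{n/2}$ the right-hand side equals $\sum\chi^\alpha(1)$ where the sum runs over $\alpha\vdash n/2$ obtained from $\alpha'$ by a vertical-strip addition of size $r/2$. Combined with the bound $a_\alpha-b_\alpha\leq a_\alpha+b_\alpha\leq 1$ (with equality to $1$ possible only in the vertical-strip case), this forces $a_\alpha-b_\alpha=1$ there, hence $a_\alpha=1$ and $b_\alpha=0$, and $a_\alpha=b_\alpha=0$ otherwise, as required. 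The main technical obstacle is the Frobenius computation, in particular the careful bookkeeping around the two $W_{n-r}$-classes $D_0$ and $tD_0t$---exactly one of which fuses into the class of $\sigma_{n/2}$ in $W_n$, the other fusing instead into $tC_0t$, outside the support of our trace.
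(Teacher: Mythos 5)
Your proof is correct and follows essentially the same strategy as the paper: evaluate the difference of the two induced characters at the special element $\sigma_{n/2}$, reduce both sides via Lemma~\ref{solveD1}(c) and the Frobenius formula (with the observation that only one of the two $W_{n-r}$-classes $D_0, tD_0t$ fuses into $C_0$), and close with the inequality-forcing-equality argument using Pieri's rule for $\SG_{n/2}$. The only deviation is that you re-derive the base decomposition $a_\alpha+b_\alpha\leq 1$ through Clifford theory and the type $B_n$ Pieri rule, whereas the paper simply cites \cite[Prop.~6.4.9]{gepf} for this; both are valid.
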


\begin{proof} By \cite[Prop.~6.4.9]{gepf} (and its proof), we already know
that 
\begin{align*}
\Ind_{W'}^{W_n}\bigl(\chi^{[\alpha',+]} \boxtimes \varepsilon_r\bigr)&=
\sum_{\alpha} \chi^{[\alpha,\mu_\alpha]}\quad+\quad\mbox{``further terms''},
\\\Ind_{W'}^{W_n}\bigl(\chi^{[\alpha',-]} \boxtimes \varepsilon_r\bigr)&=
\sum_{\alpha} \chi^{[\alpha,-\mu_\alpha]}\quad+\quad\mbox{``further terms''},
\end{align*}
where the sums run over all partitions $\alpha \vdash n/2$ as above and
where $\mu_\alpha\in \{\pm 1\}$. So it remains to determine the signs
$\mu_\alpha$. First note that the two ``further terms'' must be equal
since the above induced characters are conjugate to each other under~$t$. 
Hence, we have 
\[ \Ind_{W'}^{W_n}\bigl(\chi^{[\alpha',+]} \otimes \varepsilon_r\bigr)-
\Ind_{W'}^{W_n}\bigl(\chi^{[\alpha',-]} \otimes \varepsilon_r\bigr)=
\sum_{\alpha} \bigl(\chi^{[\alpha,\mu_\alpha]}-
\chi^{[\alpha,-\mu_\alpha]} \bigr)\] 
where the sum runs over all partitions $\alpha \vdash n/2$ as above. To
determine the signs, we evaluate both sides of this identity on the special 
element $\sigma_{n/2}$. Let $C_0$ denote the conjugacy class of 
$\sigma_{n/2}$. We have $\sigma_{n/2} \in W'$ and so $C_0 \cap W'\neq
\varnothing$; furthermore, $C_0 \cap W'$ can only contain elements $w \in W'$ 
such that $w,twt$ are not conjugate (see Remark~\ref{speciale}). 
Consequently, $C_0\cap W'$ is just the conjugacy class of $W'$ containing 
$\sigma_{n/2}$. Now note that 
\[ \sigma_{n/2}=\sigma_{(n-r)/2} \times s_{n-r+1} s_{n-r+3} \cdots 
s_{n-1} \in W'=W_{n-2}\times H_r.\]
This yields 
\[ (\chi^{[\alpha',\pm]} \boxtimes \varepsilon_r)
(\sigma_{n/2})= \chi^{[\alpha',\pm]}(\sigma_{(n-r)/2}) 
\varepsilon_r(s_{n-r+1}s_{n-r+3} \cdots s_{n-1})= (-1)^{r/2}
\chi^{[\alpha',\pm]} (\sigma_{(n-r)/2})\]
and so 
\[ \Ind_{W'}^{W_n}\bigl(\chi^{[\alpha',\pm]}\boxtimes \varepsilon_r\bigr)
(\sigma_{n/2})=(-1)^{r/2}\,\frac{|C_{W_n}(\sigma_{n/2})|}{|C_{W'}
(\sigma_{n/2})|} \chi^{[\alpha',\pm]}(\sigma_{(n-r)/2}).\]
Furthermore, by the formulae in Remark~\ref{speciale}, we have 
\[\frac{|C_{W_n}(\sigma_{n/2})|}{|C_{W'}(\sigma_{(n-2)/2})|}=[\SG_{n/2}:
\SG_{(n-r)/2} \times \SG_{r/2}] \, 2^{r/2}.\]
Thus, using also Lemma~\ref{solveD1} (applied to $W_{n-r}$), we conclude
that 
\begin{align*}
\Ind_{W'}^{W_n}\bigl(\chi^{[\alpha',+]}\otimes \varepsilon_r\bigr)&
(\sigma_{n/2}) -\Ind_{W'}^{W_n}\bigl(\chi^{[\alpha',-]}\otimes \varepsilon_r
\bigr)(\sigma_{n/2})\\
&=(-1)^{n/2} \, 2^{n/2}\,[\SG_{n/2}:\SG_{(n-r)/2} \times \SG_{r/2}]\, 
\chi^{\alpha'}(1).
\end{align*}
On the other hand, Lemma~\ref{solveD1} applied to $W_n$ yields that 
\[\sum_{\alpha} \bigl(\chi^{[\alpha,\mu_\alpha]}(\sigma_{n/2})-
\chi^{[\alpha,-\mu_\alpha]}(\sigma_{n/2})\bigr)=(-1)^{n/2}\, 2^{n/2}
\sum_{\alpha} \mu_\alpha\, \chi^{\alpha}(1),\]
where the sum runs over all partitions $\alpha \vdash n/2$ whose 
Young diagram is obtained from that of $\alpha'$ by adding $r/2$ boxes,
with no two boxes in the same row. Now, by ``Pieri's Rule'' for the 
characters of $\SG_{n/2}$ (see \cite[6.1.7]{gepf}), we have 
\[ [\SG_{n/2}:\SG_{(n-r)/2} \times \SG_{r/2}]\, \chi^{\alpha'}(1)=
\sum_\alpha \chi^\alpha(1)\]
where the sum runs over all $\alpha$ as above. Hence, we conclude that
\begin{multline*}
(-1)^{n/2}\Bigl(\Ind_{W'}^{W_n}\bigl(\chi^{[\alpha',+]}\bigr)(\sigma_{n/2})
-\Ind_{W'}^{W_n}\bigl(\chi^{[\alpha',-]}\bigr)(\sigma_{n/2})\Bigr)=
2^{n/2} \Bigl(\sum_{\alpha} \chi^{\alpha}(1)\Bigr) \\ \geq 2^{n/2} 
\Bigl(\sum_{\alpha} \mu_\alpha \chi^{\alpha}(1)\Bigr)=
(-1)^{n/2}\Bigl(\sum_{\alpha} \bigl(\chi^{\alpha,\mu_\alpha} (\sigma_{n/2})-
\chi^{[\alpha,-\mu_\alpha]}(\sigma_{n/2})\bigr)\Bigr).
\end{multline*}
Since the left hand side equals the right hand side, the inequality
must be an equality which means that $\mu_\alpha=1$ for all $\alpha
\vdash n/2$, as desired.
\end{proof}

Taking the special case $r=2$, we obtain the following strengthened 
version of the ``branching rule'' for type $D_n$.

\begin{cor} \label{solveD2a} Assume that $n \geq 2$ is even. Consider the
parabolic subgroup $W'=W_{n-2}\times H_2$ where $W_{n-2}=\langle
u,s_1,\ldots,s_{n-3}\rangle$ (type $D_{n-2}$) and $H_2=\langle s_{n-1}
\rangle$. Let $\alpha' \vdash (n-2)/2$ and denote by $\varepsilon_1$
the sign character on the factor $H_2$. Then
\[ \Ind_{W'}^{W_n}\bigl(\chi^{[\alpha',+]} \boxtimes \varepsilon_1\bigr)=
\sum_{\alpha} \chi^{[\alpha,+]} \quad+\quad  \mbox{``further terms''},\]
where the sum runs over all partitions $\alpha \vdash n/2$ such that
$\alpha$ is obtained by increasing one part of $\alpha'$ by~$1$; the
expression ``further terms'' stands for a sum of various $\chi \in
\Irr(W_n)$ which can be extended to $W_n$. In particular,
\[ \Big\langle \Ind_{W'}^{W_n}\bigl(\chi^{[\alpha',+]}\boxtimes
\varepsilon_1\bigr),\chi^{[\alpha,-]} \Big\rangle_{W_n}=0 \qquad 
\mbox{for all $\alpha \vdash n/2$}.\]
\end{cor}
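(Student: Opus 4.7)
The plan is to deduce this corollary immediately from Proposition~\ref{solveD2} by specialising to $r=2$. First I would match up the setup: with $r=2$, the factor $H_r=\langle s_{n-r+1},\ldots,s_{n-1}\rangle$ of $W'$ in the proposition becomes $H_2=\langle s_{n-1}\rangle$, and the sign character $\varepsilon_r$ on this factor becomes $\varepsilon_1$, so the induced character in the proposition is exactly the one appearing in the corollary.

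Next I would translate the combinatorial indexing. With $r/2=1$, the sum in the proposition runs over partitions $\alpha\vdash n/2$ whose Young diagram is obtained from that of $\alpha'$ by adding a single box; the condition ``no two boxes in the same row'' is vacuous when only one box is added. But adding one box to the Young diagram of $\alpha'$ is exactly the same as increasing one part of $\alpha'$ by~$1$ (allowing $\alpha'$ to be padded with trailing zeros, so that a brand new part of size~$1$ is also permitted). This matches the indexing set in the corollary, and the first displayed formula is immediate.

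For the ``In particular'' statement, I would observe that by definition the ``further terms'' in Proposition~\ref{solveD2} are a sum of irreducible characters of $W_n$ that extend to $\tW_n$; these are exactly the $\diamond$-invariant characters, i.e.\ those of the form $\chi^{[\beta,\gamma]}$ with $\beta\neq\gamma$ (see Example~\ref{typeD} and Remark~\ref{dchar}). On the other hand, Clifford theory applied to the restriction $\tchi^{(\alpha,\alpha)}|_{W_n}=\chi^{[\alpha,+]}+\chi^{[\alpha,-]}$ shows that $\chi^{[\alpha,+]}$ and $\chi^{[\alpha,-]}$ are swapped by $\diamond$ and therefore do \emph{not} extend to $\tW_n$. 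Hence $\chi^{[\alpha,-]}$ cannot appear among the ``further terms''; and it does not appear in the main sum either, which contains only characters of the form $\chi^{[\alpha,+]}$. Taking the inner product with $\chi^{[\alpha,-]}$ thus yields $0$, as required.

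In this proposal the real work is already carried out in Proposition~\ref{solveD2}; there is no serious additional obstacle. The only point that requires a brief verification is the Clifford-theoretic observation that the ``$\pm$-type'' characters do not extend to $\tW_n$, so that they cannot be hidden inside the ``further terms''; this is what makes the vanishing of $\langle \Ind_{W'}^{W_n}(\chi^{[\alpha',+]}\boxtimes\varepsilon_1),\chi^{[\alpha,-]}\rangle_{W_n}$ a clean consequence rather than merely a statement about the explicit summands.
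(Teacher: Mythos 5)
Your proof is correct and is essentially the paper's own argument: the paper deduces the corollary directly by setting $r=2$ in Proposition~\ref{solveD2}, and the ``In particular'' part follows exactly as you describe, since the ``further terms'' are by definition $\diamond$-invariant (extendable to $\tW_n$) while $\chi^{[\alpha,\pm]}$ are swapped by $\diamond$ and hence not extendable. (Note that ``extended to $W_n$'' in the statement is a typo for ``extended to $\tW_n$'', consistent with the proposition.)
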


Finally, we are able to describe the decomposition of Kottwitz' characters
$\kott_w^1$ and $\kott_w^\diamond$ for $W_n$ into irreducible 
characters. 

\begin{abschnitt} \label{kottspecial} Assume that $n \geq 2$ is even. 
Consider the element $\sigma_{n/2} \in W_n$ in Remark~\ref{speciale}.
Let $\kott_{\sigma_{n/2}}^1$ be the character of the split version of 
Kottwitz' involution module for $W_n$; see Remark~\ref{koco1}. By 
\cite[\S 3.3]{kottwitz}, we have 
\[\kott_{\sigma_{n/2}}^1=\sum_{\alpha \vdash n/2} \chi^{[\alpha,\nu_\alpha]} 
\qquad \mbox{and} \qquad \kott_{t\sigma_{n/2}t}^1=\sum_{\alpha 
\vdash n/2} \chi^{[\alpha,-\nu_\alpha]}\]
where $\nu_\alpha\in \{\pm 1\}$ for all $\alpha \vdash n/2$. But note that 
these signs have not been determined in \cite{kottwitz}. Using an inductive 
argument based on Corollary~\ref{solveD2a}, it is shown in 
\cite[Prop.~7.4]{boge} that  $\nu_\alpha=1$ for all $\alpha \vdash n/2$.
Thus, we have 
\[\kott_{\sigma_{n/2}}^1=\sum_{\alpha \vdash n/2} \chi^{[\alpha,+]} 
\qquad \mbox{and} \qquad \kott_{t\sigma_{n/2}t}^1=\sum_{\alpha \vdash n/2} 
\chi^{[\alpha,-]}.\]
\end{abschnitt}

\begin{abschnitt} \label{clbn} A complete set of representatives of the
conjugacy classes of involutions in $\tW_n$ is given as follows. Let
$l,j$ be non-negative integers such that $l+2j\leq n$. Then set
\[ \sigma_{l,j}:=t_1\cdots t_ls_{l+1}s_{l+3} \cdots s_{l+2j-1}\in \tW_n,\]
where $t_1:=t$ and $t_i:=s_{i-1}t_{i-1}s_{i-1}$ for $2 \leq i \leq n$.
Note that $\sigma_{l,j}$ is the longest element in a parabolic subgroup
of $\tW_n$ of type $B_l \times A_1 \times \ldots \times A_1$, where the
$A_1$ factor is repeated $j$ times. In particular, $\sigma_{l,j}$ is
central in this parabolic subgroup and $\sigma_{l,j}$ has minimal length 
in its conjugacy class; see also \cite[3.2.10]{gepf}. Every involution
in $\tW_n$ is conjugate to exactly one of the elements $\sigma_{l,j}$. 
Note that $\sigma_{l,j} \in W_n$ if and only if $l$ is even. Furthermore,
if $n$ is odd, then every involution in $W_n$ is conjugate (in $W_n$) to 
exactly one of the elements $\sigma_{l,j}$ where $l$ is even. Assume now 
that $n$ is even. Then 
\[ \sigma_{0,n/2}=s_1s_3 \cdots s_{n-1}\in W_n\]
is the element already introduced in Remark~\ref{speciale}.
Let $C_0$ be the conjugacy class of $\sigma_{0,n/2}$ in $W_n$. Recall
from Remark~\ref{speciale} that $C_0^\diamond \neq C_0$ and that $\{C_0,
C_0^\diamond\}$ is the only pair of conjugacy classes of involutions 
in $W_n$ with this property. 
\end{abschnitt}

\begin{abschnitt}  \label{symD} There is an alternative labelling of
$\Irr(\tW_n)$ in terms of Lusztig's ``symbols''. To describe this in more 
detail, let $(\alpha,\beta) \vdash n$ and consider the corresponding
irreducible character $\tchi=\tchi^{(\alpha, \beta)} \in \Irr(\tW_n)$;
see Example~\ref{typeB}. Choose $m \geq 1$ such that we can write
\[ \alpha=(0 \leq \alpha_1 \leq \alpha_2 \leq \ldots \leq \alpha_m)
\qquad \mbox{and} \qquad \beta=(0 \leq \beta_1 \leq \beta_2 \leq \ldots 
\leq \beta_m).\]
As in \cite[\S 1]{LuDn}, \cite[\S 4.18]{LuB}, we have a corresponding 
``symbol'' with two rows of equal length
\[ \Lambda_m(\tchi):=\binom{\lambda_1, \lambda_2,\ldots,\lambda_{m}}{\mu_1,
\mu_2,\ldots,\mu_m} \]
where $\lambda_i:=\alpha+i-1$ and $\mu_i=\beta_i+i-1$ for $1\leq i \leq m$. 
We associate with $\tchi$ the following invariants. First, we set 
\begin{align*}
c(\tchi)& = c(\alpha,\beta) :=\mbox{ number of $i \in \{1,\ldots,m\}$ such 
that $\mu_i \not\in \{\lambda_1,\lambda_2,\ldots,\lambda_m\}$},\\
d_0(\tchi) &=d_0(\alpha,\beta):=\beta_1+\sum_{2 \leq i \leq m} 
\sup\{\alpha_{i-1},\beta_i\}.
\end{align*}
In particular, if $\alpha=\beta$, then $c(\alpha,\alpha)=0$ and $d_0(\alpha,
\alpha)=n/2$. Next, we set 
\begin{align*}
\ab^\diamond_\tchi=\ab_{(\alpha,\beta)}^\diamond&:=\sum_{1\leq i \leq j
\leq m}\min\{\lambda_i,\lambda_j\} +\sum_{1\leq i \leq j \leq m} 
\min\{\mu_i,\mu_j\} \\ &\qquad +\sum_{1\leq i,j \leq m} 
\min\{\lambda_i,\mu_j\}- \frac{1}{6}m(m-1)(4m-5);
\end{align*}
see \cite[4.6.3]{LuB}, \cite[22.14]{lusztig}. (Note that these definitions 
do not depend on the choice of $m$.) 
\begin{itemize}
\item[(a)] We say that $\tchi$ is ``$\diamond$-special'' if $\mu_1 \leq 
\lambda_1 \leq \mu_2 \leq \lambda_2 \leq \ldots \leq \mu_m \leq \lambda_m$.
\end{itemize}
In particular, if $\tchi$ is $\diamond$-special then either $\alpha=\beta$
or $|\beta|<|\alpha|$. 
Assume now that $\alpha \neq \beta$. Then $\chi=\chi^{[\alpha, \beta]}
\in \Irr(W_n)$ and the two extensions of $\chi$ to $\tW_n$ are 
$\tchi^{(\alpha,\beta)}$ and $\tchi^{(\beta,\alpha)}$. Following Lusztig 
\cite[17.2]{cs4}, we say that $\chi^{(\alpha,\beta)}$ is the 
``{\em preferred extension}'' of $\chi$ if the symbol $\Lambda_m(\tchi)$ 
has the following property: the smallest entry which appears in only one 
row appears in the lower row. Using also \cite[4.6.4]{LuB}, one easily 
verifies that:
\begin{itemize}
\item[(b)] $\tchi$ is $\diamond$-special if and only if 
$\ab_{\tchi}^\diamond=\bb_\chi$ and $\tchi$ is the preferred 
extension of $\chi$.
\end{itemize}
This property played a role in the proof of the main result of 
\cite{gema} for type $D_n$, and it will also play a role in the proof
of Proposition~\ref{prefspec} below. 
\end{abschnitt}

\begin{prop}[Kottwitz \protect{\cite[\S 3.3 and \S 5.4]{kottwitz}}]
\label{kottD1} Let $l,j$ be non-negative integers such that $l+2j\leq n$; 
consider the coresponding involution $\sigma_{l,j} \in \tW_n$. Thus, if
$l$ is even, then $\sigma_{l,j} \in W_n$ in an ``ordinary'' involution; 
on the other hand, if $l$ is odd, then $t\sigma_{l,j}\in W_n$ is
a $\diamond$-twisted involution. Let 
\[ {\kott}_{l,j}':=\left\{\begin{array}{cl} \kott_{\sigma_{l,j}}^1 &
\qquad \mbox{if $l$ is even},\\ \kott_{t\sigma_{l,j}}^\diamond & \qquad
\mbox{if $l$ is odd},\end{array}\right.\]
Then the following hold, where $(\alpha,\beta)$ is any pair of partitions
such that $|\alpha|+|\beta|=n$ and, as before, $\chi^{[\alpha,\beta]}$ 
denotes the restriction of $\tchi^{(\alpha,\beta)}\in \Irr(\tW_n)$ to $W_n$.
\begin{itemize}
\item[(a)] We have $\langle {\kott}_{l,j}',\chi^{[\alpha,\beta]}
\rangle_{W_n}=0$ unless $\tchi^{(\alpha,\beta)}$ is $\diamond$-special
and $|\beta|=j$.
\item[(b)] If $\tchi^{(\alpha,\beta)}$ is $\diamond$-special and $|\beta|
=j$, then
\[ \langle {\kott}_{l,j}',\chi^{[\alpha,\beta]}\rangle_{W_n}= 
\binom{c(\alpha,\beta)}{j+l-d_0(\alpha,\beta)} \qquad \mbox{(binomial 
coefficient)};\] 
in particular, the multiplicity is zero unless $d_0(\alpha,\beta)\leq 
j+l \leq d_0(\alpha,\beta) +c(\alpha,\beta)$.
\item[(c)] If $j<n/2$, then $\langle \kott_{l,j}',\chi^{[\alpha,\beta]}
\rangle_{W_n}=0$ unless $\alpha\neq \beta$; if $2j=n$, then $\langle 
\kott_{l,j}',\chi^{[\alpha,\beta]} \rangle_{W_n}=0$ unless $\alpha=
\beta$. 
\end{itemize}
\end{prop}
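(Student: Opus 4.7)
The plan is to express $\kott_{l,j}'$ as an induced character via Remark~\ref{minl} and then compute its inner products with the irreducible characters of $W_n$ by Frobenius reciprocity combined with the branching rule in type $B$. The split ($l$ even) and quasi-split ($l$ odd) cases can be treated in parallel by working in the semidirect product $\tW_n$: in both cases $\kott_{l,j}'$ is an induced character from the subgroup $C_{\tW_n}(\sigma_{l,j})\cap W_n$ up to $W_n$, with inducing character the appropriate restriction of Kottwitz's sign $\varepsilon_{\sigma_{l,j}}$ attached to the involution $\sigma_{l,j}\in\tW_n$ (see Remark~\ref{semidir}).

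Applying Remark~\ref{minl} to $\sigma_{l,j}\in\tW_n$, one has $C_{\tW_n}(\sigma_{l,j})=Y\ltimes \tW_I$, where $\tW_I$ is the standard parabolic subgroup of type $B_l\times A_1^{\,j}$ whose longest (central) element is $\sigma_{l,j}$, and $\varepsilon_{\sigma_{l,j}}$ takes the value $(-1)^{\ell(w)}$ on $yw$ for $y\in Y$, $w\in\tW_I$. A direct analysis of the $B_n$-diagram identifies $C_{\tW_n}(\sigma_{l,j})$, as an abstract group, with $\tW_l\times \tW_j\times \tW_{n-l-2j}$, where the middle hyperoctahedral factor arises as the centraliser of $s_{l+1}s_{l+3}\cdots s_{l+2j-1}$ inside the symmetric group $\langle s_{l+1},\ldots, s_{l+2j-1}\rangle$. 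Under this identification, Kottwitz's sign decomposes as $\varepsilon_{\tW_l}\boxtimes\delta_j\boxtimes 1_{\tW_{n-l-2j}}$, where $\delta_j$ denotes the linear character of $\tW_j$ that is trivial on the ``permutation'' generators and equals $-1$ on the ``sign'' generator.

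Frobenius reciprocity now reduces $\langle \kott_{l,j}',\chi^{[\alpha,\beta]}\rangle_{W_n}$ to the multiplicity of $\varepsilon_{\tW_l}\boxtimes\delta_j\boxtimes 1_{\tW_{n-l-2j}}$ in the restriction $\tchi^{(\alpha,\beta)}|_{\tW_l\times \tW_j\times \tW_{n-l-2j}}$ (plus a Clifford-theoretic compatibility check as in Remark~\ref{dchar} for the passage between $\tW_n$- and $W_n$-characters). Expanding this restriction via the Littlewood--Richardson rule in type $B$ (Example~\ref{typeB}) and translating to Lusztig's symbols as in~\ref{symD}, one sees that the $\delta_j$-component on the middle factor forces $|\beta|=j$; that pairing with $\varepsilon_{\tW_l}$ on the $B_l$-piece forces the minimal $\bb$-value, which by property \ref{symD}(b) is equivalent to $\tchi^{(\alpha,\beta)}$ being $\diamond$-special; and that the trivial character on the tail $\tW_{n-l-2j}$ simply distributes the remaining entries of $\Lambda_m(\tchi^{(\alpha,\beta)})$. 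The number of admissible ways to do this is precisely $\binom{c(\alpha,\beta)}{j+l-d_0(\alpha,\beta)}$.

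The main obstacle is this last step --- matching the combinatorial count produced by the Littlewood--Richardson expansion to the explicit binomial coefficient in terms of the symbol invariants $c(\alpha,\beta)$ and $d_0(\alpha,\beta)$. This identification is the core computation in \cite[\S\S 3.3, 5.4]{kottwitz}, and an alternative proof proceeds by induction on $n$ using the strengthened branching rule of Corollary~\ref{solveD2a}. Finally, part~(c) follows at once from (b): if $j<n/2$, the requirement $|\beta|=j<n/2\le|\alpha|$ precludes $\alpha=\beta$; if $2j=n$, then $|\alpha|=|\beta|=n/2$ and the $\diamond$-special inequality $\mu_1\le \lambda_1\le\mu_2\le \ldots\le \mu_m\le \lambda_m$ on the symbol forces $\alpha=\beta$.
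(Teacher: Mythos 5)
Your overall plan mirrors what actually happens: the proposition is essentially a restatement of Kottwitz's own character computations, and both you and the paper defer the core combinatorial step (producing the binomial coefficient $\binom{c(\alpha,\beta)}{j+l-d_0(\alpha,\beta)}$) to \cite[\S\S 3.3, 5.4]{kottwitz}. The paper's own proof is one short paragraph: it cites Kottwitz for $j<n/2$ and treats the remaining case $j=n/2$, $l=0$ directly by pairing $\kott^1_{\sigma_{n/2}}$ (as pinned down in (\ref{kottspecial})) against $\chi^{[\alpha,\alpha]}=\chi^{[\alpha,+]}+\chi^{[\alpha,-]}$. Your deduction of part (c) from (a)--(b) is clean and correct: $\diamond$-special implies $\beta_i\le\alpha_i$ for all $i$, so $|\beta|\le|\alpha|$ with equality forcing $\alpha=\beta$, and $|\beta|=j$ then decides both subcases.

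That said, the scaffolding you build around the Kottwitz citation contains a genuine error. You assert that $C_{\tW_n}(\sigma_{l,j})\cong\tW_l\times\tW_j\times\tW_{n-l-2j}$, where the middle factor is claimed to be the centraliser of $s_{l+1}s_{l+3}\cdots s_{l+2j-1}$ inside the symmetric group $\langle s_{l+1},\ldots,s_{l+2j-1}\rangle$. But the centraliser is taken in $\tW_n$, not in $\SG_n$, so the sign changes $t_{l+1}t_{l+2},\, t_{l+3}t_{l+4},\ldots$ on the transposed blocks also commute with $\sigma_{l,j}$ and are missing from your description; the middle factor should have order $4^j j!$, not $2^j j!$. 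You then need to pass to $C_{\tW_n}(\sigma_{l,j})\cap W_n$ (an index-2 subgroup in general), which you do not do explicitly. Kottwitz's sign character $\varepsilon_{\sigma_{l,j}}$ is, as Remark~\ref{minl}(c) shows, $(-1)^{\ell(w)}$ on the parabolic part $\tW_I$ and trivial on the complement $Y$, and it is only after identifying this parabolic decomposition and the kernel of $\varepsilon_{\sigma_{l,j}}$ that one gets a tractable inducing datum; your $\varepsilon_{\tW_l}\boxtimes\delta_j\boxtimes 1$ description cannot be correct as stated because the underlying group is not the one you name. Finally, the claim that Corollary~\ref{solveD2a} gives ``an alternative proof'' of the binomial coefficient formula overstates its role: in the paper that corollary (via \cite[Prop.~7.4]{boge}) is used only to resolve the $\pm$ ambiguity in $\kott^1_{\sigma_{n/2}}$ and $\kott^1_{t\sigma_{n/2}t}$, not to rederive the multiplicities of Proposition~\ref{kottD1}.
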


\begin{proof} First assume that $j<n/2$; then $\sigma_{l,j}$ is
not the special element $\sigma_{n/2}$ in Remark~\ref{speciale}. 
The desired multiplicities in (a) and (b) are explicitly determined in 
\cite[\S 3.3 and \S 5.4]{kottwitz}. A similar argument applies to the
case $j=n/2$ and $l=0$. Using Example~\ref{typeD},
we obtain
\[ \langle {\kott}_{l,j}',\chi^{[\alpha,\alpha]}\rangle_{W_n}= 
\langle \kott_{\sigma_{n/2}}^1,\chi^{[\alpha,+]}+\chi^{[\alpha,-]}
\rangle_{W_n}=1\]
in this case, as already mentioned in (\ref{kottspecial}).
\end{proof}

In order to prove Kottwitz' conjecture, our task now is to find similar
formulae for the numbers of elements in the intersections of left
cells with ordinary or $\diamond$-conjugacy classes of involutions 
in $W_n$.

\begin{rem} \label{dperm1} Let $\cc$ be a two-sided cell of $W_n$
such that $\cc^\diamond\neq \cc$. By \cite[4.6.10, 5.25]{LuB}, this
can only happen if $n$ is even; in this case, we have
\[\Irr(W_n\mid \cc)=\{\chi\} \qquad \mbox{where} \qquad \chi=
\chi^{[\alpha,\pm]} \mbox{ for some $\alpha \vdash n/2$}.\]
Note that this implies that $[\Gamma]_1=\chi$ for every left 
cell $\Gamma\subseteq \cc$. (More precisely, the above statement
on $\Irr(W_n\mid \cc)$ implies that $[\Gamma]_1$ is a multiple of $\chi$;
but then \cite[12.17]{LuB} shows that $[\Gamma]_1$ is multiplicity-free.)
Now let $\Cd$ be a $\diamond$-conjugacy class of $\diamond$-twisted 
involutions in $W_n$.  In Remark~\ref{dperm}, we have seen that 
$\Cd\cap \Gamma=\varnothing$. We can now also show that 
\[ \langle \kott_w^\diamond, [\Gamma]_1\rangle_{W_n}=0 \qquad \mbox{for 
$w \in \Cd$}.\]
Indeed, we are in the case where $l\neq 1$ is odd in Proposition~\ref{kottD1}.
Then the formulae show that all irreducible constituents of 
$\kott_w^\diamond$ are of the form $\chi^{[\alpha,\beta]}$ where
$\alpha \neq \beta$. Hence, we must have $\langle \kott_w^\diamond, 
[\Gamma]_1\rangle_{W_n}=0$ since $[\Gamma]_1=\chi^{[\alpha,\pm]}$ for
some $\alpha \vdash n/2$.
\end{rem}

\section{The extended Iwahori--Hecke algebra} \label{sec:basic}

The aim of this section is to establish certain positivity results for
leading coefficients of character values of Iwahori--Hecke algebras in 
the quasi-split case. The analogous results in the split case where shown 
by Lusztig \cite[7.1]{LuB}, \cite[3.14]{lulc}. The arguments are similar
in the quasi-split case but some additional care is needed in choosing
the correct extensions of the characters in $\Irr^\diamond(W)$. Then
Proposition~\ref{invDn} formulates the main applications to type $D_n$.
We shall need a number of results from Lusztig's book \cite{LuB} and 
\cite{lulc} so, as in Remark~\ref{exthec0}, we assume that $W$ is a Weyl 
group and that $\diamond$ is ``ordinary'' in the sense of \cite[3.1]{LuB}. 
Let $\tilde{W}=\langle W,\gamma\rangle$ be the semidirect product as in
Remark~\ref{semidir}. 

\begin{abschnitt} \label{ainv} We begin by recalling some results 
concerning the representation theory of the (split semisimple) algebra 
$\cH_K=K \otimes_A \cH$, where $K=\Q(v)$ is the field of fractions of $A$. 
(See (\ref{defkl}) for the definition of $\cH$.)
Via the specialisation $v \mapsto 1$, we obtain a canonical bijection
\[ \Irr(W) \leftrightarrow \Irr(\cH_K), \qquad \chi \leftrightarrow \chi_v;\]
see \cite[3.3]{LuB}. We have $\chi_v(T_w) \in A$ for all $w \in W$.
For $\chi \in \Irr(W)$, we define 
\[ \ab_\chi:=\min\{i \geq 0 \mid v^i \chi_v(T_w) \in \Z[v] \mbox{ for all
$w \in W$}\}.\]
Then there are well-defined integers $c_{w,\chi} \in \Z$ such that
\[ v^{\ab_\chi}\chi_v(T_w) \equiv (-1)^{\ell(w)} c_{w,\chi} \, \bmod
v \Z[v] \qquad \mbox{for all $w \in W$}.\] 
These are Lusztig's ``leading coefficients of character values''; see 
\cite[Chap.~5]{LuB}, \cite{lulc}. Note that the sum of all terms $c_{w,
\chi}^2$ ($w \in W$) is a strictly positive number. Consequently, there 
is a well-defined positive rational number $f_{\chi}$ such that 
\[ \sum_{w \in W} c_{w,\chi}^2=\chi(1)\, f_\chi.\]
In fact, it turns out that $f_{\chi}>0$ is an integer; see \cite[4.1]{LuB}. 
We have the following relation; see \cite[Cor.~5.8]{LuB}: 
\[ \sum_{w \in \Gamma} c_{w,\chi}^2=f_\chi\, \langle [\Gamma]_1,
\chi\rangle_W \qquad \mbox{for any left cell $\Gamma$ of $W$}.\]
In particular, if $\cc$ is a two-sided cell of $W$ and $c_{w,\chi} 
\neq 0$ for some $w \in \cc$, then $\chi \in \Irr(W\mid \cc)$. 
We will now have to consider ``quasi-split'' versions of these constructions.
\end{abschnitt}

\begin{abschnitt} \label{exthec1} 
As in \cite[3.3]{LuB}, we can define an extended algebra $\tH$ with 
an $A$-basis $\{T_{\sigma} \mid \sigma \in \tW\}$. For this 
purpose, we define a function $L \colon \tW_n \rightarrow \Z$ by
$L(\gamma^i w)=\ell(w)$ for any $w \in W_n$ and $i=0,1$; in particular,
$L$ is an extension of the length function $\ell$ on $W$. Note that
$L(\gamma)=0$ and $L(\gamma w)=L(w^{-1} \gamma)$ for all $w \in W$. Then 
the multiplication in $\tH$ is given as follows: 
\begin{align*}
T_{\sigma}T_{\sigma'}'&= T_{\sigma \sigma'} \quad \mbox{if $\sigma,
\sigma'\in \tW$ are such that $L(\sigma \sigma')=L(\sigma)+
L(\sigma')$},\\ T_s^2 &= T_1+(v-v^{-1})T_s \quad 
\mbox{if $s \in S$}. \end{align*}
Thus, $\cH$ can be identified with the $A$-submodule of $\tH$ spanned
by all $T_w$ ($w \in W$); note also that $\tH=\cH$ if $\diamond=1$. Let 
$\{\bC_w \mid w \in W\}$ be the Kazhdan--Lusztig basis of $\cH$ as defined
in (\ref{defkl}). Following \cite[3.1(a)]{lulc}, we extend this to a basis 
of $\tH$ by setting 
\[ \bC_{\gamma w}=T_{\gamma w}+\sum_{y \in W, y<w} (-1)^{\ell(w)-\ell(y)}
\,v^{\ell(w)-\ell(y)} \,P_{y,w} T_{\gamma y} \qquad (w \in W).\]
Correspondingly, we also have notions of left, right and two-sided cells
in $\tW$; in order to avoid any confusion with the analogous notions for $W$ 
itself, we shall call them the left, right and two-sided $L$-cells 
in $\tW$. One easily sees that we have the following relations between 
the cells in $W$ and the $L$-cells in $\tW$. 
\begin{itemize}
\item[(a)] If $\Gamma$ is a left cell in $W$, then $\Gamma^+:=
\Gamma \cup \gamma \Gamma$ is a left $L$-cell of $\tW$. All left 
$L$-cells of $\tW$ arise in this way.
\item[(b)] If $\Gamma, \Gamma^+$ are as in (a), then the
corresponding characters of $W, \tW$ are related by
\[ [\Gamma^+]_1=\Ind_{W}^{\tW}\bigl([\Gamma]_1\bigr).\]
\item[(c)] If $\cc$ is a two-sided cell of $W$, then $\cc^+:=\cc \cup 
\gamma \cc \cup \cc \gamma \cup \gamma \cc \gamma$ is a two-sided 
$L$-cell of $\tW$. All two-sided $L$-cells of $\tW$ arise 
in this way.
\end{itemize}
(See \cite[\S 16]{cs3}, \cite[3.1]{lulc}; a related setting is 
considered in \cite[2.4.9]{geja}.)
\end{abschnitt}

\begin{abschnitt} \label{exthec2} We extend the constructions in 
(\ref{ainv}) to $\tH$. Let $K=\Q(v)$ be the field of fractions
of $A$.  Then $\tH_K:=K \otimes_A \tH$ is a split semisimple algebra.
Via the specialisation $v \mapsto 1$, we obtain a canonical bijection
\[\Irr(\tW) \leftrightarrow \Irr(\tH_K),\qquad \tchi \leftrightarrow
\tchi_v\]
which is compatible with the restriction of characters from $\tW$ to $W$
on the one side, and with the restriction of characters from $\tH$ to
$\cH$ on the other side; see \cite[3.3]{LuB}. We have $\tchi_v
(T_\sigma) \in A$ for all $\sigma \in \tW$. For $\tchi \in \Irr(\tW)$, we
define 
\[ \ab_\tchi:=\min\{i \geq 0 \mid v^i \chi_v(T_\sigma) \in 
\Z[v] \mbox{ for all $\sigma \in \tW$}\}.\]
Then there are well-defined integers $c_{\sigma,\tchi} \in \Z$ such that
\[ v^{\ab_\tchi}\tchi_v(T_\sigma) \equiv (-1)^{L(\sigma)} 
c_{\sigma,\tchi} \, \bmod v \Z[v] \qquad \mbox{for all $\sigma \in \tW$}.\] 
Again the sum of all terms $c_{\sigma,\tchi}^2$ ($\sigma \in \tW$) is a 
strictly positive number. Consequently, there is a well-defined positive 
rational number $f_{\tchi}$ such that 
\[ \sum_{\sigma \in \tW} c_{\sigma,\tchi}^2=\tchi(1)\, f_\tchi.\]
The relation between $\ab_{\tchi}$, $f_\tchi$ and the analogous invariants 
for the ireducible characters of $W$ are given as follows; see 
\cite[Prop.~2.4.14]{geja}. Let $\tchi \in \Irr(\tW)$ and $\chi \in 
\Irr(W)$ be such that $\chi$ occurs in the restriction of $\tchi$ to $W$.
Then 
\begin{equation*}
\ab_{\tchi}=\ab_\chi \qquad \mbox{and}\qquad \tchi(1)\,
f_{\tchi} =2\,\chi(1)\,f_\chi.\tag{a}
\end{equation*}
Assume now that $\tchi$ is an extension of some  $\chi=\chi^\diamond
\in \Irr(W)$. Then we have the following relation with the
left cells of $W$; see \cite[Cor.~5.8]{LuB}: 
\begin{equation*}
\sum_{w \in \Gamma} c_{\gamma w,\tchi}^2=f_\chi\, \langle [\Gamma]_1,
\chi\rangle_W \qquad \mbox{for any left cell $\Gamma$ of $W$}.\tag{b}
\end{equation*}
In particular, if $\cc$ is a two-sided cell of $W$ and $c_{\gamma w,\tchi}
\neq 0$ for some $w \in W$, then $\chi \in \Irr(W\mid \cc)$.
\end{abschnitt}

\begin{rem} \label{rem31} Let $\chi \in \Irr(W)$ and assume that
$\chi^\diamond\neq \chi$. Then $\tchi:=\Ind_W^{\tW}(\chi) \in \Irr(\tW)$. 
Correspondingly, if $V$ is an $\cH_K$-module affording $\chi_v$, then 
\[ \tilde{V}=\tH_K \otimes_{\cH_K} V\] 
is a $\tH_K$-module affording $\tchi_v$; note that $\tH_K$ is free
as an $\cH_K$-module, with basis $\{T_1,T_\gamma\}$. One easily sees that 
this implies that 
\[ \tchi_v(T_w)=\chi_v(T_w)+\chi_v^\diamond(T_w) \quad \mbox{and}
\quad \tchi_v(T_{\gamma w})=0 \quad \mbox{for all $w \in W$}.\] 
Hence, we obtain that
\[ c_{w,\tchi}=c_{w,\chi}+c_{w,\chi^\diamond} \quad \mbox{and}
\quad c_{\gamma w,\tchi}=0 \quad \mbox{for all $w \in W$}.\] 
\end{rem}

\begin{abschnitt} \label{dspecial} Recall from \cite[4.1]{LuB} that 
$\chi \in \Irr(W)$ is called ``{\em special}'' if $\ab_\chi=\bb_\chi$. 
By \cite[4.14.2, 5.25]{LuB}, the sets $\Irr(W\mid \cc)$ (where $\cc\subseteq
W$ is a two-sided cell) are explicitly known; in particular, it it is known 
that each set $\Irr(W\mid \cc)$ contains a unique special character. 

Now let $\tilde{\cc}$ be a two-sided $L$-cell of $\tW$ and define
\[ \Irr(\tW\mid \tilde{\cc}):=\big\{\tchi \in \Irr(\tW) \mid \langle
\Ind_W^{\tW}(\chi),\tchi\rangle_{\tW} \neq 0 \mbox{ for some
$\chi \in \Irr(W\mid \cc)$}\big\}\]
where $\cc$ is a two-sided cell of $W$ such that $\tilde{\cc}=\cc^+$.
(One easily sees that this does not depend on the choice of $\cc$;
note that there only is a choice if $\cc\neq \cc^\diamond$.)
Using (\ref{exthec1}), we see that we obtain a partition 
\[ \Irr(\tW)=\coprod_{\tilde{\cc}} \Irr(\tW\mid \tilde{\cc})\]
where $\tilde{\cc}$ runs over all two-sided $L$-cells in $\tW$. 
We now define what it means for an irreducible character $\tchi \in 
\Irr(\tW)$ to be ``{\em $\diamond$-special}''. 
\begin{itemize}
\item[(a)] Assume that the restriction of $\tchi$ to $W$ is not irreducible.
Then we say that $\tchi$ is $\diamond$-special if $\tchi=\Ind_W^{\tW}(\chi)$ 
for some special $\chi \in \Irr(W)$. (Note that $\chi$ is special if and
only if $\chi^\diamond$ is special.)
\item[(b)] Otherwise, $\tchi$ is the extension of some $\chi \in \Irr(W)$.
In this case, we say that $\tchi$ is $\diamond$-special if $\chi$ is special
and $\tchi$ is the ``{\em preferred extension}'' of $\chi$ in the sense of
Lusztig \cite[17.2]{cs4}.
\end{itemize}
With the above definitions, each set $\Irr(\tW \mid \tilde{\cc})$ will also 
contain a unique $\diamond$-special character of $\tW$. Note also that, if 
$W$ of type $D_n$ and $\diamond$ is as in Example~\ref{typeD}, then these
definitions are consistent with those in (\ref{symD}).
\end{abschnitt}

\begin{exmp} \label{invD0} Let $W=W_n$ be of type $D_n$. Let $\cc$ be a 
two-sided cell of $W_n$ and $\chi_0 \in \Irr(W_n\mid \cc)$ be the unique 
special character. Let $\Gamma$ be a left cell contained 
in $\cc$. Then the following hold:
\begin{itemize}
\item[(a)] $[\Gamma]_1$ is multiplicity-free with exactly $f_{\chi_0}$
irreducible constituents (one of which is $\chi_0$); furthermore, $\Gamma$ 
contains exactly $f_{\chi_0}$ involutions of $W_n$. (See \cite[12.17]{LuB}.)
\end{itemize}
Now let $\diamond$ be the non-trivial graph automorphism as in 
Example~\ref{typeD}. We identify $\tilde{W}$ with a group $\tW_n$ of 
type $B_n$. First we note:
\begin{itemize}
\item[(b)] $L\colon \tW_n \rightarrow \Z$ is a weight function in the 
sense of Lusztig \cite{lusztig}. Thus, $\tH$ is the generic 
Iwahori--Hecke algebra associated with $\tW_n,L$ as in \cite{lusztig};
furthermore, the notions of left, right and two-sided $L$-cells of $\tW_n$
in (\ref{exthec1}) correspond exactly to the analogous notions in 
\cite{lusztig}. 

\item[(c)] We have $\ab_{\tchi}=\ab_{\tchi}^\diamond$ for all $\tchi \in 
\Irr(\tW_n)$. (See \cite[22.14]{lusztig}.)
\end{itemize}
Let $\tilde{\cc}$ be a two-sided $L$-cell of $\tW_n$ and $\tchi_0 
\in \Irr(\tW_n\mid \tilde{\cc})$ be the unique $\diamond$-special character.
Let $\tilde{\Gamma}$ be a left $L$-cell contained in $\tilde{\cc}$. 
Then we have, where $c(\tchi_0)$ is defined in (\ref{symD}):
\begin{itemize}
\item[(d)] $[\tilde{\Gamma}]_1$ is multiplicity-free with exactly 
$f_{\tchi_0}=2^{c(\tchi_0)}$ irreducible constituents (one of which is
$\tchi_0$); furthermore, $\tilde{\Gamma}$ contains exactly $f_{\tchi_0}$
involutions of $\tW_n$.
\end{itemize}
Indeed, there are two cases. Assume first that $\tchi_0$ is an extension 
of a special character $\chi_0 \in \Irr(W_n)$. Let $\cc$ be the left cell 
of $W_n$ such that $\chi_0 \in \Irr(W_n\mid \cc)$. By  (\ref{exthec0})(c),
we have $\Irr(W_n \mid \cc) \subseteq \Irr^\diamond(W_n)$. So (a) and 
(\ref{exthec1})(b) imply that $[\tilde{\Gamma}]_1$ is multiplicity-free 
with exactly $2f_{\chi_0}$ irreducible constituents (one of which is
$\tchi_0$). By (\ref{exthec2})(a), we have $f_{\tchi_0}=2f_{\chi_0}$.
Now assume that $\tchi_0$ is obtained by inducing a special 
$\chi_0 \in \Irr(W_n)$ to $\tW_n$. Then $n$ is even and $\chi_0=
\chi^{[\alpha,\pm]}$ for some $\alpha \vdash n/2$. Again, let $\cc$ be 
the left cell of $W_n$ such that $\chi_0 \in \Irr(W_n\mid \cc)$. Then
$[\Gamma]_1=\chi_0$ for any left cell $\Gamma \subseteq \cc$; see 
Remark~\ref{dperm1}. So (\ref{exthec1})(b) implies that $[\tilde{\Gamma}]_1=
\tchi_0=\tchi^{(\alpha,\alpha)}$ is irreducible. By (\ref{exthec2})(b), 
we have $f_{\tchi_0}=f_{\chi_0}$. In both cases, the equality $f_{\tchi_0}=
2^{c(\tchi_0)}$ follows from the explicit formula in \cite[22.14]{lusztig};
note that $c(\tchi_0)=0$ in the second case. This completes the proof of
the statement concerning the decomposition of $[\tilde{\Gamma}]_1$. To prove
the statement concerning the involutions in $\tilde{\Gamma}$, we argue as 
follows. By (b), we can apply \cite[Theorem~1.1]{myfrob} to $\tH$ which 
shows that the number of (ordinary) involutions in $\tilde{\Gamma}$ equals 
the number of irreducible constituents of $[\tilde{\Gamma}]_1$ (counting 
multiplicities). Since $[\tilde{\Gamma}]_1$ is multiplicity-free, this 
yields the desired statement.
\end{exmp}

\begin{abschnitt} \label{exthec3} Assume that $(W,S,\diamond)$ arises from 
a connected reductive algebraic group $\bG$ and a Frobenius map $F$, 
corresponding to some $\F_q$-rational structure on $\bG$. Thus, $W$ is the 
Weyl group of $\bG$ with respect to an $F$-stable maximal torus which is 
contained in an $F$-stable Borel subgroup of $\bG$; furthermore, $w \mapsto 
w^\diamond$ is the map induced by $F$ on $W$. Let $G=\bG^F$. For each 
$\chi \in \Irr^\diamond(W)$, let $R_{\tchi}$ be the corresponding 
``almost character'' of $G$ (see \cite[3.7]{LuB}), where $\tchi\in 
\Irr(\tW)$ is an extension of $\chi$. (If we choose another extension 
$\tchi'$ of $\chi$, then $R_{\tchi'}=\pm R_{\tchi}$.) Let 
\[ \Uch(G)=\{ \rho \in \Irr(G) \mid \langle R_{\tchi},\rho \rangle_G \neq 0
\mbox{ for some $\chi \in \Irr^\diamond(W)$}\}\]
be the set of {\em unipotent characters} of $G$. For any two-sided cell $\cc$
of $W$ such that $\cc^\diamond=\cc$, we denote by $\Uch(G\mid \cc)$ the set 
of all $\rho \in \Uch(G)$ such that $\langle R_{\tchi},\rho\rangle_G \neq 0$ 
for some $\chi \in \Irr(W\mid \cc)$. By the ``Disjointness Theorem'' 
\cite[6.16]{LuB}, we obtain a partition
\[ \Uch(G)=\coprod_{\cc} \Uch(G\mid \cc)\]
where $\cc$ runs over all two-sided cells of $W$ such that $\cc^\diamond=
\cc$. All the multiplicities $\langle R_{\tchi}, \rho\rangle_G$ are 
explicitly described by \cite[Main Theorem~4.23]{LuB}; this involves a 
certain Fourier matrix and a function $\Delta \colon \Uch(G) \rightarrow 
\{\pm 1\}$.
\end{abschnitt}

To state the following result, we introduce the following notation.
For $\chi \in \Irr^\diamond(W)$, we set 
\[ c_{\gamma w,\tchi}:=(-1)^{\ab_\chi+l(w)}\, c_{\gamma w,\tchi}
\qquad \mbox{for all $w \in W$},\]
where $\tchi\in \Irr(\tW)$ is an extension of $\chi$ to $\tW$. 

\begin{prop}[Lusztig \protect{\cite[7.1]{LuB}, \cite{lulc}}] \label{lu71} In 
the above setting, let $\cc$ be a two-sided cell of $W$ such that 
$\cc^\diamond=\cc$. Assume there exists some $\chi_0 \in \Irr^\diamond(W)$ 
and an extension $\tchi_0\in \Irr(\tW)$ of $\chi_0$ such that
\begin{equation*}
\Delta(\rho)\langle R_{\tchi_0},\rho\rangle_G>0 \qquad \mbox{for all 
$\rho \in \Uch(G^F\mid \cc)$}.\tag{$*$}
\end{equation*}
Then $c_{\gamma w,\tchi_0}^*\geq 0$ for all $w \in \cc$; furthermore, 
$c_{\gamma w,\tchi_0}^*>0$ for all $w \in \cc$ such that $w^\diamond, 
w^{-1}$ belong to the same left cell of $W$.
\end{prop}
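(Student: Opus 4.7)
The plan is to imitate Lusztig's proof of the split analogue \cite[7.1]{LuB}, \cite[3.14]{lulc}, consistently working in the extended algebra $\tH$ and in the coset $\gamma W\subseteq \tW$. The central ingredient to establish is an explicit formula
\[ c_{\gamma w,\tchi_0}^* \;=\; \sum_{\rho \in \Uch(G\mid \cc)} \Delta(\rho)\,\langle R_{\tchi_0},\rho\rangle_G \cdot n_{w,\rho}, \]
with coefficients $n_{w,\rho}\geq 0$ depending only on $w$ and $\rho$. Once this is in place, the non-negativity assertion follows immediately from the hypothesis $(*)$.

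To derive the formula, I would first use (\ref{exthec1})(a)-(b) to interpret the trace of $\bC_{\gamma w}$ on the left $L$-cell module $[\Gamma^+]_1 = \Ind_W^{\tW}([\Gamma]_1)$ via the Kazhdan--Lusztig structure constants of $\tH$ restricted to $\tilde{\cc} = \cc^+$. Extracting the leading-degree contribution in the sense of (\ref{exthec2}) expresses $c_{\gamma w,\tchi_0}$ as a linear combination of these structure constants weighted by the $v=1$ character of $\tchi_0$ on $[\Gamma^+]_1$. Inverting via Lusztig's Fourier transform attached to the $\diamond$-stable family containing $\chi_0$ (\cite[Main Theorem~4.23]{LuB}) rewrites the sum in terms of the unipotent characters $\rho\in \Uch(G\mid \cc)$; the coefficients $n_{w,\rho}$ that emerge are manifestly expressible as sums of squares of twisted character values, hence non-negative, in parallel with \cite[Chap.~5]{LuB}. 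For the strict positivity clause, the assumption that $w^\diamond$ and $w^{-1}$ lie in a common left cell $\Gamma$ of $W$ guarantees that $\gamma w$ and its left $L$-cell in $\tW$ pair nontrivially with $[\Gamma^+]_1$; by (\ref{exthec2})(b), the sum of squares $\sum_{\sigma}c_{\sigma,\tchi_0}^2$ over the left $L$-cell of $\gamma w$ is strictly positive, forcing some $n_{w,\rho}>0$ and, combined with $(*)$, yielding $c_{\gamma w,\tchi_0}^*>0$.

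The main obstacle is synchronising signs across three a priori independent choices: the sign $(-1)^{\ab_\chi+\ell(w)}$ built into $c_{\gamma w,\tchi_0}^*$, the sign function $\Delta(\rho)$ of \cite[4.23]{LuB}, and the ``preferred extension'' convention (\ref{dspecial}) that pins down which extension $\tchi_0$ of $\chi_0$ is used. A sign error at any stage invalidates the positivity statement; reconciling these conventions is precisely why the section is devoted to a careful analysis of extensions of $\diamond$-invariant characters. Modulo this bookkeeping, the remaining arguments are direct transcriptions of Lusztig's split-case proof.
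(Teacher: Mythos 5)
The overall structure of your proposal — reduce to a formula $c_{\gamma w,\tchi_0}^*=\sum_{\rho}\Delta(\rho)\langle R_{\tchi_0},\rho\rangle_G\, n_{w,\rho}$ with $n_{w,\rho}\geq 0$, then invoke $(*)$ — matches the paper's strategy in spirit, and the emphasis on reconciling the sign conventions is well placed. However, two of your intermediate steps do not work as stated.

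First, your route to the non-negativity of the coefficients $n_{w,\rho}$ is not established. You claim they are ``manifestly expressible as sums of squares of twisted character values'' after passing through KL structure constants and a Fourier transform; this is not where the positivity comes from. The paper takes a shorter and sharper path: form the class function $R_{\gamma w}:=\sum_{\chi\in\Irr^\diamond(W)}c_{\gamma w,\tchi}\,R_{\tchi}$ (which lies in the span of $\Uch(G\mid\cc)$ by \cite[5.2]{LuB}), use orthonormality of the $R_{\tchi}$ to obtain $n_{w,\rho}=\langle R_{\gamma w},\rho\rangle_G$, use \cite[6.19]{LuB} to identify $(-1)^{\ab_{\chi_0}+\ell(w)}$ with $\Delta(\rho)$ on the support, and then invoke the Disjointness Theorem \cite[6.17]{LuB}, which says $R_{\gamma w}$ is an \emph{actual character} of $G$; that is the reason $n_{w,\rho}\geq 0$. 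Your proposal never supplies that input, and without it the non-negativity is not obtained.

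Second, your strict positivity argument has a genuine gap. You conclude from (\ref{exthec2})(b) that $\sum_{\sigma}c_{\sigma,\tchi_0}^2>0$ over the left $L$-cell of $\gamma w$ and say this ``forces some $n_{w,\rho}>0$.'' But positivity of a sum over the whole cell tells you nothing about the single term indexed by $\gamma w$: you need $c_{\gamma w,\tchi}\neq 0$ for \emph{some} $\chi\in\Irr^\diamond(W)$ and for \emph{this particular} element $\gamma w$, not merely for some element of the cell. The paper obtains this via the based ring structure of Lusztig's asymptotic algebra $\tJ$ \cite[3.1, 3.4]{lulc}: $\gamma w$ and $(\gamma w)^{-1}=\gamma(w^\diamond)^{-1}$ lie in the same left $L$-cell precisely when $t_{\gamma w}^2\neq 0$, which is equivalent to some irreducible character of $\tJ$ being non-zero on $t_{\gamma w}$, which in turn translates into $c_{\gamma w,\tilde\psi}\neq 0$ for some $\tilde\psi\in\Irr(\tW)$; one then still needs Remark~\ref{rem31} to force $\tilde\psi$ to be an extension of a $\diamond$-invariant character (rather than induced from $W$), so that it actually contributes to $R_{\gamma w}$. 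Without this asymptotic-algebra input, the strict positivity clause is not proved.
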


\begin{proof} First consider the inequality $c_{\gamma w,\tchi_0}^*\geq 0$ 
for all $w \in \cc$. In \cite[7.1]{LuB}, this is proved assuming that $F$ acts 
trivially on $W$ and $\Delta(\rho) =1$ for all $\rho \in \Uch(G\mid \cc)$. 
But the same proof gives the more general statement above. Let us briefly 
sketch the main ingredients. For each $\chi \in \Irr^\diamond(W)$ (other 
than $\chi_0$), let us fix some extension $\tchi\in \Irr(\tW)$. Let 
$w\in \cc$ and consider the class function 
\[ R_{\gamma w}:=\sum_{\chi \in \Irr^\diamond(W)} c_{\gamma w,\tchi}
\, R_{\tchi}\]
on $G$. (Note that this is independent of any choices.) Since $w \in
\cc$, we have that $R_{\gamma w}$ is a linear combination of the 
unipotent characters in $\Uch(G\mid \cc)$; see \cite[5.2]{LuB}. Since 
the functions $\{R_{\tchi}\mid \chi \in \Irr^\diamond(W)\}$ form an 
orthonormal system (see \cite[3.9]{LuB}), we obtain
\[ c_{\gamma w,\tchi_0}=\sum_{\rho \in \Uch(G\mid \cc)} \langle 
R_{\tchi_0}, \rho\rangle_G\, \langle R_{\gamma w}, \rho\rangle_G .\] 
Now let $\rho \in \Uch(G\mid \cc)$ be such that the corresponding terms in
the above sum are non-zero. Then \cite[6.19]{LuB} shows that 
$(-1)^{\ab_{\chi_0}+\ell(w)}=\Delta(\rho)$. Hence, we obtain 
\[ c_{\gamma w,\tchi_0}^*=(-1)^{\ab_{\chi_0}+\ell(w)}c_{\gamma w, \tchi_0}=
\sum_{\rho \in \Uch(G\mid \cc)} \Delta(\rho)\langle R_{\tchi_0}, 
\rho\rangle_G\, \langle R_{\gamma w}, \rho\rangle_G .\] 
Now, by the ``Disjointness Theorem'' \cite[6.17]{LuB}, $R_{\gamma w}$ is 
an actual character of $G$ and so $\langle R_{\gamma w}, \rho\rangle_G
\geq 0$ for all $\rho \in \Uch(G\mid \cc)$. Since ($*$) is assumed
to hold, we conclude that $c_{\gamma w,\tchi_0}\geq 0$ for all $w \in \cc$.

Finally, assume that $w \in \cc$ is such that $w^\diamond, w^{-1}$ belong 
to the same left cell of $W$. Then we must prove that $c_{\gamma w,
\tchi_0}^* \neq 0$. Now, the above expression for $c_{\gamma w, \tchi_0}^*$ 
(together with ($*$)) shows that it will be sufficient to prove that that 
there exists some $\rho \in \Uch(G\mid \cc)$ such that $\langle 
R_{\gamma w}, \rho\rangle_G\neq 0$. For this purpose, it is enough to 
show that $R_{\gamma w} \neq 0$. Furthermore, since the class functions 
$\{R_\tchi\}$ are linearly independent, it will be sufficient to show that
$c_{\gamma w,\tchi} \neq 0$ for some $\chi \in \Irr^\diamond(W)$. But this 
follows by an argument involving Lusztig's asymptotic algebra $\tJ$; see 
\cite[3.1]{lulc}. Indeed, this algebra has a basis $\{t_{\sigma} \mid \sigma
\in \tW\}$ where the structure constants are integers. It is known that
$\tJ$ is a ``based ring'' in the sense of \cite[\S 1]{lulc}; see 
\cite[3.1(j)]{lulc}. This has several consequences. First of all, by 
\cite[3.1(k)]{lulc}, the elements $\sigma, \sigma^{-1}$ belong to the 
same left $L$-cell in $\tW$ if and only if $t_{\sigma}^2\neq 0$. 
Furthermore, by \cite[1.2(b)]{lulc}, we have $t_{\sigma}^2\neq 0$ if and 
only some irreducible character of $\tJ$ has a non-zero value on 
$t_{\sigma}$. Finally, by \cite[3.4(a), (e)]{lulc}, the leading coefficients 
$c_{\sigma,\tilde{\psi}}$ can be interpreted (up to signs) as the 
values of the irreducible characters of $\tJ$ on $t_{\sigma}$.  Thus,
we have:
\[ \sigma,\sigma^{-1} \mbox{ belong to the same left $L$-cell} 
\qquad \Leftrightarrow  \qquad c_{\sigma,\tilde{\psi}}\neq 0 \mbox{ for
some $\tilde{\psi} \in \Irr(\tW)$}.\]
Now return to our element $w \in \cc$ such that $w^\diamond, w^{-1}$ 
belong to the same left cell of $W$. Since $\diamond$ permutes the left 
cells of $W$, we also have that $w,(w^\diamond)^{-1}$ belong to the same 
left cell of $W$. Consequently, since $(\gamma w)^{-1}=w^{-1}\gamma=
\gamma (w^\diamond)^{-1}$, the elements $\gamma w,(\gamma w)^{-1}$ belong 
to the same left $L$-cell of $\tW$. So the above equivalence shows that 
there exists some $\tilde{\psi}\in \Irr(\tW)$ such that $c_{\gamma w,
\tilde{\psi}} \neq 0$. But then Remark~\ref{rem31} implies that 
$\tilde{\psi}$ must be an extension of some $\psi \in \Irr^\diamond(W)$, 
as required.
\end{proof}

\begin{prop} \label{prefspec} Assume that we are in the setting of 
(\ref{exthec3}). Let $\cc$ be a two-sided cell of $W$ such 
that $\cc^\diamond=\cc$. Let $\chi_0\in \Irr(W)$ be the unique special 
character in $\Irr(W\mid\cc)$; we have $\chi_0^\diamond =\chi_0$. Then 
condition $(*)$ in Proposition~\ref{lu71} holds if $\tchi_0$ is the 
$\diamond$-special extension of $\chi_0$ in the sense of (\ref{dspecial}).
\end{prop}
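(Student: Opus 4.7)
The plan is to apply Lusztig's ``Main Theorem''~\cite[4.23]{LuB} and the sign refinements in \cite[Chap.~6]{LuB}, which together compute each multiplicity $\langle R_{\tchi},\rho\rangle_G$ as $\Delta(\rho)$ times an entry of a non-abelian Fourier matrix attached to the $\diamond$-stable family $\Irr(W\mid\cc)\subseteq \Irr^\diamond(W)$. That $\chi_0^\diamond=\chi_0$ is automatic from Remark~\ref{dfam}, since $\Irr(W\mid\cc)$ is $\diamond$-stable and contains a unique special character. Under Lusztig's parametrisation, both $\Irr(W\mid\cc)$ and $\Uch(G\mid\cc)$ are indexed by $\sigma$-conjugacy classes of pairs $(x,\psi)$ in an associated small finite group, and $\chi_0$ is placed at the distinguished pair $(1,1)$.

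Once this parametrisation is in place, the $(1,1)$-row of the Fourier matrix is explicit and manifestly non-negative, with no zero entries (see \cite[4.14]{LuB}). Hence each $\Delta(\rho)\langle R_{\tchi_0},\rho\rangle_G$ is strictly positive \emph{up to} the global sign introduced by the choice of extension of $\chi_0$ to $\tW$: replacing $\tchi_0$ by the other extension flips the sign of $R_{\tchi_0}$, hence of every multiplicity, uniformly. So $(*)$ reduces to the claim that the $\diamond$-special extension selects the correct global sign.

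This last point is precisely the design purpose of Lusztig's ``preferred extension'' in \cite[17.2]{cs4}, made canonical by the $\Q$-rationality property (\ref{exthec0})(b): it is the unique extension of $\chi_0$ for which $R_{\tchi_0}$ is an honest non-negative combination of the twisted basis $\{\Delta(\rho)\rho\}_{\rho\in \Uch(G\mid\cc)}$. For $W$ of type $D_n$, this is exactly the symbol-theoretic criterion (\ref{symD})(b) used to define $\diamond$-special in (\ref{dspecial}). The main obstacle I foresee is verifying that the combinatorial notion of ``$\diamond$-special'' coincides with Lusztig's geometric ``preferred extension'' type by type: for classical types it is handled via the symbol calculus of Example~\ref{typeD} and (\ref{symD}), while for the remaining case of $E_6$ it amounts to a finite check against the almost-character tables of \cite[Chap.~4]{LuB}.
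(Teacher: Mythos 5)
Your overall framing is in the same spirit as the paper: reduce to $W$ irreducible, invoke Lusztig's Main Theorem 4.23 together with the $\Delta$-sign refinements of Chapter 6, place $\chi_0$ at the pair $(1,1)$, and observe that replacing $\tchi_0$ by the other extension flips $R_{\tchi_0}$ by a global sign, so $(*)$ becomes the choice of the correct sign. Up to that point the outline matches the paper's proof.

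There is, however, a genuine gap at the decisive step. You assert that the positivity in $(*)$ ``is precisely the design purpose of Lusztig's preferred extension'' and that the preferred extension ``is the unique extension of $\chi_0$ for which $R_{\tchi_0}$ is an honest non-negative combination of $\{\Delta(\rho)\rho\}$.'' That is not how the preferred extension is defined: in \cite[17.2]{cs4} (and as reproduced in (\ref{symD}) and (\ref{dspecial})) it is defined by an explicit combinatorial rule (a symbol condition in classical type; the action of $\gamma w_0$ by $(-1)^{\ab_\chi}$ in the $A_n,E_6$ cases). The fact that this combinatorially-defined extension is the one for which $\Delta(\rho)\langle R_{\tchi_0},\rho\rangle_G>0$ is exactly the content of the proposition; citing it as a ``design purpose'' is circular. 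Relatedly, the obstacle you describe near the end --- checking that ``$\diamond$-special'' coincides with ``preferred extension'' --- is not the real obstacle: when the restriction of $\tchi$ to $W$ is irreducible, (\ref{dspecial})(b) \emph{defines} $\diamond$-special to mean preferred extension, so there is nothing to verify there. What actually must be checked (and what the paper does) is that the preferred extension, once identified concretely, produces the desired positivity. This requires matching the combinatorics against the explicit Fourier matrices and $\Delta$-function: against \cite[4.19, p.~124]{LuB} for $W$ of type $A_n$ or $E_6$ with $\diamond$ given by conjugation with $w_0$ (a case your sketch omits), and against \cite[4.18]{LuB} and \cite[Thm.~3.15]{LuDn} (noting $\Delta\equiv 1$ there by \cite[6.18.5, 6.19]{LuB}) for type $D_n$. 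Your proposal gestures at such a check only for $E_6$ and leaves the classical case to ``the symbol calculus'' without saying what must actually be verified.
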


\begin{proof} By standard reduction arguments, it is enough to prove this
in the case where $W$ is irreducible. If $F$ acts trivially on $W$, then 
the multiplicity formula in \cite[Main Theorem 4.23]{LuB} shows that
$\langle R_{\tchi_0},\rho\rangle_G=\Delta(\rho)$ for all $\rho \in
\Uch(G\mid \cc)$. (The special character $\chi_0$ corresponds to the
pair $(1,1)$ in the set $\cM(\cG_\cc)$ where $\cG_\cc$ is the finite
group associated with $\cc$.) Hence, the assertion is clear in this case. 
So let us now assume that $F$ does not act trivially on $W$. Then we only 
have $2$ cases to consider:
\begin{itemize}
\item[(a)] $W$ is of type $E_6$ or $A_n$ and $\diamond$ is given by
conjugation with the longest element.
\item[(b)] $W=W_n$ is of type $D_n$ and $\diamond$ is given as in
Example~\ref{typeD}.
\end{itemize}
Assume that we are in case (a). Then $\chi^\diamond=\chi$ for all
$\chi \in \Irr(W)$. Given $\chi$, the ``preferred extension''
$\tchi$ is determined by the condition that $\gamma w_0$ acts
as $(-1)^{\ab_\chi}$ in a representation affording $\tchi$. The assertion
then follows from the description of the Fourier matrix in 
\cite[4.19]{LuB} and the $\Delta$-function in \cite[p.~124]{LuB}.
Now assume that we are in case (b). The ``preferred extensions''
are described in (\ref{symD}). We have $\Delta(\rho)=1$ for all 
$\rho \in \Uch(G)$; see \cite[6.18.5, 6.19]{LuB}. The assertion now 
follows from the description of the Fourier matrix in \cite[4.18]{LuB}; 
see also \cite[Theorem~3.15]{LuDn}. (This has also been discussed in some 
detail in the proof of \cite[Theorem~5.1]{gema}.)
\end{proof}

\begin{lem}[``The basic identity''; cf.\  \protect{\cite[\S 3]{boge}}] 
\label{bi} Let $\cc$ be a two-sided cell of $W$ such that
$\cc^\diamond=\cc$. Let $\Gamma$ be a left cell of $W$ such that $\Gamma
\subseteq \cc$ and $\Cd$ be a $\diamond$-conjugacy class of 
$\diamond$-twisted involutions of $W$.  Then 
\[ \langle [\Gamma]_1,\chi\rangle_{W} \sum_{w \in \Cd \cap \cc} 
c_{\gamma w,\tchi}=\chi(1) \sum_{w \in \Cd \cap \Gamma} c_{\gamma w,
\tchi} \qquad \mbox{for all $\chi \in \Irr^\diamond(W)$},\]
where $\tchi\in \Irr(\tW)$ denotes a fixed extension of $\chi\in 
\Irr^\diamond(W)$ to $\tW$.
\end{lem}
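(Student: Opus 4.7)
The plan is to mimic the argument of Bonnaf\'e and the author \cite[\S 3]{boge} for the split case, carried out this time inside Lusztig's asymptotic algebra $\tJ$ associated with the extended Iwahori--Hecke algebra $\tH$ and the weight function $L$. Recall that $\tJ$ has an $A$-basis $\{t_\sigma\mid\sigma\in\tW\}$, that it decomposes as $\tJ=\bigoplus_{\tilde{\cc}}\tJ_{\tilde{\cc}}$ according to two-sided $L$-cells, and that under the bijection $\tchi\leftrightarrow\tchi^{\sharp}$ between $\Irr(\tW)$ and $\Irr(\tJ_K)$ one has $\tchi^\sharp(t_\sigma)=\pm\,c_{\sigma,\tchi}$ (cf.\ \cite[3.4]{lulc}). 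The simple $\tJ_{\cc^+,K}$-modules $V_\tchi$ are indexed by $\tchi\in\Irr(\tW\mid \cc^+)$ and have dimension $\chi(1)$.

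The first and crucial step is to prove that
\[ z_{\Cd}:=\sum_{w\in \Cd\cap \cc} t_{\gamma w}\;\in\;\tJ_{\cc^+} \]
lies in the centre of $\tJ_{\cc^+,K}$. This rests on the observation that $\gamma\Cd$ is a single ordinary conjugacy class of involutions in $\tW$ (Remark~\ref{semidir}), combined with the conjugation-invariance properties of the structure constants of $\tJ$ (cf.\ \cite[\S 3]{lulc}); this parallels the centrality argument used in \cite[\S 3]{boge} for the split case. I expect this centrality to be the main technical obstacle, requiring careful sign bookkeeping in the twisted setting.

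Granted centrality, Schur's lemma supplies a scalar $\lambda_\tchi\in K$ such that $z_\Cd$ acts as $\lambda_\tchi\Id$ on $V_\tchi$. Taking the trace on $V_\tchi$ and using that all elements of $\Cd$ have the same length parity (so that the sign relating $\tchi^\sharp$ to $c_{\cdot,\tchi}$ is uniform in $w\in\Cd$) yields
\[ \chi(1)\,\lambda_\tchi=\varepsilon\sum_{w\in \Cd\cap \cc}c_{\gamma w,\tchi} \]
for a single sign $\varepsilon=\pm 1$ depending only on $\tchi$ and~$\Cd$.

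Finally, one reads off $\lambda_\tchi$ from the $\tJ$-action on the cell module $[\Gamma^+]_1=\Ind_W^{\tW}([\Gamma]_1)$. By Remark~\ref{rem31} and (\ref{exthec1})(b), only extensions of $\diamond$-invariant characters of $W$ contribute to the $\tJ$-character of $[\Gamma^+]_1$, and the $\tchi$-isotypic occurs with multiplicity $\langle [\Gamma]_1,\chi\rangle_W$. Applying the central idempotent $\mathbf{e}_\tchi$ and using that the structure constants $\gamma_{\gamma w,y,y^{-1}}$ of $\tJ$ vanish unless $\gamma w$ and $y$ lie in compatible left cells forces the only surviving $w$ in the trace of $z_\Cd\mathbf{e}_\tchi$ on $[\Gamma^+]_1$ to lie in $\Cd\cap\Gamma$, whence
\[ \chi(1)\,\lambda_\tchi\cdot\langle [\Gamma]_1,\chi\rangle_W=\varepsilon\,\langle [\Gamma]_1,\chi\rangle_W\sum_{w\in \Cd\cap\Gamma}c_{\gamma w,\tchi}. \]
Eliminating $\varepsilon$ and $\chi(1)\,\lambda_\tchi$ between the two computations yields the asserted identity. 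The hard part is Step~1 (centrality of $z_\Cd$); once that is in place, the remainder is standard semisimple-algebra character theory combined with the cell-filtered structure of $\tJ$-modules.
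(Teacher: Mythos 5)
Your Step~1 --- the centrality of $z_\Cd=\sum_{w\in\Cd\cap\cc} t_{\gamma w}$ in $\tJ_{\cc^+,K}$ --- is the crux of the argument, and it is not established by what you write. You invoke ``conjugation-invariance properties of the structure constants of $\tJ$'' together with the fact that $\gamma\Cd$ is a single conjugacy class of $\tW$, but the asymptotic algebra is not a group algebra and no such conjugation-invariance of the $\gamma_{x,y,z}$ is stated in \cite[\S 3]{lulc}; the known symmetries (cyclicity $\gamma_{x,y,z}=\gamma_{y,z,x}$ and compatibility with inversion) do not yield centrality of $\sum_{\sigma\in\gamma\Cd}t_\sigma$ in any direct way. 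This is also not how the centrality is obtained in \cite[\S 3]{boge} for the split case, contrary to what you suggest: there the argument takes place in the Hecke algebra, not in $J$.

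The paper's proof follows \cite[\S 3]{boge} at precisely this point: one first verifies, by a direct computation as in \cite[Lemma~1.2]{boge}, the intertwining relation
\[ T_{s^\diamond}\,Z = Z\,T_s \qquad \mbox{for all $s\in S$},\qquad\mbox{where}\qquad Z:=\sum_{w\in\Cd}(-1)^{\ell(w)}\,T_w\in\cH. \]
Since $\Cd$ is closed under $\diamond$ and under inversion, this immediately gives that $\tilde{Z}:=T_\gamma Z=\sum_{w\in\Cd}(-1)^{\ell(w)}T_{\gamma w}$ is central in $\tH$. Only then does one pass to the asymptotic picture (using the properties of leading coefficients and of the structure constants of the Kazhdan--Lusztig basis from \cite[3.1--3.4]{lulc}) to read off the scalar by which $\tilde Z$ acts on each simple $\tH_K$-module, and the remaining Schur-lemma and trace computation runs as you outline. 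Note also that your $z_\Cd$ omits the sign $(-1)^{\ell(w)}$ appearing in $\tilde Z$; this happens to be harmless because all elements of a $\diamond$-conjugacy class of $\diamond$-twisted involutions have the same length parity, but that observation should be made explicit rather than left implicit in your ``sign relating $\tchi^\sharp$ to $c_{\cdot,\tchi}$ is uniform'' remark.
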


\begin{proof} If $\diamond=1$, then this is proved in \cite[Lemma~3.1]{boge}.
The general case is completely analogous. First, as in the proof of 
\cite[Lemma~1.2]{boge}, one verifies that $T_{s^\diamond}Z= ZT_s$ for all 
$s \in S$, where 
\[ Z:=\sum_{w \in \Cd} (-1)^{\ell(w)}\,T_w \in \cH.\]
Consequently, the element $\tilde{Z}:=\sum_{w \in \Cd} (-1)^{\ell(w)}\, 
T_{\gamma w} \in \tH$ lies in the centre of $\tH$. Once this is 
established, the proof proceeds exactly as in \cite[Lemma~3.1]{boge}. All 
the required properties of the leading coefficients $c_{\gamma w,\tchi}$ and 
the structure constants of the Kazhdan--Lusztig basis of $\tH$ hold 
by \cite[3.1--3.4]{lulc}.
\end{proof}


We now apply the above results to type $D_n$. 

\begin{prop} \label{invDn} Let $W=W_n$ be of type $D_n$ and $\diamond$ be 
as in Example~\ref{typeD}. We identify $\tilde{W}$ with a group $\tW_n$ 
of type $B_n$. Let $\cc$ be a two-sided cell of $W_n$ and $\chi_0 \in 
\Irr(W_n)$ be the unique special character in $\Irr(W_n\mid \cc)$. Then 
the following hold.
\begin{itemize}
\item[(a)] Assume that $\cc^\diamond=\cc$ and let $\tchi_0 \in \Irr(\tW_n)$ 
be the preferred extension of $\chi_0=\chi_0^\diamond$. Then 
\[  c_{tw,\tchi_0}^*=1 \qquad \mbox{for all $\diamond$-twisted involutions 
$w \in \cc$}.\]
furthermore, each left cell $\Gamma \subseteq \cc$ contains exactly 
$f_{\chi_0}$ $\diamond$-twisted involutions.
\item[(b)] Let $\Cd$ be a $\diamond$-conjugacy class of 
$\diamond$-twisted involutions in $W_n$. Then 
\[ |\Cd\cap \cc|=\chi_0(1)|\Cd \cap \Gamma| \qquad
\mbox{for any left cell $\Gamma \subseteq \cc$}.\]
\end{itemize}
\end{prop}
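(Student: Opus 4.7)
My plan is to deduce both parts by combining the positivity results of Propositions~\ref{lu71} and~\ref{prefspec} with the involution-counting already established for $W_n$ and $\tW_n$ in Example~\ref{invD0}, and then to pass from two-sided cells to individual left cells via the basic identity of Lemma~\ref{bi}.

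For part (a), I would first invoke Proposition~\ref{prefspec} to check that condition $(*)$ of Proposition~\ref{lu71} holds for the preferred extension $\tchi_0$ of the special character $\chi_0=\chi_0^\diamond$. This yields $c_{tw,\tchi_0}^*\geq 0$ for all $w\in\cc$, with strict positivity whenever $w^\diamond$ and $w^{-1}$ lie in the same left cell of $W_n$; in particular, every $\diamond$-twisted involution $w\in\cc$ gives an integer $c_{tw,\tchi_0}^*\geq 1$. To pin the value down to $1$, I would fix a left cell $\Gamma\subseteq\cc$ and compare two counts. On the one hand, the identity (\ref{exthec2})(b) together with the multiplicity-one statement in Example~\ref{invD0}(a) gives
\[ \sum_{w'\in\Gamma}(c_{tw',\tchi_0}^*)^2 = f_{\chi_0}. \]
On the other hand, the involutions of $\tW_n$ contained in the left $L$-cell $\tilde{\Gamma}=\Gamma\cup t\Gamma$ split into the ordinary involutions lying in $\Gamma$ and the elements $tw$ with $w\in\Gamma$ that are involutions in $\tW_n$; the condition $(tw)^2=1$ is equivalent to $w$ being a $\diamond$-twisted involution. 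Using $f_{\tchi_0}=2f_{\chi_0}$ from (\ref{exthec2})(a) (available since $\tchi_0$ is a genuine extension, as $\cc^\diamond=\cc$ forces $\chi_0^\diamond=\chi_0$), together with Example~\ref{invD0}(d) for the count of involutions in $\tilde{\Gamma}$ and Example~\ref{invD0}(a) for the count of ordinary involutions in $\Gamma$, I conclude that $\Gamma$ contains exactly $f_{\chi_0}$ $\diamond$-twisted involutions. Each contributes at least $1$ to the sum of squares above, so tightness forces every contribution to equal $1$ and every other term to vanish.

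For part (b), if $\cc^\diamond\neq\cc$ then the argument of Remark~\ref{dperm} (applied to $\cc$ rather than $\Gamma$, using that two-sided cells are closed under $w\mapsto w^{-1}$ and partition $W_n$) shows that $\Cd\cap\cc=\varnothing$, so both sides of the identity vanish trivially. Otherwise $\cc^\diamond=\cc$, and part (a) is available. I would apply Lemma~\ref{bi} with $\chi=\chi_0$ and $\tchi=\tchi_0$; using $\langle[\Gamma]_1,\chi_0\rangle_{W_n}=1$, this reduces to
\[ \sum_{w\in\Cd\cap\cc}c_{tw,\tchi_0} = \chi_0(1)\sum_{w\in\Cd\cap\Gamma}c_{tw,\tchi_0}. \]
By part (a), $c_{tw,\tchi_0}=(-1)^{\ab_{\chi_0}+\ell(w)}$ for every $w\in\Cd\cap\cc$. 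Since $w\mapsto(-1)^{\ell(w)}$ is the sign character of $W_n$, and since $\diamond$ permutes the simple reflections and so fixes the sign character, this value is constant on the $\diamond$-conjugacy class $\Cd$. The common sign cancels from both sides, leaving $|\Cd\cap\cc|=\chi_0(1)\,|\Cd\cap\Gamma|$.

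The main obstacle I expect is step (a): several ingredients must be mobilised at once---the positivity from Proposition~\ref{prefspec}, the combinatorial identification of involutions in $\tW_n$ with $\diamond$-twisted involutions in $W_n$, the multiplicity-freeness of the left cell modules in type $D$, and the equality $f_{\tchi_0}=2f_{\chi_0}$---so as to make the sum-of-squares constraint tight enough to force the exact value $1$ rather than merely a larger positive integer. Once (a) is available in this sharp form, part (b) follows cleanly from Lemma~\ref{bi} and the class-function property of the sign character.
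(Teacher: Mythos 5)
Your proof is correct and follows essentially the same route as the paper: positivity from Propositions~\ref{lu71} and~\ref{prefspec}, a counting argument based on Example~\ref{invD0}(a),(d) together with (\ref{exthec2})(a),(b) to force $c_{tw,\tchi_0}^*=1$, and Lemma~\ref{bi} for part (b). The only cosmetic difference is that you compute the number of $\diamond$-twisted involutions in $\Gamma$ directly by subtracting the count of ordinary involutions in $\Gamma$ from the count of involutions in $\tilde\Gamma$, whereas the paper first derives the upper bound $\leq f_{\chi_0}$ from the sum of squares and then shows it is attained; these are the same counting argument arranged in a slightly different order.
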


\begin{proof} (a) Let $w \in \cc$ be a $\diamond$-twisted involution. Then
$w^\diamond=w^{-1}$ belong to the same left cell. By Propositions~\ref{lu71} 
and \ref{prefspec}, we conclude that $c_{tw, \tchi_0}^*>0$. In order to show 
that $c_{tw,\tchi_0}^*=1$, we use a counting argument. Let $\Gamma 
\subseteq \cc$ be any left cell. Since $c_{tw,\tchi_0}^*>0$ for all 
$\diamond$-twisted involutions $w \in \Gamma$, we have 
\[ \mbox{(number of $\diamond$-twisted involutions in $\cc$)} 
\leq \sum_{w \in \cc} c_{tw,\tchi_0}^2, \]
with equality only if $c_{tw,\tchi_0}^*=1$ for all $\diamond$-twisted 
involutions $w\in\Gamma$. By (\ref{exthec2})(b), the right hand side 
equals $f_{\chi_0} \langle [\Gamma]_1,\chi_0 \rangle_{W_n}$. By 
Example~\ref{invD0}(a), we have $\langle [\Gamma]_1,\chi_0\rangle_{W_n}=1$
and so 
\[ \mbox{(number of $\diamond$-twisted involutions in $\Gamma$)}  \leq
f_{\chi_0}.\]
Now consider the left $L$-cell $\Gamma^+=\Gamma \cup t\Gamma$ of
$\tW_n$. Using Example~\ref{invD0}(a) and Remark~\ref{semidir}, the above
inequality can be rephrased as:
\[\mbox{(number of ordinary involutions in $\Gamma^+$)}\leq 2f_{\chi_0},\]
where equality holds if and only if equality holds in all the previous
inequalities. But then Example~\ref{invD0}(d) shows that all the previous 
inequalities must be equalities; note that $f_{\tchi_0}=2f_{\chi_0}$
in this case. In particular, $c_{tw,\tchi_0}^*=1$ for all $\diamond$-twisted 
involutions $w \in \Gamma$. It also follows that the number of 
these $\diamond$-twisted involutions equals~$f_{\chi_0}$. Thus, (a) is proved.

(b) If $\cc^\diamond \neq \cc$, then both sides of the equality are zero
by Remark~\ref{dperm}. So let us now assume that $\cc^\diamond=\cc$ and 
let $\tchi_0$ be as in (a). By Example~\ref{invD0}(a), we have 
$\langle [\Gamma]_1, \chi_0\rangle_{W_n}=1$. It remains to use the 
identity in Lemma~\ref{bi}.
\end{proof}

\section{Twisted involutions in type $D_n$} \label{sec:tDn}

Throughout this section we place ourselves in the setting of 
Example~\ref{typeD}. Thus, $n \geq 2$ and $W=W_n$ is a Coxeter
group of type $D_n$, with generators $u,s_1,\ldots,s_{n-1}$. Let 
$w \mapsto w^\diamond$ be defined by $u^\diamond=s_1$, $s_1^\diamond=u$ 
and $s_i^\diamond=s_i$ for $2 \leq i \leq n-1$. We identify the semidirect 
product $\tW=W \rtimes \langle \diamond\rangle$ with the Coxeter group 
$\tW_n$ of type $B_n$ with generators $t,s_1,\ldots,s_{n-1}$ as in 
Example~\ref{typeB}. Recall that, under this identification, we have
\[ w^\diamond=twt \qquad \mbox{for all $w \in W_n$}.\]
Let $L \colon \tW_n \rightarrow \Z$ be defined as in (\ref{exthec1}).
We have already noted in Example~\ref{invD0}(c) that $L$ is a weight 
function in the sense of Lusztig \cite{lusztig}; explicitly, we have
\[ L(t)=0 \qquad \mbox{and} \qquad L(s_1)=L(s_2)=\ldots =L(s_{n-1})=1.\]
We shall consider the left, right and two-sided $L$-cells of $\tW_n$.
In Theorem~\ref{maint}, we state a modified version of Kottwitz'
conjecture for all (ordinary) conjugacy classes of involutions in $\tW_n$. 
This crucially relies on the construction of the modified 
involution module in Lemma~\ref{typeD2}. Then note that any involution 
in $\tW_n$ is either an ordinary involution in $W_n$ or corresponds to a 
$\diamond$-twisted involution in $W_n$. In Corollary~\ref{mainc}, we 
will see that the modified version of Kottwitz' conjecture for $\tW_n$ 
encapsulates both the split and the quasi-split version of Kottwitz' 
conjecture for $W_n$.

Throughout, it will be convenient to allow also the possibility
that $n=0,1$, where $\tW_0=\{1\}$, $\tW_1=\{1,t\}$ and $W_0=W_1=\{1\}$.

\begin{rem} \label{remBD}
We have the following relation between the length function $\ell$ on 
$W_n$ and the length function $\tell$ on $\tW_n$. Let $w\in W_n$ and 
$1\leq i \leq n-1$. Then we have 
\[\ell(ws_i)<\ell(w) \quad\Longleftrightarrow\quad
\tell(ws_i)<\tell(w) \quad\Longleftrightarrow\quad
\tell(tws_i)<\tell(tw).\]
Indeed, first note that, by \cite[1.4.12]{gepf}, we have $\tell(w)=\ell(w)+
\ell_t(w)$, where $\ell_t(w)$ denotes the number of occurrances of $t$
in a reduced expression of $w$ in terms of the generators of $\tW_n$. This
implies the first equivalence. To prove the second equivalence, we 
distinguish two cases. Suppose first that $\tell(tw)>\tell(w)$. Now, if
$\tell(ws_i)<\tell(w)$, then $\tell(tws_i)\leq \tell(w)<\tell(tw)$, as 
required. Conversely, assume that $\tell(tws_i)< \tell(tw)$. Since 
$\tell(tw)>\tell(w)$, this implies $\tell(tws_i)=\tell(w)$. So we must 
have $\tell(ws_i)<\tell(w)$ by \cite[Lemma~1.2.6]{gepf}. (Otherwise, we 
would have $tws_i=w$ and so $t,s_i$ would be conjugate, a contradiction.) 
The argument for the case $\tell(tw)<\tell(w)$ is similar.
\end{rem}

\begin{lem} \label{typeD2} Let $\CC$ be an (ordinary) conjugacy class of 
involutions of $\tW_n$. Let $\tilde{M}$ be a $\Q$-vector space with a 
basis $\{\tilde{a}_\sigma \mid \sigma\in \CC\}$. Then $\tilde{M}$ is a
$\Q[\tW_n]$-module, where the action is given by: 
\begin{align*}
t.\tilde{a}_\sigma& =\tilde{a}_{t\sigma t},\\
s_i.\tilde{a}_{\sigma}&=\left\{\begin{array}{cl} -\tilde{a}_{\sigma} 
& \qquad \mbox{if $s_i\sigma=\sigma s_i$ and $\tell(\sigma s_i)<
\tell(\sigma)$},\\ \tilde{a}_{s_i\sigma s_i} 
& \qquad \mbox{otherwise},  \end{array}\right.
\end{align*}
for $1\leq i \leq n-1$. Furthermore, let $\tilde{\kott}_{\CC}$ denote the
character of $\tW_n$ afforded by $\tilde{M}$. 
\begin{itemize}
\item[(a)] If $\CC \subseteq W_n$ and $\CC$ is a single conjugacy class
in $W_n$, then the restriction of $\tilde{\kott}_{\CC}$ to $W_n$ is 
Kottwitz' character $\kott_{\CC}^1$ for $W_n$ (split case, see 
Remark~\ref{koco1}).
\item[(b)] If $\CC \subseteq W_n$ and $\CC$ consists of two conjugacy classes
in $W_n$, then $n$ is even and the restriction of $\tilde{\kott}_{\CC}$ to 
$W_n$ equals the sum of the two characters $\kott_{\sigma_{n/2}}^1$ and 
$\kott_{t\sigma_{n/2}t}^1$ in (\ref{kottspecial}). 
\item[(c)] If $\CC \subseteq W_nt$, then $\Cd=\{ t\sigma \mid \sigma \in
\CC\}$ is a $\diamond$-conjugacy class of $\diamond$-twisted involutions
in $W_n$ and the restriction of $\tilde{\kott}_{\CC}$ to $W_n$ is 
Kottwitz' character $\kott_{\Cd}^\diamond$ for $W_n$ (quasi-split case).
\end{itemize}
\end{lem}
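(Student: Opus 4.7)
The strategy is to build $\tilde{M}$ as the extension to $\tW_n$ of an appropriate Lusztig--Vogan $W_n$-module, using that conjugation by $t$ provides a $\diamond$-equivariance of the $W_n$-action. This will establish the module structure and immediately identify the restrictions needed for (a), (b), (c).

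First, I would split into cases according to whether $\CC \subseteq W_n$ or $\CC \subseteq W_nt$. If $\CC \subseteq W_nt$, then by Remark~\ref{semidir} the map $\sigma \mapsto w = t\sigma$ is a bijection onto a single $\diamond$-conjugacy class $\Cd$ of $\diamond$-twisted involutions in $W_n$ (this is case (c)). If $\CC \subseteq W_n$, then either $\CC$ is a single $W_n$-class (case (a)), or $\CC$ is the union of two $W_n$-classes exchanged by conjugation by $t$ (case (b)); by Remark~\ref{speciale} and (\ref{clbn}), the latter occurs only for $n$ even with $\CC = C_0 \cup C_0^\diamond$.

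In each case I would invoke (\ref{luvo}) to put a $W_n$-module structure on the span of $\{\tilde{a}_\sigma\}$. In case (c) this is the LV module $M$ with basis $\{a_w \mid w \in \Cd\}$, identified with $\tilde{M}$ via $a_w \leftrightarrow \tilde{a}_{tw}$; in cases (a), (b) it is the split-case LV module(s), one for each $W_n$-class. Under these identifications, the map $\phi\colon \tilde{a}_\sigma \mapsto \tilde{a}_{t\sigma t}$ corresponds in case (c) to $a_w \mapsto a_{w^\diamond}$ (since $t(tw)t = wt = t \cdot twt$), and in cases (a), (b) to the involution induced by $t$-conjugation. A direct inspection of the LV formula shows $\phi \circ \rho(x) = \rho(x^\diamond) \circ \phi$ for all $x \in W_n$: the conditions ``$s^\diamond w = ws$ and $\ell(ws) < \ell(w)$'' and ``$s w^\diamond = w^\diamond s^\diamond$ and $\ell(w^\diamond s^\diamond) < \ell(w^\diamond)$'' are interchanged under $w \mapsto w^\diamond$, $s \mapsto s^\diamond$ (using that $\diamond$ preserves~$\ell$). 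Declaring $t$ to act by $\phi$ therefore extends the $W_n$-module to a $\tW_n$-module via $\tW_n = W_n \rtimes \langle t \rangle$.

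Next I would check that the formulas stated in the lemma agree with this $\tW_n$-action. For $t$ and for $s_i$ with $i \geq 2$ the match is immediate, using Remark~\ref{remBD} to translate between $\tell$ and $\ell$ and using that $ts_i = s_it$. For $s_1$, using $u = ts_1t$ and $s_1 t = tu$, a short computation shows that $(ts_1t).\tilde{a}_{tw} = \tilde{a}_{t \cdot s_1 w u}$ whenever the LV formula yields $a_{s_1 w u}$, and equals $-\tilde{a}_{tw}$ exactly in the sign case; this verifies that the action of $s_1$ prescribed in the lemma is the action inherited from $u = ts_1t$ and the $t$-action already defined. The character assertions then follow at once: in case (a) the restriction is the standard split LV module with character $\kott_\CC^1$; in case (b), $\tilde{M}|_{W_n}$ splits as a direct sum over the two $W_n$-classes, giving $\kott_{\sigma_{n/2}}^1 + \kott_{t\sigma_{n/2}t}^1$ by \S\ref{luvo} (combined with (\ref{kottspecial})); in case (c), the restriction is the LV module for $(W_n, \diamond)$ on $\Cd$, with character $\kott_\Cd^\diamond$ by (\ref{luvo}).

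The main obstacle is the $\diamond$-equivariance step: while conceptually transparent, it requires careful tracking of the interchange $s \leftrightarrow s^\diamond$ and $w \leftrightarrow w^\diamond$ across both the commutation and length conditions in the LV formula. A secondary technical point is the verification of the $s_1$-action in case (c), which hinges on manipulating the embedding $W_n \hookrightarrow \tW_n$ via $u = ts_1t$; here Remark~\ref{remBD} is what allows the translation of length conditions between $W_n$ and $\tW_n$ to go through cleanly.
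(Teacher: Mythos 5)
Your proposal is correct and essentially follows the paper's route: transport the Lusztig--Vogan module via $a_w \leftrightarrow \tilde{a}_{tw}$, extend to $\tW_n$, and use Remark~\ref{remBD} to translate the length conditions from $\ell$ to $\tell$. The one point where you diverge is that the paper invokes Lusztig's result \cite[0.4]{Lu12b} for the extension to $\tW_n$ (and \cite[Prop.~2.4]{gema} for the split cases), whereas you prove the extension directly by checking that $\phi \colon a_w \mapsto a_{w^\diamond}$ intertwines $\rho(x)$ with $\rho(x^\diamond)$ --- a clean, self-contained verification that amounts to reproving the cited result in this special case.
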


\begin{proof} If $\CC\subseteq W_n$, then this result is contained in 
\cite[Prop.~2.4]{gema}, with the only difference that the length condition 
is expressed in terms of the length $\ell$ on $W_n$. But then the first
equivalence in Remark~\ref{remBD} allows us to rewrite this condition as
above. By \cite[Rem.~2.2]{gema}, the character of the restriction of 
$\tilde{M}$ to $W_n$ is $\kott_{\CC}^1$. This yields (a) if $\CC$ is 
a single conjugacy class in $W_n$; otherwise, $n$ must be even and
$\CC=C_0\cup tC_0t$ where $C_0$ is the conjugacy class of the element
$\sigma_{n/2}$ in Remark~\ref{speciale}; this yields (b).

Now assume that $\CC \subseteq tW_n$. Then recall from Remark~\ref{semidir} 
that $\Cd:=\{t\sigma \mid \sigma\in \CC\}$ is a $\diamond$-conjugacy class 
of $\diamond$-twisted involutions in $W_n$. Let $M$ be a $\Q$-vector space 
with a basis $\{a_w \mid w\in \Cd\}$. By \cite[7.1]{LV} (see also
\cite{Lu12a}), we already know that $M$ is a $\Q[W_n]$-module, where the 
action is given by the following formulae for any $s \in \{u,s_1,\ldots,
s_{n-1}\}$:
\[ s.a_w=\left\{\begin{array}{cl} -a_w  & \qquad \mbox{if $s^\diamond w=ws$ 
and $\ell(ws)<\ell(w)$},\\ a_{s^\diamond ws} & \qquad \mbox{otherwise}.
\end{array}\right.\]
By (\ref{luvo}), the character of $W_n$ afforded by $M$ is 
$\kott_{\Cd}^\diamond$. Via the linear map $M\rightarrow \tilde{M}$, 
$a_w \mapsto \tilde{a}_{tw}$, we can transport this action to $\tilde{M}$.
The action of $W_n$ on $\tilde{M}$ is given by the following formulae for 
any $s \in \{u,s_1, \ldots,s_{n-1}\}$:
\[ s.\tilde{a}_{\sigma}=\left\{\begin{array}{cl} -\tilde{a}_{\sigma}  
& \qquad \mbox{if $s\sigma=\sigma s$ and $\ell(t\sigma s)<
\ell(t\sigma)$},\\ \tilde{a}_{s\sigma s} & \qquad \mbox{otherwise}.
\end{array} \right.\]
Now, by \cite[0.4]{Lu12b}, this action can be extended to $\tW_n$ via the
formulae:
\begin{align*}
t.\tilde{a}_{\sigma}& =\tilde{a}_{t\sigma t},\\ 
s_i.\tilde{a}_{\sigma}&=\left\{\begin{array}{cl} -\tilde{a}_{\sigma} 
& \qquad \mbox{if $s_i\sigma=\sigma s_i$ and $\ell(t\sigma s_i)<
\ell(t\sigma)$},\\ \tilde{a}_{s_i\sigma s_i} & \qquad \mbox{otherwise};
\end{array}\right.
\end{align*}
Then Remark~\ref{remBD} shows that the conditions involving the
length function $\ell$ on $W_n$ can be rewritten in terms of the length 
function $\tell$ on $W_n$. 
\end{proof}

We can now state the main result of this section.

\begin{thm} \label{maint} Let $\CC$ be any (ordinary) conjugacy class of 
involutions in $\tW_n$. Then 
\begin{center}
\fbox{$\langle \tilde{\kott}_\CC,[\tilde{\Gamma}]_1\rangle_{\tW_n}=
|\CC \cap \tilde{\Gamma}| \qquad \mbox{for any left $L$-cell 
$\tilde{\Gamma}\subseteq \tW_n$};$}
\end{center}
here, $\tilde{\kott}_\CC$ is the character of the $\Q[\tW_n]$-module
$\tilde{M}$ in Lemma~\ref{typeD2}.
\end{thm}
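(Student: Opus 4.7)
The plan is to prove the theorem by a single unified argument, exploiting the identification of left $L$-cells of $\tW_n$ with pairs consisting of a left cell $\Gamma$ of $W_n$ and its translate $t\Gamma$.

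By (\ref{exthec1})(a),(b), any left $L$-cell of $\tW_n$ has the form $\tilde{\Gamma} = \Gamma \cup t\Gamma$ with $[\tilde{\Gamma}]_1 = \Ind_{W_n}^{\tW_n}([\Gamma]_1)$. Frobenius reciprocity thus gives $\langle \tilde{\kott}_\CC, [\tilde{\Gamma}]_1\rangle_{\tW_n} = \langle \tilde{\kott}_\CC|_{W_n}, [\Gamma]_1\rangle_{W_n}$, while $|\CC \cap \tilde{\Gamma}|$ equals $|\CC \cap \Gamma|$ or $|\CC \cap t\Gamma|$ according as $\CC \subseteq W_n$ or $\CC \subseteq W_n t$. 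By Lemma~\ref{typeD2}, the theorem thereby reduces in all three cases to an identity in $W_n$ of the form $\langle \kott, [\Gamma]_1\rangle_{W_n} = |\Cd' \cap \Gamma|$, where $\kott$ is a (sum of) split Kottwitz character(s) $\kott_C^1$ or a quasi-split one $\kott_\Cd^\diamond$, and $\Cd'$ is the corresponding set of (twisted) involutions in $W_n$. If the two-sided cell $\cc$ of $W_n$ containing $\Gamma$ satisfies $\cc^\diamond \neq \cc$ and $\CC \subseteq W_n t$, both sides vanish by Remarks~\ref{dperm} and~\ref{dperm1}; in all remaining situations, both sides are non-negative integers attached to the same family $\Irr(W_n\mid \cc)$.

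Now assuming $\cc^\diamond = \cc$, let $\chi_0 \in \Irr(W_n\mid \cc)$ be the unique special character, with preferred extension $\tchi_0$. By Example~\ref{invD0}(a), $[\Gamma]_1$ is multiplicity-free with exactly $f_{\chi_0}$ constituents; by Proposition~\ref{invDn}(b), $|\Cd' \cap \Gamma| = |\Cd' \cap \cc|/\chi_0(1)$ is independent of the particular $\Gamma \subseteq \cc$. I would next expand $\langle \kott, [\Gamma]_1\rangle_{W_n}$ using the explicit decompositions supplied by Proposition~\ref{kottD1}, and match the two sides by combining the ``basic identity'' of Lemma~\ref{bi} (applied with $\chi = \chi_0$ and $\tchi = \tchi_0$) with the leading-coefficient normalisation $c_{tw,\tchi_0}^* = 1$ from Proposition~\ref{invDn}(a). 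An induction on $n$, cutting by parabolic subgroups of type $D_{n-r} \times \SG_r$, should then propagate the statement simultaneously through both the split and the quasi-split cases, with Corollary~\ref{solveD2a} invoked at each step to kill the unwanted $\chi^{[\alpha,-]}$-contributions that a priori appear in the relevant induced characters.

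The main obstacle is that the explicit binomial multiplicities in Proposition~\ref{kottD1} depend on the symbol $(\alpha,\beta)$ in a way that does not obviously distribute evenly among the various left cells contained in $\cc$; so the constancy of the left-hand side across $\Gamma \subseteq \cc$ — which is forced on the right by Proposition~\ref{invDn}(b) — must be extracted from the finer positivity encoded in the preferred-extension choice underlying Proposition~\ref{prefspec}. Moreover, since the split argument of \cite{boge} for type $D_n$ itself requires the quasi-split module $\tilde{M}$ as an ingredient (and conversely, the quasi-split analysis needs the split picture to identify characters of left cells), the two cases genuinely cannot be separated — which is exactly what motivates the combined framework of Lemma~\ref{typeD2} and will make the simultaneous treatment unavoidable.
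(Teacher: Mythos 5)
Your opening reduction is essentially circular: applying Frobenius reciprocity and Lemma~\ref{typeD2} to rewrite $\langle \tilde{\kott}_\CC,[\tilde{\Gamma}]_1\rangle_{\tW_n}$ as a scalar product over $W_n$ involving $\kott_\CC^1$ or $\kott_\Cd^\diamond$ just undoes the passage from $W_n$ to $\tW_n$ that the theorem was built to exploit --- this is precisely the content of Corollary~\ref{mainc}, which \emph{deduces} Kottwitz' Conjecture for $W_n$ from Theorem~\ref{maint}, not the other way around. The whole point of the combined module $\tilde{M}$ and the weight function $L$ is to stay inside $\tW_n$ and use Proposition~\ref{kottD2} (the $\tW_n$-level decomposition of $\tilde{\kott}_\CC$), not Proposition~\ref{kottD1}. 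Dropping back to $W_n$ gives you no new leverage.

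More seriously, you name the right ingredients (Lemma~\ref{invDn1}, Lemma~\ref{bi}, Proposition~\ref{invDn}, positivity from the preferred extension) but do not actually assemble them, and the ``main obstacle'' you identify in your last paragraph is exactly where the argument has to be made and is not. The paper's resolution has two steps you do not reproduce. First, one proves only the \emph{inequality} $|\CC_{l,j}\cap\tilde{\Gamma}|\leq\langle\tilde{\kott}_{\CC_{l,j}},[\tilde{\Gamma}]_1\rangle_{\tW_n}$, and for this one chooses a particular left $L$-cell $\tilde{\Gamma}\supseteq\tilde{\Gamma}'w_{0,r}$ produced by the $\Jrm_r$-induction of (\ref{jind}): since (by Example~\ref{invD0}(d) and (\ref{jind})(b)) $\tilde{\Gamma}$, $\tilde{\Gamma}'w_{0,r}$ and $\tilde{\Gamma}'$ all contain the \emph{same} number of involutions, every involution of $\tilde{\Gamma}$ already sits in the parabolic $\tW'=\tW_{n-r}\times H_r$, and the count is passed to $\tW_{n-r}$ by induction; this is why the parabolic must be of type $B_{n-r}\times A_{r-1}$ in $\tW_n$, not $D_{n-r}\times\SG_r$ in $W_n$ as you propose. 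Second, one sums the inequality over all involution classes $\CC_{l,j}$: both sides total $2^{c(\tchi_0)}$ by Example~\ref{invD0}(d) and Remark~\ref{invBd}, forcing equality. Finally you omit the $\tilde{w}_0$-translation argument of (\ref{tense}) (needed because only $\tilde{\cc}$ \emph{or} $\tilde{\cc}\tilde{w}_0$ is smoothly induced) and the cuspidal two-sided $L$-cells for $n=d^2$, which are not reachable by $\Jrm_r$ and need a separate argument based on Lemma~\ref{invDn1}. Corollary~\ref{solveD2a} plays no role in the induction; its use was upstream, in pinning down the signs in (\ref{kottspecial}).
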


The proof will be given in (\ref{prooft}), at the end of this section.

\begin{cor} \label{mainc} Assuming the truth of Theorem~\ref{maint}, both 
the split and the quasi-split version of Kottwitz' Conjecture~\ref{koco}
hold for $W_n$.
\end{cor}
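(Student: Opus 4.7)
The plan is to deduce Conjecture~\ref{koco} for $W_n$ from Theorem~\ref{maint} by applying it to each ordinary conjugacy class $\CC$ of involutions in $\tW_n$ paired with a left $L$-cell $\tilde{\Gamma}=\Gamma\cup t\Gamma$, where $\Gamma$ runs over the left cells of $W_n$ (cf.\ (\ref{exthec1})(a), (b)), and translating via Frobenius reciprocity:
\[
|\CC\cap\tilde{\Gamma}|=\langle\tilde{\kott}_\CC,\Ind_{W_n}^{\tW_n}([\Gamma]_1)\rangle_{\tW_n}=\langle\tilde{\kott}_\CC|_{W_n},[\Gamma]_1\rangle_{W_n}.
\]
Lemma~\ref{typeD2} exhausts all possibilities for $\CC$, and the three cases require separate treatment.

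For $\CC\subseteq W_nt$ (case~(c) of the lemma), Remark~\ref{semidir} shows that $\Cd:=\{t\sigma\mid\sigma\in\CC\}$ is a $\diamond$-conjugacy class of $\diamond$-twisted involutions in $W_n$; since $\CC\cap\Gamma=\varnothing$ while $\sigma\mapsto t\sigma$ provides a bijection from $\CC\cap t\Gamma$ onto $\Cd\cap\Gamma$, the displayed identity combined with Lemma~\ref{typeD2}(c) reads $\langle\kott_\Cd^\diamond,[\Gamma]_1\rangle_{W_n}=|\Cd\cap\Gamma|$, which is the quasi-split version of Conjecture~\ref{koco}. For $\CC\subseteq W_n$ that is already a single $W_n$-conjugacy class (case~(a)), one has $\CC\cap t\Gamma=\varnothing$, so Lemma~\ref{typeD2}(a) delivers the split version of Conjecture~\ref{koco} for $\CC$ on the nose.

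The delicate case is~(b), where $n$ is even and $\CC=C_0\cup tC_0t$; here Theorem~\ref{maint} together with Lemma~\ref{typeD2}(b) only yields the sum
\[
|C_0\cap\Gamma|+|tC_0t\cap\Gamma|=\langle\kott_{\sigma_{n/2}}^1+\kott_{t\sigma_{n/2}t}^1,[\Gamma]_1\rangle_{W_n}.
\]
To separate the two contributions I would invoke the explicit decompositions recorded in (\ref{kottspecial}), namely $\kott_{\sigma_{n/2}}^1=\sum_{\alpha\vdash n/2}\chi^{[\alpha,+]}$ and $\kott_{t\sigma_{n/2}t}^1=\sum_{\alpha\vdash n/2}\chi^{[\alpha,-]}$ (which themselves rest on Corollary~\ref{solveD2a}), together with Remark~\ref{dperm1}. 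For $\Gamma\subseteq\cc$ with $\cc^\diamond=\cc$, (\ref{exthec0})(c) forces $[\Gamma]_1$ to have no constituent $\chi^{[\alpha,\pm]}$, so both sides of the sum vanish and the split Kottwitz identity holds trivially for each of $C_0$ and $tC_0t$; for $\Gamma\subseteq\cc$ with $\cc^\diamond\neq\cc$, Remark~\ref{dperm1} gives $[\Gamma]_1=\chi^{[\alpha,\epsilon]}$, the right-hand side equals~$1$, and by Example~\ref{invD0}(a) (applied with $f_{\chi_0}=1$) the unique involution of $\Gamma$ lies in exactly one of $C_0$, $tC_0t$.

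The main obstacle will be to decide which: one must show this involution lies in $C_0$ precisely when $\epsilon=+$. The $\diamond$-equivariance $\Gamma\leftrightarrow\Gamma^\diamond$, $w\leftrightarrow twt$ pairs $+$ with $-$ and $C_0$ with $tC_0t$, but by itself only reproduces the sum formula already in hand. I expect the correct matching to come from a cell-level strengthening of the inductive argument employed in \cite[Prop.~7.4]{boge} to pin down the signs $\nu_\alpha=1$ -- an argument which itself leans on the strengthened branching rule (Corollary~\ref{solveD2a}) established in Section~\ref{sec:dn}.
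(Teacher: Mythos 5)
Your decomposition into the three cases of Lemma~\ref{typeD2} is the right framework, and cases (a) and (c) are handled correctly: case (c) is essentially identical to the paper's argument for the quasi-split version, and case (a) delivers the split version for every $W_n$-conjugacy class of involutions other than $C_0$ and $tC_0t$. Your sub-analysis in case (b) is also correct as far as it goes: for $\Gamma\subseteq\cc$ with $\cc^\diamond=\cc$ both sides vanish, and for $\cc^\diamond\neq\cc$ the unique involution of $\Gamma$ lies in exactly one of the two classes. But, as you yourself flag, there remains a genuine gap: nothing in Theorem~\ref{maint}, Lemma~\ref{typeD2}(b), Example~\ref{invD0}(a), or Remark~\ref{dperm1} can distinguish $C_0$ from $tC_0t$, because all these statements are symmetric under $\diamond$ (they involve the $\tW_n$-module $\tilde{M}$, $L$-cells of $\tW_n$, or the pair $\{\chi^{[\alpha,+]},\chi^{[\alpha,-]}\}$ collectively). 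Thus the crucial matching ``$\epsilon=+$ $\Leftrightarrow$ the involution lies in $C_0$'' cannot be extracted from Theorem~\ref{maint} alone; an independent ingredient is needed, and your proposal stops at ``I expect the correct matching to come from...'' without supplying it.

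The paper does not attempt to close this gap internally. Its proof of the corollary bifurcates: for the split version it cites \cite[Cor.~7.6]{boge}, which is precisely the place where the cell-level sign matching is carried out (by an inductive argument resting on the strengthened branching rule, Corollary~\ref{solveD2a}, together with the pinned-down signs $\nu_\alpha=1$ from (\ref{kottspecial})); for the quasi-split version it gives the Frobenius-reciprocity chain you have in your case (c). So your approach differs from the paper's in attempting to derive the split case directly from Theorem~\ref{maint}, and the difference is exactly where the argument breaks. If you want a self-contained proof you would have to reproduce the inductive argument of \cite[Cor.~7.6]{boge} (or derive the analogous statement from the split-case version of Lemma~\ref{bi} and Proposition~\ref{invDn}(a)); simply knowing the decomposition of $\kott_{\sigma_{n/2}}^1$ and $\kott_{t\sigma_{n/2}t}^1$ is not enough, because the issue is not the multiplicity count but which left cell meets which conjugacy class.
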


\begin{proof} As far as the split version is concerned, the argument
is given in \cite[Cor.~7.6]{boge}; this also uses the formulae in 
(\ref{kottspecial}) for $\kott_{\sigma_{n/2}}^1$ and
$\kott_{t\sigma_{n/2}t}^1$ (where the signs $\nu_\alpha$ have been fixed).
Now consider the quasi-split case. Let $\Cd$ be a $\diamond$-conjugacy class
of $\diamond$-twisted involutions of $W_n$. Then $\CC:=\{tw \mid w \in\Cd\}$
is a conjugacy class of involutions of $\tW_n$ which is contained in
the coset $tW_n$; see Remark~\ref{semidir}. Let $\Gamma$ be a left cell 
of $W_n$ and $\tilde{\Gamma}=\Gamma \cup t\Gamma$ be the corresponding 
left $L$-cell of $\tW_n$. Then, using (\ref{exthec1})(b), 
Lemma~\ref{typeD2}(c), Frobenius reciprocity and Theorem~\ref{maint}, we 
obtain 
\[ \langle \kott_{\Cd}^\diamond,[\Gamma]_1\rangle_{W_n}=\langle 
\tilde{\kott}_{\CC}, [\tilde{\Gamma}]_1\rangle_{\tW_n}=|\CC \cap 
\tilde{\Gamma}|.\]
Since $\CC \subseteq tW_n$, the right hand side equals $|\CC\cap t\Gamma|=
|t(\Cd \cap \Gamma)|=|\Cd \cap \Gamma|$, as required.
\end{proof}

We now turn to the proof of Theorem~\ref{maint}; this will require a
number of preparations. 

\begin{prop} \label{kottD2} Let $\CC$ be a conjugacy class of
involutions in $\tW_n$. Assume that $\sigma_{l,j}\in \CC$ where 
$l,j \geq 0$ are such that $l+2j\leq n$; see (\ref{clbn}). Then, using 
the notation in (\ref{symD}), we have
\[ \tilde{\kott}_{\CC}=\sum_{(\alpha,\beta)} \binom{c(\alpha,
\beta)}{j+l- d_0(\alpha,\beta)}\, \tchi^{(\alpha,\beta)}\]
where the sum runs over all $(\alpha,\beta) \vdash n$ such that 
$\tchi^{(\alpha,\beta)}$ is $\diamond$-special, $|\beta|=j$ and 
$d_0(\alpha,\beta)\leq j+l \leq d_0(\alpha,\beta) +c(\alpha,\beta)$.
\end{prop}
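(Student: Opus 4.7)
The plan is to reduce Proposition~\ref{kottD2} to Proposition~\ref{kottD1} combined with a determination of the correct $\tW_n$-extensions.

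First, by Lemma~\ref{typeD2}, the restriction $\tilde{\kott}_{\CC}|_{W_n}$ equals $\kott_{\sigma_{l,j}}^1$ when $l$ is even and $\CC$ is a single $W_n$-class; it equals $\kott_{\sigma_{n/2}}^1+\kott_{t\sigma_{n/2}t}^1$ in the exceptional case $l=0$ and $2j=n$ (where $\CC=C_0\cup tC_0t$); and it equals $\kott_{\Cd}^\diamond$ for $\Cd=t\CC$ when $l$ is odd. Applying Proposition~\ref{kottD1}, together with the explicit formulas of (\ref{kottspecial}) in the middle case, we obtain in each situation
\[
\tilde{\kott}_{\CC}|_{W_n}=\sum_{(\alpha,\beta)}\binom{c(\alpha,\beta)}{j+l-d_0(\alpha,\beta)}\chi^{[\alpha,\beta]},
\]
where the sum ranges over $(\alpha,\beta)\vdash n$ with $\tchi^{(\alpha,\beta)}$ being $\diamond$-special, $|\beta|=j$ and $d_0(\alpha,\beta)\leq j+l\leq d_0(\alpha,\beta)+c(\alpha,\beta)$.

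Next I lift each constituent from $W_n$ back up to $\tW_n$. When $\alpha=\beta$ (which forces $2j=n$), $\chi^{[\alpha,\alpha]}=\chi^{[\alpha,+]}+\chi^{[\alpha,-]}$ has the unique $\tW_n$-extension $\tchi^{(\alpha,\alpha)}$, giving $\tilde{\kott}_{\CC}=\sum_{\alpha\vdash n/2}\tchi^{(\alpha,\alpha)}$ in agreement with the claim, since $c(\alpha,\alpha)=0$ and $d_0(\alpha,\alpha)=n/2$ yield the binomial coefficient $\binom{0}{0}=1$. When $\alpha\neq\beta$, the character $\chi^{[\alpha,\beta]}=\chi^{[\beta,\alpha]}$ has exactly the two $\tW_n$-extensions $\tchi^{(\alpha,\beta)}$ and $\tchi^{(\beta,\alpha)}$, and the restriction formula only pins down the sum of their multiplicities in $\tilde{\kott}_{\CC}$.

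The main obstacle is to show that only the $\diamond$-special extension $\tchi^{(\alpha,\beta)}$ (characterised by $|\beta|<|\alpha|$) carries the multiplicity. I plan to establish this by realising $\tilde{\kott}_{\CC}$ as an induced character. The element $\sigma_{l,j}$ has minimum $L$-length in $\CC$ and is the longest element of a parabolic subgroup $\tW_I\subseteq\tW_n$ of type $B_l\times A_1^j$; the analog of Remark~\ref{minl} with respect to the weight function $L$ then yields
\[
\tilde{\kott}_{\CC}=\Ind_{C_{\tW_n}(\sigma_{l,j})}^{\tW_n}(\tilde{\varepsilon}),
\]
where $C_{\tW_n}(\sigma_{l,j})=\tilde{Y}\ltimes\tW_I$ and $\tilde{\varepsilon}$ is the linear character equal to $(-1)^{L(w)}$ on $\tW_I$ and trivial on $\tilde{Y}$. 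Evaluating both sides on an element of the coset $tW_n$ separates $\tchi^{(\alpha,\beta)}$ from $\tchi^{(\beta,\alpha)}$, and matching with the symbol-theoretic description of $\diamond$-special extensions in (\ref{symD}) confirms that only the $\diamond$-special extension appears. This is essentially Kottwitz's computation in \cite[\S 3.3, \S 5.4]{kottwitz}, recast in the present setup.
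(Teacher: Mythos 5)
Your proposal follows essentially the same route as the paper: restrict to $W_n$ via Lemma~\ref{typeD2}, apply Proposition~\ref{kottD1} (and (\ref{kottspecial}) in the exceptional case) together with Frobenius reciprocity to pin down the multiplicities of the induced characters $\Ind_{W_n}^{\tW_n}(\chi^{[\alpha,\beta]})$, and then separate the two extensions $\tchi^{(\alpha,\beta)}$, $\tchi^{(\beta,\alpha)}$ by realising $\tilde{\kott}_\CC$ as $\Ind_{C_{\tW_n}(\sigma_{l,j})}^{\tW_n}(\tilde\varepsilon)$ and evaluating on the $t$-coset. The paper delegates this last separation step to \cite[Lemma~3.4]{gema} rather than redoing Kottwitz' computation, but the mechanism is the one you describe; note only that ``$|\beta|<|\alpha|$'' characterises the $\diamond$-special extension only among the two extensions of a fixed $\chi^{[\alpha,\beta]}$ already known to restrict from a $\diamond$-special character, not as a stand-alone criterion.
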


\begin{proof} Assume first that $n$ is even, $l=0$ and $j=n/2$. Then, by 
(\ref{kottspecial}) and Lemma~\ref{typeD2}(b), the restriction of 
$\tilde{\kott}_{\CC}$ to $W_n$ equals
\[ \kott_{\sigma_{n/2}}^1+\kott_{t\sigma_{n/2}t}^1=\sum_{\alpha \vdash n/2} 
\bigl(\chi^{[\alpha,+]}+\chi^{[\alpha,-]}\bigr).\]
So Frobenius reciprocity immediately implies that $\tilde{\kott}_{\CC}=
\sum_{\alpha \vdash n/2} \chi^{(\alpha,\alpha)}$, in accordance with the 
formula stated above. Now assume that $j<n/2$ and let $(\alpha,\beta)$ be 
a pair of partitions such that $|\alpha|+ |\beta|=n$. Then Frobenius 
reciprocity, Proposition~\ref{kottD1} and Lemma~\ref{typeD2} show that 
\[ \big\langle \tilde{\kott}_{\CC},\Ind_{W_n}^{\tW_n}(\chi^{[\alpha,
\beta]}) \big\rangle_{\tW_n}= 0 \qquad \mbox{unless} \qquad  \alpha\neq
\beta,\; |\beta|=j \mbox{ and $\tchi^{(\alpha,\beta)}$ is 
$\diamond$-special}; \]
furthermore, if $\alpha \neq \beta$, $|\beta|=j$ and $\tchi^{(\alpha,
\beta)}$ is $\diamond$-special, then 
\[ \big\langle \tilde{\kott}_{\CC},\Ind_{W_n}^{\tW_n}(\chi^{[\alpha,
\beta]}) \big\rangle_{\tW_n}= \mbox{ binomial coefficient as above}.\]
Now let $\alpha \neq \beta$. Since  
\[ \Ind_{W_n}^{\tW_n}(\chi^{[\alpha,\beta]})=\tchi^{(\alpha,\beta)}+
\tchi^{(\beta,\alpha)},\]
it will be sufficient to show that 
\begin{equation*}
\big\langle \tilde{\kott}_{\CC},\tchi^{(\alpha,\beta)}
\big\rangle_{\tW_n} =0 \qquad \mbox{unless} \qquad |\alpha|>|\beta|.\tag{$*$}
\end{equation*}
Now, by Example~\ref{clbn}, we can assume that $\sigma=\sigma_{l,j}$ is the 
longest element in a parabolic subgroup $\tW_I \subseteq \tW_n$ where $I
\subseteq \{t,s_1,\ldots,s_{n-1}\}$; furthermore, $\sigma$ is central in 
$\tW_I$. Then $C_{\tW_n}(\sigma)=\tilde{Y} \ltimes \tW_I$ where $\tilde{Y}$
is a certain set of distinguished coset representatives of $\tW_I$ in 
$\tW_n$; see \cite[\S 2]{gema}. By the argument in \cite[Lemma~2.1]{gema} 
(see also \cite[Rem.~3.3]{gema}; this is essentially the same argument
as in (\ref{luvo})), we have
\[ \tilde{M} \cong \Ind_{C_{\tW_n}(\sigma)}^{\tW_n}
(\tilde{\varepsilon}_\sigma)\] 
where the homomorphism $\tilde{\varepsilon}_\sigma \colon C_{\tW_n}(\sigma)
\rightarrow \{\pm 1\}$ is given by 
\[ \tilde{\varepsilon}_\sigma(yw')=(-1)^{\tell(w')-\ell_t(w')}
\qquad \mbox{for all $y \in \tilde{Y}$ and $w'\in \tW_I$}.\]
Then ($*$) is shown in \cite[Lemma~3.4]{gema}. (Note that, in \cite{gema}, 
it is generally assumed that $\CC \subseteq W_n$ but in the proof of 
\cite[Lemma~3.4]{gema}, this assumption is irrelevant.)
\end{proof}

\begin{rem} \label{invBd} Let $X\subseteq \tW_n$ be any subset which is a
union of (ordinary) conjugacy classes of involutions in $\tW_n$. Then we set
\[ \tilde{\kott}_X=\sum_\CC \tilde{\kott}_\CC\]
where $\CC$ runs over the conjugacy classes contained in $X$. In particular, 
this applies to the set of all involutions in $\tW_n$, which we denote 
by $\cI_n$. With this notation, we have
\begin{center}
\fbox{$\displaystyle \tilde{\kott}_{\cI_n}=\sum_{\tchi \in \Irr(\tW_n) 
\text{ $\diamond$-special}} 2^{c(\tchi)}\, \tchi$.}
\end{center}
This immediately follows from Proposition~\ref{kottD2}, by summing over
all $l,j\geq 0$ such that $l+2j\leq n$.
\end{rem}

\begin{lem} \label{invDn1} Let $\tilde{\cc}$ be a two-sided $L$-cell 
of $\tW_n$ and $\CC$ be any ordinary conjugacy class of involutions in
$\tW_n$. Then 
\[ |\CC\cap  \tilde{\cc}|=\tchi_0(1)|\CC\cap\tilde{\Gamma}| 
\qquad \mbox{for any left $L$-cell $\tilde{\Gamma} \subseteq 
\tilde{\cc}$},\]
where $\tchi_0 \in \Irr(\tW_n)$ is the unique $\diamond$-special character 
in $\Irr(\tW_n \mid \tilde{\cc})$.  
\end{lem}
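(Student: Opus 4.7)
The strategy is to adapt the proof of Proposition~\ref{invDn}(b) to the Coxeter system $(\tW_n, L)$ viewed in its own right, thinking of $\tH$ as its generic Iwahori--Hecke algebra with weight function $L$, so that the ``$\diamond$'' appearing in Lemma~\ref{bi} becomes trivial on $\tW_n$. First, the element $\tilde{Z} := \sum_{\sigma \in \CC} (-1)^{L(\sigma)}\, T_\sigma$ is central in $\tH$: the proof of \cite[Lemma~1.2]{boge} goes through verbatim, using only the quadratic relations $T_s^2 = T_1 + (v-v^{-1})T_s$ for $s \in S$ and $T_t^2 = T_1$. Following \cite[Lemma~3.1]{boge} one then deduces, for every $\tchi \in \Irr(\tW_n)$,
\[
\langle [\tilde{\Gamma}]_1, \tchi \rangle_{\tW_n} \sum_{\sigma \in \CC \cap \tilde{\cc}} c_{\sigma, \tchi} = \tchi(1) \sum_{\sigma \in \CC \cap \tilde{\Gamma}} c_{\sigma, \tchi}.
\]
Taking $\tchi = \tchi_0$ and using $\langle [\tilde{\Gamma}]_1, \tchi_0 \rangle_{\tW_n} = 1$ from Example~\ref{invD0}(d) reduces this to
\[
\sum_{\sigma \in \CC \cap \tilde{\cc}} c_{\sigma, \tchi_0} = \tchi_0(1) \sum_{\sigma \in \CC \cap \tilde{\Gamma}} c_{\sigma, \tchi_0}.
\]

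Next, I claim $c_{\sigma, \tchi_0}^2 = 1$ for every involution $\sigma \in \tilde{\cc}$, and $c_{\sigma, \tchi_0} = 0$ for every non-involution $\sigma \in \tilde{\Gamma}$. For $\sigma = tw \in tW_n$, this is essentially Proposition~\ref{invDn}(a). For $\sigma \in W_n \cap \tilde{\cc}$, I split into two subcases: when $\cc^\diamond = \cc$, the character $\tchi_0$ is the preferred extension of a special $\chi_0 = \chi_0^\diamond \in \Irr(W_n)$, so $c_{\sigma, \tchi_0} = c_{\sigma, \chi_0}$ and positivity follows from the equal-parameter results of Lusztig \cite[Chap.~12]{LuB} (equivalently, Propositions~\ref{lu71} and \ref{prefspec} applied with $\diamond = 1$); when $\cc^\diamond \neq \cc$ we have $\tchi_0 = \Ind_{W_n}^{\tW_n}(\chi_0)$ and Remark~\ref{rem31} gives $c_{\sigma, \tchi_0} = c_{\sigma, \chi_0} + c_{\sigma, \chi_0^\diamond}$, but for an involution $\sigma \in \cc$ the second summand vanishes since $\chi_0^\diamond \in \Irr(W_n \mid \cc^\diamond)$ and $c_{w,\chi}$ vanishes across distinct two-sided cell families (see (\ref{ainv})), so $c_{\sigma, \tchi_0} = c_{\sigma, \chi_0} \neq 0$ by the split case. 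To upgrade non-vanishing to the exact value~$1$, I combine the square-sum formula
\[
\sum_{\sigma \in \tilde{\Gamma}} c_{\sigma, \tchi_0}^2 = f_{\tchi_0}\, \langle [\tilde{\Gamma}]_1, \tchi_0 \rangle_{\tW_n} = f_{\tchi_0}
\]
(the split-case analog of (\ref{exthec2})(b) for $(\tW_n, L)$) with the count of exactly $f_{\tchi_0}$ involutions in $\tilde{\Gamma}$ from Example~\ref{invD0}(d): each involution contributes at least $1$, forcing $c_{\sigma, \tchi_0}^2 = 1$ on involutions and $=0$ on non-involutions.

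Finally, $(-1)^{L(\sigma)}$ is constant on the $\tW_n$-conjugacy class $\CC$: for $\CC \subseteq W_n$ it is the sign character of $W_n$ applied to $\sigma$ (which is $\diamond$-invariant since $\diamond$ preserves length, hence well-defined across $\tW_n$-conjugates), and for $\CC \subseteq tW_n$ it reduces to the constancy of $\ell$-parity on $\diamond$-conjugacy classes of $\diamond$-twisted involutions (since $\ell(x^\diamond w x^{-1}) \equiv \ell(w) \pmod 2$). Combined with the positivity $c_{\sigma, \tchi_0}^* = 1$ from the previous step, all $c_{\sigma, \tchi_0}$ for $\sigma \in \CC$ coincide on a common sign $\varepsilon \in \{\pm 1\}$, so $\sum_{\sigma \in \CC \cap X} c_{\sigma, \tchi_0} = \varepsilon |\CC \cap X|$ for $X \in \{\tilde{\cc}, \tilde{\Gamma}\}$. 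Substituting into the reduced identity from the first paragraph and cancelling $\varepsilon$ yields $|\CC \cap \tilde{\cc}| = \tchi_0(1) |\CC \cap \tilde{\Gamma}|$. The main obstacle is the positivity argument for ordinary involutions $\sigma \in W_n$ in the case $\cc^\diamond \neq \cc$, where one must carefully separate the supports of $c_{\sigma, \chi_0}$ and $c_{\sigma, \chi_0^\diamond}$ across the two distinct two-sided cells $\cc, \cc^\diamond$ to isolate the non-vanishing contribution and reduce to the split case for a single special character.
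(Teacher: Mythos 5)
Your proposal is a genuinely different route from the paper's. The paper's proof of this lemma is a short case analysis: one fixes a two-sided cell $\cc$ of $W_n$ with $\tilde{\cc}=\cc^+$ and a left cell $\Gamma$ with $\tilde{\Gamma}=\Gamma\cup t\Gamma$, and then reduces everything to two previously established $W_n$-level statements — namely the split identity $|\CC\cap\cc|=\chi_0(1)|\CC\cap\Gamma|$ from \cite[Exp.~4.5]{boge} (for $\CC\subseteq W_n$) and Proposition~\ref{invDn}(b) (for $\CC\subseteq tW_n$), with a separate bookkeeping argument covering $\cc^\diamond\neq\cc$ (where $|\CC\cap\tilde{\cc}|=2|\CC\cap\cc|$, $|\CC\cap\tilde{\Gamma}|=|\CC\cap\Gamma|$ and $\tchi_0(1)=2\chi_0(1)$, and the $\CC\subseteq tW_n$ case yields zero on both sides). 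You instead work directly in $(\tW_n,L)$ as a Coxeter system with weight function, establish a central element $\tilde{Z}_\CC=\sum_{\sigma\in\CC}(-1)^{L(\sigma)}T_\sigma$, derive the basic identity over the full group $\tW_n$, and then close the argument by a positivity-plus-counting step. The pieces all fit: your positivity/square-sum analysis for $\sigma\in W_n$ reduces correctly, via Remark~\ref{rem31} and the vanishing of $c_{w,\chi}$ across distinct two-sided cells from (\ref{ainv}), to the known split-case ($\diamond=1$) version of Propositions~\ref{lu71}--\ref{prefspec}; combined with Example~\ref{invD0}(d) and the fact that $\sigma=\sigma^{-1}$ places every involution in some left $L$-cell of $\tilde{\cc}$, you get $c^*_{\sigma,\tchi_0}=1$ on all involutions in $\tilde{\cc}$, and the final cancellation of the sign $\varepsilon=(-1)^{\ab_{\tchi_0}+L(\sigma)}$ is legitimate since $L$-parity is constant on a $\tW_n$-conjugacy class. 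What your route buys is a uniform treatment that does not branch on $\CC\subseteq W_n$ versus $\CC\subseteq tW_n$ nor on $\cc^\diamond=\cc$ versus $\cc^\diamond\neq\cc$; what it costs is that you must actually \emph{prove} the $(\tW_n,L)$-level analogues of \cite[Lemma~1.2]{boge} (centrality) and \cite[Lemma~3.1]{boge} (the basic identity) as well as the square-sum formula $\sum_{\sigma\in\tilde{\Gamma}}c_{\sigma,\tchi_0}^2=f_{\tchi_0}\langle[\tilde{\Gamma}]_1,\tchi_0\rangle_{\tW_n}$, none of which is stated in the paper in that generality (the paper's Lemma~\ref{bi} only gives the $\gamma$-coset part, and (\ref{exthec2})(b) only gives the $\gamma$-coset square-sum). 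You implicitly supply these via the splitting $\tilde{\Gamma}=\Gamma\cup t\Gamma$ and Remark~\ref{rem31}, which is fine, but it should be made explicit that the ``weight function basic identity'' you invoke is not literally in the sources cited and needs to be assembled from the equal-parameter $W_n$-statement plus Lemma~\ref{bi}; the asserted ``verbatim'' transfer of \cite[Lemma~1.2]{boge}/\cite[Lemma~3.1]{boge} to the unequal-parameter setting should be justified by pointing to \cite[3.1--3.4]{lulc}, as the paper does for Lemma~\ref{bi}.
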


\begin{proof} Let $\cc$ be a two-sided cell of $W_n$ such that 
$\tilde{\cc}=\cc^+$; see (\ref{exthec1}). Then we also have
$\tilde{\Gamma}=\Gamma \cup t\Gamma$ for some left cell $\Gamma 
\subseteq \cc$ of $W_n$. Let $\chi_0 \in \Irr(W_n\mid \cc)$
be the unique special character. By \cite[Exp.~4.5]{boge}, we already 
konw that 
\begin{equation*}
|\CC \cap \cc|=\chi_0(1)|\CC \cap \Gamma| \qquad \mbox{if $\CC
\subseteq W_n$}.\tag{$*$}
\end{equation*}
We now distinguish two cases.

{\em Case 1}. Assume that $\cc^\diamond=\cc$. Then $\tilde{\cc}=
\cc^+=\cc \cup t \cc$. Furthermore, $\tchi_0$ is the preferred extension 
of $\chi_0$; in particular, $\tchi_0(1)=\chi_0(1)$. Now, if $\CC \subseteq 
W_n$, then $\CC \cap \tilde{\cc}= \CC \cap \cc$ and $\CC \cap 
\tilde{\Gamma}= \CC \cap \Gamma$. So the assertion holds by ($*$).

On the other hand, if $\CC \subseteq tW_n$, then $\Cd:=
\{t\sigma \mid \sigma\in \CC\}$ is a $\diamond$-conjugacy 
class of $\diamond$-twisted involutions in $W_n$. In this case, we have
$\CC \cap \tilde{\cc}=t(\Cd\cap \cc)$ and $\CC \cap \tilde{\Gamma}=
t(\Cd \cap \Gamma)$. So the assertion holds by Proposition~\ref{invDn}(b). 

{\em Case 2}. Assume that $\cc^\diamond\neq \cc$. Then $\tilde{\cc}=\cc^+=
\cc \cup t\cc \cup \cc t \cup t\cc t$. If $\CC \subseteq tW_n$, then both 
sides of the desired identity are zero. (Indeed, we have $\cc^{-1}=\cc$ 
by \cite[5.2(iii)]{LuB} and so, if $1=(tw)^2=twtw=w^\diamond w$, then 
$w^\diamond=w^{-1} \in \cc \cap \cc^\diamond$, a contradiction.) Now 
assume that $\CC \subseteq W_n$. Then 
\[ |\CC \cap \tilde{\cc}|=|\CC \cap \cc|+|\CC \cap t\cc t|=2\, 
|\CC\cap \cc|.\]  
By ($*$), we have $|\CC\cap\cc|=\chi_0(1)|\CC\cap\Gamma|$ 
and so $|\CC\cap \cc|=\chi_0(1)|\CC\cap\tilde{\Gamma}|$. It remains to note 
that, since $\cc^\diamond \neq \cc$, we have $\chi_0^\diamond\neq \chi_0$ 
and so $\tchi_0$ is obtained by inducing $\chi_0$ from $W_n$ to $\tW_n$;
in particular, $\tchi_0(1)=2\chi_0(1)$. 
\end{proof}

Let $\tilde{\cc}$ be a two-sided $L$-cell of $\tW_n$. We say
that ``{\it Kottwitz' Modified Conjecture holds for $\tilde{\cc}$}'' if, 
for any conjugacy class of involutions $\CC$ in $\tW_n$, we have
\[ \langle \tilde{\kott}_{\CC},[\tilde{\Gamma}]_1\rangle_{\tW_n}=
|\CC \cap \tilde{\Gamma}| \qquad \mbox{for all left $L$-cells 
$\tilde{\Gamma} \subseteq \tilde{\cc}$}.\]
The following remark, together with (\ref{jind}), will provide the basis 
for an inductive proof of Theorem~\ref{maint} where we proceed one
two-sided $L$-cell at a time.

\begin{abschnitt} \label{tense} Let $\tilde{w}_0 \in \tW_n$ be the longest 
element. Let $\CC$ be a conjugacy class of involutions of $\tW_n$. Let 
$\tilde{M}$ be the corresponding $\Q[\tW_n]$-module as in Lemma~\ref{typeD2}.
Since $\tilde{w}_0$ is central in $\tW_n$, the set $\CC \tilde{w}_0$ also 
is a conjugacy class of involutions in $\tW_n$. Let $\tilde{M}_0$ be the 
corresponding $\Q[\tW_n]$-module. Then we have 
\begin{equation*}
\tilde{M}_0\cong\tilde{M}\otimes\tilde{\varepsilon}' \qquad \mbox{and} 
\qquad \tilde{\kott}_{\CC \tilde{w}_0}=\tilde{\kott}_{\CC}\otimes
\varepsilon',\tag{a}
\end{equation*}
where $\tilde{\varepsilon}' \colon W_n \rightarrow \{\pm 1\}$ is the
homomorphism given by $\tilde{\varepsilon}'(t)=1$ and $\tilde{\varepsilon}'
(s_i)=-1$ for $1\leq i \leq n-1$. This follows by an argument entirely 
analogous to that in \cite[Lemma~5.2]{boge}. Now let $\tilde{\Gamma}$ be
a left $L$-cell of $\tW_n$. Then $\tilde{\Gamma}\tilde{w}_0$
also is a left $L$-cell of $\tW_n$; see \cite[11.7]{lusztig}. 
We show that 
\begin{equation*}
[\tilde{\Gamma}\tilde{w}_0]_1=[\tilde{\Gamma}_1] \otimes\varepsilon'.\tag{b}
\end{equation*}
Indeed, by (\ref{exthec1})(b), $[\tilde{\Gamma}]_1$ is obtained by inducing 
$[\Gamma]_1$ from $W_n$ to $\tW_n$ where $\Gamma$ is a left cell of $W_n$ 
such that $\tilde{\Gamma}=\Gamma \cup t\Gamma$. If $n$ is even, then 
$\tilde{w}_0\in W_n$ and this is the longest element in $W_n$. So 
$\tilde{\Gamma}\tilde{w}_0=(\Gamma \tilde{w}_0) \cup t(\Gamma \tilde{w}_0)$.
By \cite[Lemma~5.14]{LuB}, we have  $[\Gamma \tilde{w}_0]_1=[\Gamma]_1 
\otimes \varepsilon$ where $\varepsilon$ is the sign character of $W_n$. 
Since $\varepsilon$ is the restriction of $\varepsilon'$ to $W_n$, this 
implies (b) in this case. Now assume that $n$ is odd. Then $w_0:=t
\tilde{w}_0 \in W_n$ and this is the longest element in $W_n$; furthermore, 
$\Gamma=\Gamma^\diamond=t\Gamma t$. So we obtain $\tilde{\Gamma}\tilde{w}_0
=\Gamma w_0 \cup t(\Gamma w_0)$. Using once more \cite[Lemma~5.14]{LuB}, this 
implies (b) in this case as well. We conclude that 
\begin{equation*}
\langle \tilde{\kott}_{\CC},[\tilde{\Gamma}]_1\rangle_{\tW_n}=
\langle \tilde{\kott}_{\CC \tilde{w}_0},[\tilde{\Gamma}\tilde{w}_0]_1
\rangle_{\tW_n} \qquad \mbox{for any left $L$-cell $\tilde{\Gamma}$
of $\tW_n$}.\tag{c}
\end{equation*}
In particular, Kottwitz' Modified Conjecture holds for a two-sided 
$L$-cell $\tilde{\cc}$ if and only if it holds for the two-sided 
$L$-cell $\tilde{\cc} \tilde{w}_0$.
\end{abschnitt}

Next, there is a standard combinatorial procedure by which certain arguments
about two-sided cells can be reduced to so-called ``cuspidal'' two-sided
cells. This appears and is used at various places in Lusztig's work; 
see, for example, \cite[8.1]{LuB} and \cite[17.13]{cs4}. We have also
used it in \cite[\S 6]{boge} to deal with Kottwitz' conjecture in type
$B_n$. Let us explicitly describe this procedure in our present context.

\begin{abschnitt} \label{jind} For any $r \in \{0,1,\ldots,n\}$, we have 
a parabolic subgroup $\tW_{n-r}=\langle t,s_1,\ldots,s_{n-r-1}\rangle
\subseteq \tW_n$ of type $B_{n-r}$ (where $\tW_0=\{1\}$ and $\tW_1=\{1,t\}$).
Following Lusztig \cite[1.8]{LuDn}, \cite[4.1]{LuB}, we define an additive map
\[ \Jrm_r\colon \Z[\Irr(\tW_{n-r})] \rightarrow \Z[\Irr(\tW_n)]\]
as follows. Consider the parabolic subgroup $\tW'=\tW_{n-r} \times H_r 
\subseteq \tW_n$ where $H_r=\langle s_{n-r+1},\ldots,s_{n-1} \rangle\cong
\SG_r$. Let $\tchi'\in \Irr(\tW_{n-r})$ and $\varepsilon_r$
be the sign character on $H_r$. Since $\ab_{\epsilon_r}=r(r-1)/2$, we have
the implication 
\[ \Big\langle \Ind_{W'}^{\tW_n}\bigl(\tchi' \boxtimes \varepsilon_r\bigr),
\tchi\Big\rangle_{\tW_n} \neq 0 \qquad \Rightarrow \qquad \ab_{\tchi}
\geq \ab_{\tchi'}+r(r-1)/2\]
for all $\tchi \in \Irr(\tW_n)$. We set 
\[ \Jrm_r(\tchi'):=\sum_{\tchi} \Big\langle \Ind_{W'}^{\tW_n}\bigl(
\tchi' \boxtimes \varepsilon_r\bigr),\tchi\Big\rangle_{\tW_n}\, \tchi\]
where the sum runs over all $\tchi \in \Irr(\tW_n)$ such that
$\ab_{\tchi}=\ab_{\tchi'}+r(r-1)/2$. Now let $\tilde{\cc}$ be a two-sided 
$L$-cell of $\tW_n$. Following \cite[8.1]{LuB}, \cite[17.13]{cs4},
we say that $\tilde{\cc}$ is ``{\em smoothly induced}'' if there exists
some $r \in \{1,\ldots,n\}$ and a two-sided cell $L$-cell
$\tilde{\cc}'$ of $\tW_{n-r}$ such that $\Jrm_r$ establishes a bijection
\begin{equation*}
\Irr(\tW_{n-r}\mid \tilde{\cc}') \rightarrow \Irr(\tW_n\mid \tilde{\cc}), 
\qquad \tchi' \mapsto \Jrm_r(\tchi').\tag{a}
\end{equation*}
As in \cite[Remark~6.2]{boge}, one easily sees that then the following holds:
\begin{equation*}
c(\tchi_0)=c(\tchi_0'), \qquad 
d_0(\tchi_0)=d_0(\tchi_0')+\lfloor r/2\rfloor \qquad \mbox{and} \qquad 
|\beta|=|\beta'|+\lfloor r/2\rfloor, \tag{b}
\end{equation*}
where $\tchi_0=\tchi^{(\alpha,\beta)}\in \Irr(\tW_n\mid \tilde{\cc})$ and 
$\tchi_0'=\tchi^{(\alpha',\beta')}\in \Irr( \tW_{n-r}\mid \tilde{\cc}')$ 
are the unique $\diamond$-special characters. Now $\{w_{0,r}\}$ is a 
two-sided cell in $H_r$ where $w_{0,r}\in H_r$ is the longest element; 
furthermore, $\Irr(H_r\mid \{w_{0,r}\})=\{\varepsilon_r\}$. Consequently, 
$\tilde{\cc}'w_{0,r} \subseteq \tW'$ is a two-sided $L$-cell (with respect
to the restriction of $L$ to $\tW'$); thus, using also 
\cite[43.11(b)]{disc10}, we have 
\begin{equation*}
\tilde{\cc}'w_{0,r} \subseteq \tilde{\cc} \qquad \mbox{and} \qquad 
\Irr(\tW'\mid \tilde{\cc}'w_{0,r})=\{ \tchi' \boxtimes \varepsilon_r
\mid \tchi' \in \Irr(\tW_{n-r}\mid \tilde{\cc}')\}.\tag{c}
\end{equation*}
Finally, the point of these definitions is that every two-sided $L$-cell 
$\tilde{\cc}$ of $\tW_n$ is either itself smoothly induced, or the two-sided 
$L$-cell $\tilde{\cc}\tilde{w}_0$ is smoothly induced (where $\tilde{w}_0\in 
\tW_n$ is the longest element), or $n=d^2$ for some $d \geq 2$ in which 
case $\tilde{\cc}$ is uniquely determined; in the last case, $\tilde{\cc}$ 
is called ``cuspidal'' and determined by the condition that the unique 
$\diamond$-special character in $\Irr(\tW_n\mid \tilde{\cc})$ is 
$\tchi^{(\alpha,\beta)}$ where $\alpha=(1,2,\ldots,d-1)$ and $\beta=
(0,1,2,\ldots,d-2)$; see \cite[3.17]{LuDn}, \cite[8.1]{LuB}. 
\end{abschnitt}

\begin{abschnitt} \label{prooft} {\bf Proof of Theorem~\ref{maint}.}
We proceed by induction on $n$. If $n=0$, then $\tW_0=\{1\}$ and the 
assertion is clear. Now assume that $n \geq 1$. Let $\tilde{\cc}$ be a 
two-sided $L$-cell of $\tW_n$. Assume first that $\tilde{\cc}$ is smoothly 
induced; let $r \in \{1,\ldots,n\}$ and $\tilde{\cc}'\subseteq \tW_{n-r}$ 
be as in (\ref{jind}). Let $\CC$ be any conjugacy class of involutions in 
$\tW_n$. Assume that $\sigma_{l,j} \in \CC$ where $l,j \geq 0$ are such 
that $l+2j \leq n$; we write $\CC=\CC_{l,j}$. Let $\tchi_0=\tchi^{(\alpha,
\beta)}\in \Irr(\tW_n \mid \tilde{\cc})$ be the unique $\diamond$-special 
character. Let $\tilde{\Gamma}$ be a left $L$-cell of $\tW_n$ such that 
$\tilde{\Gamma} \subseteq \tilde{\cc}$. By Example~\ref{invD0}(d), we have 
$\langle [\tilde{\Gamma}]_1, \tchi_0 \rangle_{\tW_n}=1$. Hence, using
Proposition~\ref{kottD2}, we already know that 
\[ \langle \tilde{\kott}_{\CC_{l,j}}, [\tilde{\Gamma}]_1\rangle_{\tW_n}= 
\left\{\begin{array}{cl} \displaystyle\binom{c(\alpha,\beta)}{j+l-
d_0(\alpha,\beta)} & \qquad \mbox{if $|\beta|=j$},\\[5mm] 0 & \qquad 
\mbox{otherwise}.\end{array}\right.\]
We will now show that 
\begin{equation*}
|\CC_{l,j} \cap \tilde{\Gamma}| \leq \langle \tilde{\kott}_{\CC_{l,j}}, 
[\tilde{\Gamma}]_1\rangle_{\tW_n}.\tag{$\dagger$}
\end{equation*}
This is seen as follows. If $\CC_{l,j} \cap \tilde{\Gamma}=\varnothing$, 
this is clear. Now assume that $\CC_{l,j}\cap\tilde{\Gamma}\neq\varnothing$.
By Lemma~\ref{invDn1}, the cardinality $|\CC_{l,j} \cap \tilde{\Gamma}|$ does 
not depend on the left $L$-cell $\tilde{\Gamma} \subseteq 
\tilde{\cc}$. Thus, it will be enough to prove ($\dagger$) for one 
particular left $L$-cell in $\tilde{\cc}$. We will choose such a 
left $L$-cell as follows. Consider the two-sided $L$-cell
$\tilde{\cc}'$ of $\tW_{n-r}$. As in (\ref{jind}), let $w_{0,r} \in H_r$ 
be the longest element. Then $\tilde{\cc}'w_{0,r}$ is a two-sided
$L$-cell in $\tW'=\tW_{n-r} \times H_r$ and we have $\tilde{\cc}'w_{0,r}
\subseteq \tilde{\cc}$; see (\ref{jind})(c). Now let $\tilde{\Gamma}'$ be 
a left $L$-cell of $\tW_{n-r}$ which is contained in $\tilde{\cc}$. Then 
$\tilde{\Gamma}'w_{0,r}$ is a left $L$-cell of $\tW'$. Let $\tilde{\Gamma}
\subseteq \tilde{\cc}$ be the left $L$-cell of $\tW_n$ such that
\[ \tilde{\Gamma}'w_{0,r} \subseteq \tilde{\Gamma}.\]
By Example~\ref{invD0}(d) and (\ref{jind})(b), the left $L$-cells
$\tilde{\Gamma}'$, $\tilde{\Gamma}'w_{0,r}$ and $\tilde{\Gamma}$ all contain
the same number of involutions. Hence, all the involutions in 
$\tilde{\Gamma}$ are already contained in $\tilde{\Gamma}'w_{0,r}\subseteq
\tW'$. Consequently, we have
\[ \CC_{l,j} \cap \tilde{\Gamma}=(\CC_{l,j} \cap \tW') \cap 
\tilde{\Gamma}'w_{0,r}.\]
By an argument entirely analogous to that in \cite[Theorem~6.3]{boge} (see 
the paragraph following ($\triangle$) in the proof theoreof), the assumption 
that $\CC_{l,j} \cap \tilde{\Gamma}\neq \varnothing$ now implies that 
$\CC_{l,j} \cap \tW'$ is the conjugacy class containing the element
$\sigma_{l,j'}w_{0,r}$ where $j=j'+k$ and $k=\lfloor r/2 \rfloor$. We 
conclude that
\[ |\CC_{l,j} \cap \tilde{\Gamma}|=|\CC'\cap \tilde{\Gamma}'| \qquad 
\mbox{where $\CC'\subseteq \tW_{n-r}$ is the conjugacy class 
containing $\sigma_{l,j'}$}.\]
Hence, using the equality $j'=j-\lfloor r/2\rfloor$ and (\ref{jind})(b), 
we see that ($\dagger$) holds by induction. Once this is established, it 
actually follows that we must have equality in ($\dagger$). Indeed, 
by Example~\ref{invD0}(d), we have
\[ \sum_{l,j} |\CC_{l,j} \cap \tilde{\Gamma}|\,=\mbox{ (number of 
involutions in $\tilde{\Gamma}$)}=2^{c(\tchi_0)}\]
where the sum runs over all $l,j\geq 0$ such that $l+2j\leq n$. But we obtain
the same result when we sum the binomial coefficients giving the right
hand side of $(\dagger$) over all $l,j$ as above. Hence, all the
inequalities in $(\dagger)$ must be equalities. Thus, Kottwitz' Modified 
Conjecture holds for $\tilde{\cc}$. Then (\ref{tense}) shows that 
Kottwitz' Modified Conjecture also holds for $\tilde{\cc}\tilde{w}_0$. 
By the remarks at the end of (\ref{jind}), these arguments cover all
non-cuspidal two-sided $L$-cells of $\tW_n$. So it remains to show
that Kottwitz' Modified Conjecture holds for the unique cuspidal 
two-sided $L$-cell of $\tW_n$ where $n=d^2$ for some $d \geq 2$. But this
follows from a formal argument based on Lemma~\ref{invDn1}, exactly as 
in the proof of \cite[Theorem~6.3]{boge}.  \qed
\end{abschnitt}

\noindent {\bf Acknowledgements.} I am indebted to R. E. Kottwitz for 
providing me with his personal notes containing further details about the
character-theoretic computations in \cite{kottwitz}. I also wish to thank
G. Pfeiffer for a useful discussion on normalisers of parabolic subgroups.
 


\begin{thebibliography}{131}

\bibitem{boge}
{\sc C. Bonnaf\'e and M. Geck}, Conjugacy classes of involutions and 
Kazhdan--Lusztig cells. Preprint (2012) at {\tt arXiv:1205.4092}.

\bibitem{cass} 
{\sc B. Casselman}, Verifying Kottwitz' conjecture by computer. 
Represent. Theory {\bf 4} (2000), 32--45.

\bibitem{myfrob}
{\sc M. Geck}, Kazhdan--Lusztig cells and the Frobenius-Schur indicator.
J. Algebra (to appear). Preprint (2011) at {\tt arXiv:1110.5672}.

\bibitem{pycox} 
{\sc M. Geck}, PyCox: Computing with (finite) Coxeter groups and 
Iwahori--Hecke algebras. LMS J. of Comput. and Math. (to appear).
Preprint (2012) at {\tt arXiv:1201.5566}.

\bibitem{geja} 
{\sc M. Geck and N. Jacon}, {\it Representations of Hecke algebras 
at roots of unity}, Algebra and Applications {\bf 15}, Springer-Verlag, 
2011. 

\bibitem{gema}
{\sc M. Geck and G. Malle}, Frobenius--Schur indicators of unipotent 
characters and the twisted involution module. Preprint (2012),  see
{\tt arXiv:1204.3590}.

\bibitem{gepf}
{\sc M. Geck and G. Pfeiffer}, \emph{Characters of Finite Coxeter Groups and
  Iwahori-Hecke Algebras.} London Mathematical Society Monographs. New Series,
  21. The Clarendon Press, Oxford University Press, New York, 2000.

\bibitem{he1}
{\sc X. He}, Minimal length elements in some double cosets
of Coxeter groups. Adv. in Math. {\bf 215} (2007), 469--503.

\bibitem{How}
{\sc R. B. Howlett}, Normalizers of parabolic subgroups of reflections
groups. J. London Math. Soc. {\bf 21} (1980), 62--80.

\bibitem{KL} 
{\sc D. Kazhdan and G. Lusztig}, Representations of Coxeter groups and 
Hecke algebras.  Invent. Math. {\bf 53} (1979), 165--184.

\bibitem{kottwitz}
{\sc R. E. Kottwitz}, Involutions in Weyl groups.  Represent. Theory {\bf4}
  (2000), 1--15.

\bibitem{LuDn}
{\sc G. Lusztig}, Unipotent characters of the even orthogonal groups 
over a finite field. Trans. Amer. Math. Soc. {\bf 272} (1982), 733--751. 

\bibitem{LuB}
{\sc G.~Lusztig}, \emph{Characters of reductive groups over a finite field}.
Annals Math.\ Studies {\bf 107}, Princeton University Press, 1984.

\bibitem{cs3}
{\sc G.~Lusztig}, Character sheaves, III. Adv. in Math. {\bf 57} (1985), 
266--315.

\bibitem{cs4}
{\sc G.~Lusztig}, Character sheaves, IV. Adv. in Math. {\bf 59} (1986), 1--63.

\bibitem{lulc} 
{\sc G. Lusztig}, Leading coefficients of character values of Hecke algebras.
{\it Proc. Symp. Pure Math.} {\bf 47}, Amer.~Math.~Soc.,
Providence, RI, 1987, pp. 235--262.

\bibitem{lusztig}
{\sc G.~Lusztig}, \emph{Hecke algebras with unequal parameters}. CRM 
Monograph Series {\bf 18}, Amer. Math. Soc., Providence, RI,  2003.

\bibitem{disc10}
{\sc G. Lusztig}, Character sheaves on disconnected groups X. 
Represent. Theory {\bf 13} (2009), 82--140 (electronic).

\bibitem{Lu12a}
{\sc G. Lusztig}, A bar operator for involutions in a Coxeter group.
   Preprint, see {\tt arXiv:1112.0969}.

\bibitem{Lu12b}
{\sc G. Lusztig}, Asymptotic Hecke algebras and involutions. Preprint,
  see {\tt arXiv:1204.0276v1}.

\bibitem{LV}
{\sc G. Lusztig and D. Vogan}, Hecke algebras and involutions in
Weyl groups. Preprint, see {\tt arXiv:1109.4606}.

\bibitem{marberg} 
{\sc E. Marberg}, How to compute the Frobenius-Schur indicator of a
unipotent character of a finite Coxeter system. Preprint (2011), 
see {\tt arXiv:1202.1311}.

\bibitem{goetz1}
{\sc G. Pfeiffer}, Character tables of Weyl groups in GAP.
Bayreuther Math. Schr. {\bf 47} (1994), 165--222. 

\end{thebibliography}
\end{document}